\definecolor{mygreen}{RGB}{28,172,0} 
\definecolor{mylilas}{RGB}{170,55,241}
\newtheorem{theorem}{\bf Theorem}
\newtheorem{lemma}[theorem]{\bf Lemma}
\theoremstyle{definition}
\newtheorem{remark}{\normalfont \textit{Remark}}
\bgroup\color[RGB]{165,42,42}\place@tag\egroup}
\theoremstyle{assumption}
\newtheorem{assumption}{\bf Assumption}
\newcommand{\expect}{{\rm I\!E}}
\newcommand{\real}{{\rm I\!R}}
\newcommand{\Prob}{{\rm I\!P}}
\newcommand{\ball}{{\rm I\!B}}
\def\D{\mathrm{d}}
\begin{document}

 
\title{Distributed Regularized Primal-Dual Method: Convergence Analysis and Trade-offs  
\author{Masoud Badiei Khuzani, Na Li \\
\small {John A. Paulson School of Engineering and Applied Sciences} \\ 
\small {Harvard University}\\
\{mbadieik,nali\}@seas.harvard.edu.
}}
\date{}
\maketitle
\begin{abstract}
We study deterministic and stochastic primal-dual sub-gradient algorithms for distributed optimization of a separable objective function with global inequality constraints. In both algorithms, the norm of the Lagrangian multipliers are controlled by augmenting the corresponding Lagrangian function with a quadratic regularization term. Specifically, we show that as long as the stepsize of each algorithm satisfies a certain restriction, the norm of the Lagrangian multipliers is upper bounded by an expression that is inversely proportional to the parameter of the regularization. We use this result to compute upper bounds on the sub-gradients of the Lagrangian function. For the deterministic algorithm, we prove a convergence rate for attaining the optimal objective value. In the stochastic optimization case, we similarly prove convergence rates both in the expectation and with a high probability, using the method of bounded martingale difference. For both algorithms, we demonstrate a trade-off between the convergence rate and the decay rate of the constraint violation, in the sense that improving the convergence rate slows the decay rate of the constraint violation and vice versa. We demonstrate the convergence of our proposed algorithms numerically for distributed regression with the hinge and logistic loss functions over different graph structures.
\end{abstract}

\begin{keywords}
\small{\textbf{Primal-Dual Method, Consensus Algorithm, Stochastic Optimization.}}
\end{keywords}

 \setcounter{equation}{0}

\section{Introduction}
Recent advances in networked systems such as sensor networks as well as the increasing need
for solving high dimensional problems more efficiently have stimulated a significant interest in distributed optimization methods. In the distributed optimization approach, each node of a network solves a sub-problem locally based on information it sends and receives from its neighborhood. Distributed optimization has many applications, such as trajectory optimization for formation control of vehicles \cite{jadbabaie2003coordination, stipanovic2004decentralized, fax2004information}, decentralized control of power systems \cite{bakirtzis2003decentralized}, packet routing \cite{stern1977class}, and estimation problems in sensor networks \cite{ogren2004cooperative}. 

In this paper, we propose and analyze a distributed optimization method for the convex optimization problems of the following form
\begin{align}
\label{Eq:emperical_11}
\min_{x\in \mathcal{X}}   f(x)\coloneqq  \dfrac{1}{n}\sum_{i=1}^{n} f_{i}(x),
 \end{align}\normalsize
where $f_{i}(\cdot),i=1,2,\cdots,n$ are convex functions. Further, $\mathcal{X}\subset \real^{d}$ is a non-empty, convex, compact set that is characterized by a set of inequality constraints
\begin{align}
\label{Eq:emperical_22}
\mathcal{X}\coloneqq \{x\in \real^{d}:g_{k}(x)\leq 0,k=1,2,\cdots,m\},
 \end{align}\normalsize
where $g_{k}:\real^{d}\rightarrow \real$ are convex functions for all $k=1,2,\cdots,m$. 

More specifically, we propose distributed deterministic and stochastic primal-dual algorithms for the optimization problem in eqs. \eqref{Eq:emperical_11}-\eqref{Eq:emperical_22}. At each step of the distributed algorithms, the primal variables are projected onto the Euclidean ball centered at the origin that contains the feasible set $\mathcal{X}$, that is $\mathcal{X}\subseteq \ball_{d}(R)\coloneqq\{x\in \real^{d}:\|x\|_{2}\leq R\}$. Since the projection onto the Euclidean ball has a closed form expression, each step of the distributed algorithm is computed efficiently.

\subsection{Contributions}

We prove a convergence rate for the distributed deterministic and stochastic primal-dual algorithm under the Lipschitz continuity assumption on the objective function and the inequality constraints. In particular, we prove convergence rates for achieving the optimal value of the objective function. We also prove two constraint violation bounds for the primal-dual algorithm. In particular, we show that when one of the inequality constraints is binding at the optimal point(s), there is a trade-off in the convergence rate and the constraint violation rate. In particular, improving the convergence rate deteriorates the constraint violation rate and vice versa. Interestingly, we show that such a trade-off does not exist if the constraints are strictly feasible at the optimal point(s). 

The convergence analysis we present relies on the regularization of the Lagrangian multipliers in the Lagrangian function. 
In particular, by augmenting the Lagrangian function with a quadratic regularization term, we establish an upper bound on the norm of the Lagrangian multipliers that is inversely proportional to the parameter of the regularization. By controlling the norm of the Lagrangian multipliers, we in turn control the norm of the sub-gradients of the Lagrangian function. 

We also propose a distributed stochastic primal-dual algorithm to efficiently solve the constrained optimization problems with a large number of constraints (\textit{i.e.} large $m$). In each step of the stochastic algorithm, each agent only needs to compute one sub-gradient of the inequality constraints. In contrast, in the deterministic primal-dual algorithm, the sub-gradients of all the constraints are needed. 

\subsection{Related Works}
Distributed optimization methods dates back to the seminal
work of Bertsekas and Tsitsiklis on parallel computation \cite{bertsekas1989parallel}. More recent developments in distributed optimization are concerned with developing efficient distributed algorithms for constrained optimization problems, \textit{e.g.}, see \cite{koshal2011multiuser,duchi2012dual,lee2013distributed,ram2010distributed}. In \cite{duchi2012dual}, a distributed dual averaging algorithm is proposed, where each agent projects its local variable onto the feasible set $\mathcal{X}$. When the feasible set has more structure, \textit{i.e.}, it can be written as the intersection of finitely many simple convex constraints, a distributed random projection algorithm is studied \cite{lee2013distributed}. Therein, the projection is computed locally by each agent based on the random observations of the local constraint components.
 
 In the case of optimization with coupled linear equality constraints, \textit{i.e.}, when the decision variables of agents must jointly satisfy a set of linear equality constraints, distributed penalty and barrier function methods are studied \cite{li2014decoupling}. Moreover, based on a game theoretic argument, the asymptotic convergence to the optimal solution has been proved. For distributed optimization with a set of global non-linear inequality constraints like this paper, distributed primal-dual methods are studied in \cite{koshal2011multiuser,yuan2011distributed,zhu2012distributed}. A variation of this method is also studied \cite{chang2014distributed}, where each agent has local inequality constraints. However, the proposed methods in \cite{koshal2011multiuser,yuan2011distributed,zhu2012distributed} require a projection of the Lagrangian multipliers onto a simplex at each algorithm iteration, where the simplex itself is compute using a Slater vector. Since computing a Slater vector can be computationally expensive in practice, such distributed primal-dual methods are not suitable when agents have a low computational budget. 
 
\subsection{Organization}
The rest of this paper is organized as follows. In Section \ref{Sec:Problem_Statement}, we present the list of assumptions and define the Lagrangian function. In Section \ref{Section:Distributed Deterministic Primal-Dual Algorithm}, we describe a distributed regularized primal-dual algorithm and prove a convergence rate. We also prove two asymptotic bounds on the constraint violation of the primal-dual solutions. In Section \ref{Section:Distributed Stochastic Primal-Dual Method}, we describe a distributed stochastic primal-dual algorithm and prove convergence rates in expectation and with a high probability. In Section \ref{Sec:Numerical_Simulations}, we present numerical simulations for both deterministic and stochastic algorithms on random and structured graphs. Lastly, in Section \ref{Discussion_and_Conclusion}, we discuss our results and conclude the paper. 

\textbf{Notation}. Throughout the paper, we work with the standard $\ell_{2}$-norm which we denote by $\|\cdot\|$. We define the sub-differential set of a function $f:\real^{d}\rightarrow \real$ as follows
\small \begin{align*}
&\partial f(x)\\
&\coloneqq \left\{\nabla f \in \real^{d}\big |f(y)+\langle \nabla f,y-x \rangle \leq f(x),\forall\ x,y\in \text{dom}(f) \right\}.
\end{align*}\normalsize
Furthermore, we denote the projection of a point $x$ onto the set $\mathcal{X}$ by $\Pi_{\mathcal{X}}(x)\coloneqq \arg\min_{y\in \mathcal{X}}\|x-y\|$. We also use the standard notation $[x]_{+}\coloneqq \max\{0,x\}$ and use the shorthand notation for sets, \textit{e.g.}, $[n]=\{1,2,\cdots,n\}$. For two vectors $x=(x_{1},\cdots,x_{n})$ and $y=(y_{1},\cdots,y_{n})$, $x\preceq y$ means the element-wise inequality $x_{i}\leq y_{i},\forall i\in [n]$. 

We use the standard asymptotic notation for sequences. If $a_{n}$ and $b_{n}$ are positive
sequences, then $a_{n}=\mathcal{O}(b_{n})$ means that $\lim \sup_{n\rightarrow \infty} a_{n}/b_{n}< \infty$, whereas  $a_{n} = \Omega(b_{n})$ means that
$\lim \inf_{n\rightarrow \infty} a_{n}/b_{n} > 0$. Furthermore, $a_{n}=\widetilde{\mathcal{O}}(b_{n})$ implies $a_{n}=\mathcal{O}(b_{n}\text{poly}\log(b_{n}))$. Moreover $a_{n}=o(b_{n})$ means that $\lim_{n\rightarrow \infty}a_{n}/b_{n}=0$ and $a_{n}=\omega(b_{n})$ means that $\lim_{n\rightarrow \infty} a_{n}/b_{n}=\infty$. Lastly, we have $a_{n}=\Theta(b_{n})$ if $a_{n}=\mathcal{O}(b_{n})$ and $a_{n}=\Omega(b_{n})$.

\section{Preliminaries}
\label{Sec:Problem_Statement}
In this section, we formally state the optimization problem as well as the assumptions that we consider in the rest of the paper. 

\subsection{The Lagrangian function}

We consider distributed primal-dual algorithms for solving the optimization problem characterized in eqs. \eqref{Eq:emperical_1}-\eqref{Eq:emperical_2}, which we repeat here
\begin{subequations}
\begin{align}
\label{Eq:emperical_1}
&\min_{x\in \mathcal{X}}   f(x)\coloneqq  \dfrac{1}{n}\sum_{i=1}^{n} f_{i}(x),\\
\label{Eq:emperical_2}
&\mathcal{X}\coloneqq \{x\in \real^{d}:g_{k}(x)\leq 0,k=1,2,\cdots,m\}.
\end{align}
\end{subequations}
We denote the optimal solution of the problem in eqs. \eqref{Eq:emperical_1}-\eqref{Eq:emperical_2} by $x_{\ast}$. Often, when convenient, we will write the inequality constraints $g_{k}(x)\leq 0$,
$k=1,\cdots,m,$ compactly as $g(x)\preceq 0$ with $g(x)\coloneqq (g_{1}(x),\cdots, g_{m}(x))^{T}$. Similarly,
we use $\nabla g(x)$ to denote the matrix $\nabla g(x)\coloneqq
(\nabla g_{1}(x),\cdots,\nabla g_{m}(x))^{T}\in \real^{m\times d}$.

To describe a distributed optimization algorithm for the constraint optimization problem in eqs. \eqref{Eq:emperical_1}-\eqref{Eq:emperical_2}, we define a Lagrangian function for each agent. Specifically, each function $f_{i}(\cdot)$ in eq. \eqref{Eq:emperical_1} is assigned with one agent in a network of $n$ nodes. The regularized Lagrangian function associated with the $i$-th agent is then defined by,
\begin{align}
\label{Eq:Lagrangian_function}
L_{i}(x,\lambda)\coloneqq f_{i}(x)+\langle \lambda,g(x)\rangle  -\dfrac{\eta}{2}\|\lambda\|_{2}^{2},
 \end{align}\normalsize
for all $i=1,2,\cdots,n$, where $\lambda \coloneqq (\lambda_{1},\cdots,\lambda_{m})$ is the vector of the Lagrangian multipliers. 

We also define the sub-gradients of the Lagrangian function as follows
\begin{align}
\label{Eq:sub_gradient_L1}
\nabla_{x} L_{i}(x,\lambda)&\coloneqq \nabla f_{i}(x) +\sum_{k=1}^{m}\lambda_{k}\cdot \nabla g_{k}(x) , \\
\label{Eq:sub_gradient_L2}
\nabla_{\lambda} L_{i}(x,\lambda)&\coloneqq g(x)-\eta \lambda.
 \end{align}\normalsize

Based on the definition of the Lagrangian function $L_{i}(\cdot,\cdot)$ in \eqref{Eq:Lagrangian_function}, we design a distributed algorithm for the following minimax optimization problem
\begin{align}
\label{Eq:min-max-regularized}
\min_{x\in \real^{d}}\max_{\lambda\in \real^{m}_{+}}\dfrac{1}{n} \sum_{i=1}^{n}L_{i}(x,\lambda).
 \end{align}\normalsize

\subsection{Assumptions}
\label{Assumptions}
We make the following assumptions about the feasible set and the underlying functions:
\begin{assumption} \textsc{(Compact Feasible Set)}
\label{Assumption:Compact_Feasible_Set}
The feasible set $\mathcal{X}$ is non-empty, convex, and compact. Furthermore, the feasible set $\mathcal{X}$ is known by each agent. 
\end{assumption}

Let $R\in \real_{+}$ denotes the smallest radius of the $\ell_{2}$-ball centered at the origin that contains the feasible set, \textit{\textit{i.e.}}, $\mathcal{X}\subseteq \ball_{d}(R)\coloneqq \{x\in \real^{d}: \|x\|\leq R\}.$ 

\begin{assumption}\textsc{(Slater Condition)}
There exists a Slater vector $x\in \text{relint}(\mathcal{X})$ such that $g_{k}(x)<0$ for all $k=1,2,\cdots,m$.
\end{assumption}
Under the Slater condition, the primal problem in eq. \eqref{Eq:emperical_1}-\eqref{Eq:emperical_2} and its dual problem have the same optimal objective value, and a dual optimal solution $\lambda_{\ast}$ exists and is finite $\lambda_{\ast}<\infty$, see \cite{nedic2009sub-gradient}. 

The primal-dual pair $(x_{\ast},\lambda_{\ast})\in\mathcal{X}\times \real_{+}$ is a saddle point of the minimax optimization problem in eq. \eqref{Eq:min-max-regularized}, if it satisfies the inequalities  
\begin{align}
\sum_{i=1}^{n} L_{i}(x_{\ast},\lambda)\leq \sum_{i=1}^{n} L_{i}(x_{\ast},\lambda_{\ast})\leq \sum_{i=1}^{n} L_{i}(x,\lambda_{\ast}),
\end{align} 
for all $x\in \mathcal{X}$, and $\lambda\in \real_{+}$. Note that the saddle point $(x_{\ast},\lambda_{\ast})$ is not unique, unless at least one function $f_{i}(\cdot)$ is strictly convex. Therefore, in the following, the primal-dual pair $(x_{\ast},\lambda_{\ast})$ denotes a generic saddle point of the minimax problem \eqref{Eq:min-max-regularized}.

The following assumption is standard in the optimization literature:
\begin{assumption}
\label{Assumption:Lipschitz Functions}
\textsc{(Lipschitz Functions)} We assume that the functions $f_{i}(\cdot)$ and $g_{k}(\cdot)$ are convex on the Euclidean ball $\ball_{d}(R)$, for all $i\in [n]$ and $k\in [m]$. Further, the sub-gradients $\nabla f_{i}(x)\in \partial f_{i}(x)$, and $\nabla g_{k}(x)\in \partial g_{k}(x),\forall k\in [m]$ are bounded
\begin{align*}
\|\nabla f_{i}(x)\|&\leq L,\quad \|\nabla g_{k}(x)\|\leq L, 
\end{align*}
for all $x\in \ball_{d}(R)$, where $L<\infty$ is a constant. 
\end{assumption}

In Assumption \ref{Assumption:Lipschitz Functions}, the Lipschitz continuity conditions on the underlying functions are defined on the Euclidean ball $\ball_{d}(R)$, which is a larger set compared to the feasible set $\mathcal{X}$. This extension is essential since we confine the primal variables to the Euclidean ball $\ball_{d}(R)$ instead of $\mathcal{X}$ to simplify the projection in the primal-dual algorithm (cf. Algorithm \ref{CHalgorithm}).

The communication network between the $n$ agents is represented with a connected graph $G=(V,E)$, where $V=\{1,2,\cdots,n\}$ is the set of nodes of the graph, and $E\subseteq V\times V$ is the set of edges between those nodes. Thus, $(i,j)\in E$ if the node (agent) $i$ communicates with the node (agent) $j$, and vice versa. We assume that the connectivity graph is fixed in the sense that it does not change during the algorithm runtime. 

 Associated with the graph $G=(V,E)$, we consider a weight matrix $W\coloneqq [W]_{ij},(i,j)\in V\times V$ for averaging the information that each node receives from its neighbors. We consider the following assumption regarding $W$: 

\begin{assumption}
\label{Assumption:Weight_matrix}
\textsc{(Doubly Stochastic Weight Matrix)}
The graph $G$ and the weight matrix $W$ satisfy the following conditions:
\begin{itemize}
\item The graph $G$ is connected.

\item The weight matrix $W$ is doubly stochastic,
\begin{align*}
W\times \mathbbm{1}_{n}&=\mathbbm{1}_{n},\\
\mathbbm{1}^{T}_{n}\times W&=\mathbbm{1}^{T}_{n},
\end{align*}
where $\mathbbm{1}_{n}\in \real^{n}$ is the column
vector with all elements equal to one. 
\item The weight matrix $W$ respects the structure of the graph $G=(V,E)$, \textit{i.e.},
\begin{align*}
&W_{ij}>0\quad \text{if}\quad (i,j)\in E\\
&W_{ij}=0\quad \text{if}\quad (i,j)\notin E.
\end{align*}
\end{itemize}
\end{assumption}

For $n\times n$ doubly stochastic matrices, the singular values can be sorted in a non-increasing fashion $\sigma_{1}(W)\geq \sigma_{2}(W)\geq \cdots \geq \sigma_{n}(W)\geq 0$, where $\sigma_{1}(W)=1$ due to Assumption \ref{Assumption:Weight_matrix}. Throughout the paper, we refer to $1-\sigma_{2}(W)$ as the spectral gap of the matrix $W$.

In the following, we review two popular weight matrices $W$ that are proposed in the optimization literature:

\paragraph{Lazy Metropolis Matrix} Motivated by the hitting time of the lazy Markov chains, Olshevsky \cite{olshevsky2014linear} has proposed the \textit{lazy Metropolis} matrix for the weight matrix, \textit{i.e.},
\begin{align}
\label{Eq:LazyasIam}
[W]_{ij}=\begin{cases}
      \dfrac{1}{2\max(d(i)+1,d(j)+1)} & \text{if}\ (i,j)\in E \\
      0, & \text{if}\ (i,j)\not\in E.
    \end{cases}
\end{align}
Here, $d(i)$ and $d(j)$ are degrees of the nodes $i$ and $j$, respectively. To
choose the weights according to eq. \eqref{Eq:LazyasIam}, agents will need to spend an additional round at the beginning
of the algorithm broadcasting their degrees to their neighbors.

It is easy to verify that the lazy Metropolis matrix $W$ is stochastic, symmetric, and diagonally dominant. Further, due to the symmetry, the singular values are simply the absolute value of the eigenvalues. More importantly, the inverse of the spectral gap has an upper bounded proportional to $n^{2}$ \cite{olshevsky2014linear}. Specifically, as shown in \cite{olshevsky2014linear}, regardless of the graph structure $G$, the spectral gap corresponding to the lazy Metropolis weight matrix is given by  
\begin{align}
\label{Eq:Moreimportnantly}
\dfrac{1}{1-\sigma_{2}(W)}\leq 71n^{2}.
\end{align}   

\paragraph{Normalized Graph Laplacian} Another popular choice of the weight matrix is the graph Laplacian \cite{duchi2012dual}. Consider the graph adjacency matrix $A$, where $A_{ij}=0$ if $(i,j)\not = 1$, and $A_{ij}=1$ otherwise. Further, consider the diagonal matrix $D\coloneqq \text{Diag}(d_{1},\cdots,d_{n})$, where $d_{i}\coloneqq \sum_{j=1}^{n}A_{ij}$. The normalized graph Laplacian is defined as
\begin{align*}
\mathcal{L}(G)\coloneqq I-D^{-1/2}A D^{-1/2}.
\end{align*}
Now, let $\delta\coloneqq \max_{i\in V}\sum_{j=1}^{n}A_{ij}$. When the matrix is degree regular, \textit{i.e.}, $d_{i}=d$ for all $i\in [n]$, the following weight matrix $W$ is proposed in \cite{duchi2012dual},
\begin{align*}
W\coloneqq I-\dfrac{d}{d+1}\mathcal{L}.
\end{align*}
Further, for the case of non-degree regular graphs, the following weight matrix is proposed
\begin{align*}
W\coloneqq I-\dfrac{1}{d_{\max}+1}D^{1/2}\mathcal{L}D^{1/2},
\end{align*}
where $d_{\max}\coloneqq \max_{i\in V}d_{i}$.

\section{Distributed Deterministic Primal-Dual Algorithm}
\label{Section:Distributed Deterministic Primal-Dual Algorithm}

In our proposed distributed primal-dual algorithm, the $i$-th agent maintains a local copy of the primal variables $x_{i}(t) \in \real^{d}$ and the Lagrangian multipliers $\lambda_i(t)\in \real^{m}$. Here, $x_{i}(t)$ and $\lambda_{i}(t)$ stands for the estimate of the $i$-th agent of the decision variable $x$ and $\lambda$ after $t$ steps. Therefore, $x_i(t)$ and $\lambda_i(t)$ have the same dimension as the primal variable $x$ and dual variable $\lambda$. The initialization and update rule of $x_i(t)$ and $\lambda_i(t)$ is described in Algorithm \ref{CHalgorithm}.

\begin{algorithm}[t!]
\caption{\footnotesize{\textsc{Distributed Regularized Primal-Dual Method}}}
\label{CHalgorithm}
\begin{algorithmic}[1]
\State \textbf{Initialize}: $x_{i}(0)=0\in \ball_{d}(R)$, $\lambda_{i}(0)=0\in \real_{+}^{m}, \forall i\in V$ and a non-negative, non-increasing step size sequence $\{\alpha(t)\}_{t=0}^{\infty}$.
\For{$t=0,1,2,\cdots$ at the $i$-th node}
\State Update the auxiliary primal and dual variables
\begin{subequations}
\begin{align}
\label{Eq:aux_1}
y_{i}(t)&=x_{i}(t)-\alpha(t)\nabla_{x}L_{i}(x_{i}(t),\lambda_{i}(t)),\\
\label{Eq:aux_2}
\gamma_{i}(t)&=\lambda_{i}(t)+\alpha(t)\nabla_{\lambda}L_{i}(x_{i}(t),\lambda_{i}(t)).
 \end{align}\normalsize
\end{subequations}

\State Run the consensus steps 
\begin{subequations}
\begin{align}
\label{Eq:That_is_the_way}
x_{i}(t+1)&= \Pi_{\ball_{d}(R)}\left(\sum_{j=1}^{n}[W]_{ij}y_{j}(t)\right)\\  \label{Eq:EasyProjection1}
&=\dfrac{R\cdot \left(\sum_{j=1}^{n}[W]_{ij}y_{j}(t)\right)}{\max\{R,\|\sum_{j=1}^{n}[W]_{ij}y_{j}(t)\|\}}, \\
\label{Eq:EasyProjection2}
\lambda_{i}(t+1)&=\Pi_{\real^{m}_{+}}\left(\sum_{j=1}^{n}[W]_{ij}\gamma_{j}(t)\right).
 \end{align}\normalsize
\end{subequations}

\State Compute the weighted average: $\widehat{x}_{i}(t)={\sum_{s=0}^{t+1}\alpha(s)x_{i}(s)\over \sum_{s=0}^{t+1}\alpha(s)}$ for all $i\in V$.

\EndFor
\State \textbf{Output}: $\widehat{x}_{i}(t)$ for all $i\in V$.
\end{algorithmic}
\end{algorithm}

We remark that Algorithm \ref{CHalgorithm} is an example of ``anytime algorithm", meaning that it is stoppable at any time and it returns $\widehat{x}_{i}(t)$ as the solution of the $i$-th agent to the optimization problem in eq. \eqref{Eq:emperical_1}-\eqref{Eq:emperical_2}. Moreover, the solution improves as $t$ increase in the sense that the  cumulative objective function $f(\widehat{x}_{i}(t))$ of the $i$-th agent tends to the optimal objective value $f(x_{\ast})$ for all $i\in [n]$ as $t\rightarrow \infty$.

In Algorithm \ref{CHalgorithm}, the projection onto the Euclidean ball $\ball_{d}(R)$ is essential since without it, the primal variables $x_{i}(t+1)$ in eq. \eqref{Eq:EasyProjection1} can take any value from $\real^{d}$. In such circumstance, the Lipschitz continuity of functions $f_{i}(\cdot)$ and $g_{k}(\cdot)$ in Assumption \ref{Assumption:Lipschitz Functions} must be extended to the entire Euclidean space $\real^{d}$, which is too stringent for many functions. 

However, the projections onto the Euclidean ball $\ball_{d}(R)$ and the non-negative orthant $\real_{+}^{m}$ in eqs. \eqref{Eq:EasyProjection1}-\eqref{Eq:EasyProjection2}, respectively, have closed form solutions. Therefore, each iteration of Algorithm \ref{CHalgorithm} can be computed efficiently. Notice that since the Euclidean ball $\ball_{d}(R)$ contains the feasible set $\mathcal{X}$, the inequality constraints in eq. \eqref{Eq:emperical_2} can be violated. To provide a guarantee on the asymptotic feasibility of solutions of Algorithm \ref{CHalgorithm}, we establish an upper bound on the constraint violation and prove that it goes to zero as the number of steps goes to infinity $t\rightarrow \infty$ (cf. Theorem \ref{Thm:3}).

\begin{remark}
\label{Remark}
To compute a concise convergence rate, in Algorithm \ref{CHalgorithm} we use the special initialization $x_{i}(0)=0\in \ball_{d}(R)$, $\lambda_{i}(0)=0\in \real_{+}^{m}$. Without this restriction, the convergence analysis of Algorithm \ref{CHalgorithm} is valid, but the convergence rates differ from what we present in this paper. In practice, Algorithm \ref{CHalgorithm} can be initialized from any feasible point in the Euclidean ball $\ball^{d}_{2}(r)\times \real_{+}$ as we demonstrate in the numerical simulations (cf. Section \ref{Sec:Numerical_Simulations}). 
\end{remark}

\subsection{Comparison with Related Primal-Dual Methods}

Augmented Lagrangian methods for constrained optimization have been studied extensively  \cite{koshal2011multiuser,mahdavi2012trading,mahdavi2012stochastic,yuan2016regularized}. In \cite{mahdavi2012trading}, a regularized online primal-dual method is studied, where it has been shown that it achieves a sub-linear `regret' and satisfies the inequality constraints asymptotically. However, the analysis of \cite{mahdavi2012trading} is not applicable to the multi-agent settings since it does not provide a guarantee for the boundedness of the norm of the Lagrangian multipliers $\|\lambda_{i}(t)\|$. It turns out that bounding this norm is essential for analyzing the `consensus terms' (cf. Lemma \ref{Lemma:Consensus}). 

To ensure the boundedness of the norm of the Lagrangian multipliers in the multiagent settings, a distributed regularized primal-dual algorithm similar to Algorithm \ref{CHalgorithm} is proposed in \cite{yuan2016regularized}. However, the optimization problem only includes one constraint $g(x)\leq 0$ (\textit{i.e.}, $m=1$) under the additional assumption that $\min_{x:g(x)=0} \|\nabla g(x)\|_{2}\geq \rho, \nabla g(x)\in \partial g(x)$, for some $\rho>0$. Moreover, the analysis of the convergence rate in \cite{yuan2016regularized} depends on $\rho$. Specifically, the difference in function value at the final estimate and the optimal value is upper bounded by an expression which is proportional to $1/\rho$. Therefore, when $\rho$ is small, the upper bound is potentially very loose. More importantly, the convergence rate of \cite{yuan2016regularized} has a network scaling of $\mathcal{O}(n^{3})$ compared to $\mathcal{O}(\log^{3\over 2}(n))$ that we prove in this paper (cf. Theorem \ref{Thm:3}). 

To bound the norm of the Lagrangian multipliers in distributed primal-dual methods, a different strategy is pursed in \cite{yuan2011distributed,zhu2012distributed, chang2014distributed,koshal2011multiuser}. Specifically, consider a Slater vector $\tilde{x}\in \text{relin}(\mathcal{X}$), \textit{i.e.}, the vector that satisfies
\begin{align}
\label{Eq:Assumption}
g(\tilde{x})\prec 0.
\end{align}
Let $\mu \coloneqq \min_{k=1,2,\cdots,m}\{-g_{k}(\tilde{x})\}$ and define
\begin{align*}
\mathfrak{F}(\lambda)\coloneqq \inf_{x\in \mathcal{X}} f(x)+\langle\lambda,g(x)\rangle.
\end{align*}
In the proposed primal-dual algorithms in \cite{yuan2011distributed,zhu2012distributed, chang2014distributed,koshal2011multiuser}, each agent projects its local Lagrangian multipliers $\lambda_{i}(t)$ onto the following simplex 
\begin{align}
\Lambda	\coloneqq \{\lambda\in \real_{+}^{m}:{\|\lambda\|_{1}\leq \mu^{-1}\cdot(f(\tilde{x})-\mathfrak{F}(\hat{\lambda}))}\},
\end{align}
where $\hat{\lambda}\in \real_{+}^{m}$ is an arbitrary vector. However, there are two drawbacks with the projection onto the simplex:

First, to compute the simplex $\Lambda$, a Slater vector $\tilde{x}$ must be computed which is inefficient.\footnote{To guarantee a zero duality gap, we also require the Slater condition (or any other constraint qualifications) to  hold. However, computing a Slater vector is not needed in Algorithm \ref{CHalgorithm}.} For instance, to compute a Slater vector $\tilde{x}$ for a feasible set defined by linear inequality constraints $\mathcal{X}=\{x\in \real^{d}:\langle a_{k},x\rangle\leq b_{k},k=1,2,\cdots,m\}$ where $a_{k}\in \real^{d},b_{k}\in \real$, we must solve the following optimization problem 
\begin{align}
\label{Eq:Slater_vector}
\tilde{x}=\arg \min_{x\in \mathcal{X}} b-Ax,
\end{align}
where $b\coloneqq (b_{1},\cdots,b_{m})^{T}\in \real^{m\times 1}$ and $A\coloneqq (a_{1};\cdots;a_{m})\in \real^{m\times d}$. Provided that there exists a vector $\tilde{x}$ that satisfies $A\tilde{x}<b$, the solution of the minimization problem \eqref{Eq:Slater_vector} yields a Slater vector.
 
Second, the projection onto the simplex $\Lambda$ requires solving a separate minimization problem. In comparison, the projection in Algorithm \ref{CHalgorithm} is onto the non-negative orthant $\real^{m}_{+}$ which can be computed efficiently by replacing each negative component of the vector $\sum_{j=1}^{n}[W]_{ij}\lambda_{j}(t)$ in Step 4 of Algorithm \ref{CHalgorithm} with zero. 

\subsection{Convergence Rate and Constraint Violation Bounds}
\label{Section:Distributed Regularized Primal-Dual Algorithm}

Here, we prove a convergence rate for Algorithm \ref{CHalgorithm}, and also establish two constraint violation bounds. We defer the proof of the theorems to Appendix \ref{App:Proofs_of_Main_Results}. 

We first establish a general upper bound for the cost function in eq. \eqref{Eq:emperical_1} for an arbitrary choice of the stepsize $\alpha(t)$ and the regularization parameter $\eta$:
\begin{lemma}
\label{Lem:1}
After $T\in \mathbb{N}$ iterations of Algorithm \ref{CHalgorithm}, the estimation $\widehat{x}_{i}(T)$ of the primal variable of each agent $i=1,2,\cdots,n$ satisfies
\footnotesize \begin{align}
\label{Eq:Prop_inequality}
&f(\widehat{x}_{i}(T))-f(x_{\ast})\leq \dfrac{1}{\sum_{t=0}^{T-1}\alpha(t)}\Bigg[\dfrac{1}{2} \|x_{\ast}\|^{2}\\ \nonumber
&\hspace{-2mm}+\dfrac{L}{n} \sum_{t=0}^{T-1}\sum_{j=1}^{n}\alpha(t) \|x_{i}(t)-x_{j}(t)\|-\dfrac{\eta}{2n}\sum_{t=0}^{T-1}\sum_{j=1}^{n}\alpha(t)\|\lambda_{j}(t)\|^{2}\\ \nonumber
&\hspace{-2mm} +\dfrac{1}{2n}\sum_{t=0}^{T-1}\sum_{j=1}^{n}\alpha^{2}(t)\left(\|\nabla_{x}L_{j}(x_{j}(t),\lambda_{j}(t))\|^{2}+\|\nabla_{\lambda}L_{j}(x_{j}(t),\lambda_{j}(t))\|^{2}\right)\Bigg].
 \end{align}\normalsize
\end{lemma}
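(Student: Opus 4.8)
The plan is to track a single potential-type inequality of the form $\|x_i(t+1)-x_\ast\|^2 \le (\text{stuff})$ and telescope it, which is the standard workhorse for projected (sub)gradient saddle-point methods. Concretely, I would fix an arbitrary agent $i$ and examine the consensus update $x_i(t+1)=\Pi_{\ball_d(R)}(\sum_j [W]_{ij} y_j(t))$. Using non-expansiveness of the Euclidean projection and the fact that $x_\ast\in\mathcal X\subseteq\ball_d(R)$, I would write $\|x_i(t+1)-x_\ast\|^2 \le \|\sum_j [W]_{ij} y_j(t) - x_\ast\|^2$. Then, since $W$ is doubly stochastic so that $\sum_j[W]_{ij}=1$, I would invoke convexity of $\|\cdot - x_\ast\|^2$ to pull the sum out: $\|\sum_j[W]_{ij}y_j(t)-x_\ast\|^2 \le \sum_j [W]_{ij}\|y_j(t)-x_\ast\|^2$. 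This is the key place where the weight matrix structure enters.

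Next I would expand each $\|y_j(t)-x_\ast\|^2$ using $y_j(t)=x_j(t)-\alpha(t)\nabla_x L_j(x_j(t),\lambda_j(t))$, giving the three familiar terms $\|x_j(t)-x_\ast\|^2 - 2\alpha(t)\langle \nabla_x L_j(x_j(t),\lambda_j(t)), x_j(t)-x_\ast\rangle + \alpha^2(t)\|\nabla_x L_j(x_j(t),\lambda_j(t))\|^2$. The cross term is where convexity of $L_j(\cdot,\lambda)$ in the primal variable is used: $\langle \nabla_x L_j(x_j(t),\lambda_j(t)), x_j(t)-x_\ast\rangle \ge L_j(x_j(t),\lambda_j(t)) - L_j(x_\ast,\lambda_j(t))$. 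I would perform the entirely analogous computation for the dual variable: from $\lambda_i(t+1)=\Pi_{\real^m_+}(\sum_j[W]_{ij}\gamma_j(t))$, non-expansiveness, double stochasticity, convexity of $\|\cdot-\lambda\|^2$, and then expansion of $\gamma_j(t)=\lambda_j(t)+\alpha(t)\nabla_\lambda L_j(x_j(t),\lambda_j(t))$, using concavity of $L_j(x,\cdot)$ to get $-\langle \nabla_\lambda L_j, \lambda_j(t)-\lambda\rangle \ge -(L_j(x_j(t),\lambda) - L_j(x_j(t),\lambda_j(t)))$ — wait, more carefully, for the ascent direction the inequality reads $\langle \nabla_\lambda L_j(x_j(t),\lambda_j(t)),\lambda - \lambda_j(t)\rangle \le L_j(x_j(t),\lambda) - L_j(x_j(t),\lambda_j(t))$ by concavity. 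Adding the primal and dual inequalities, averaging over $j$, evaluating the saddle-point bound at $\lambda=0$ (so the $-\frac{\eta}{2}\|\lambda\|^2$ regularizer vanishes and $\langle 0,g\rangle=0$), and recognizing $\frac1n\sum_j f_j(x)=f(x)$, I would get a per-step bound relating $\frac1n\sum_j\|x_j(t+1)-x_\ast\|^2 + \frac1n\sum_j\|\lambda_j(t+1)-0\|^2$ to the same quantity at time $t$, minus $2\alpha(t)$ times a gap term involving $\frac1n\sum_j L_j(x_j(t),\lambda_j(t)) - f(x_\ast)$, plus the $\alpha^2$ gradient-norm terms.

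The remaining work is bookkeeping: I would bound $\frac1n\sum_j(f_j(x_j(t)) - f_j(x_\ast))$ from below by $f(\bar x(t))$-type quantities but more directly, I would relate $\frac1n\sum_j L_j(x_j(t),\lambda_j(t))$ to $\frac1n\sum_j[f_j(x_j(t)) + \langle \lambda_j(t),g(x_j(t))\rangle - \frac\eta2\|\lambda_j(t)\|^2]$, and use convexity/Lipschitzness to replace the argument $x_j(t)$ by the fixed $\widehat x_i(T)$ up to consensus error terms $\|x_i(t)-x_j(t)\|$ — this is exactly where the $\frac{L}{n}\sum_t\sum_j\alpha(t)\|x_i(t)-x_j(t)\|$ term is born, via $|f_j(x_j(t)) - f_j(x_i(t))|\le L\|x_i(t)-x_j(t)\|$. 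After summing the telescoped inequality from $t=0$ to $T-1$, using $x_i(0)=0$ and $\lambda_i(0)=0$ to kill the initial $\frac1n\sum_j\|\lambda_j(0)\|^2$ term and turn $\frac1n\sum_j\|x_j(0)-x_\ast\|^2$ into $\|x_\ast\|^2$, dropping the nonnegative terminal terms, dividing by $2\sum_t\alpha(t)$, and finally using Jensen's inequality on the convex function $f$ together with the definition $\widehat x_i(T)=\sum_s\alpha(s)x_i(s)/\sum_s\alpha(s)$ to pass from $\frac{1}{\sum_t\alpha(t)}\sum_t\alpha(t)f(x_i(t))$ down to $f(\widehat x_i(T))$, I would arrive at the stated inequality. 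The main obstacle, and the step requiring the most care, is the handling of the consensus/disagreement terms — making sure the substitution of $x_j(t)$ by $\widehat x_i(T)$ inside the averaged Lagrangian is done consistently for both the $f_j$ part and the $\langle\lambda_j,g\rangle$ part so that only the single factor $\frac{L}{n}$ (and not a larger constant) multiplies $\|x_i(t)-x_j(t)\|$, and ensuring the dual regularization term $-\frac{\eta}{2n}\sum_t\sum_j\alpha(t)\|\lambda_j(t)\|^2$ emerges with the correct sign (it should appear on the right-hand side as a negative contribution, which it does since it comes from $L_j$ evaluated at the iterate $\lambda_j(t)$ with the concavity inequality pointed the helpful way).
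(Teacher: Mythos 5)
Your proposal is correct and follows essentially the same route as the paper's proof: a primal recursion for $\|x_i(t+1)-x_\ast\|^2$ via non-expansiveness of the projection, convexity of the squared norm, and double stochasticity of $W$; a matching dual recursion evaluated at $\lambda=0$ (which is exactly the paper's auxiliary Lemma \ref{Lemma:aux_post_1}, producing the $-\frac{\eta}{2n}\sum\alpha(t)\|\lambda_j(t)\|^2$ term and cancelling the $\langle\lambda_j(t),g(x_j(t))\rangle$ inner products); dropping $\langle\lambda_j(t),g(x_\ast)\rangle\le 0$; telescoping with the zero initializations; the Lipschitz swap $f_j(x_j(t))\to f_j(x_i(t))$ generating the consensus term; and Jensen's inequality on the running average. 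The only cosmetic difference is that you flag a need to treat the $\langle\lambda_j,g\rangle$ part consistently under the agent-index swap, whereas in fact that term is disposed of entirely by the dual recursion before the swap, so only the $f_j$ part requires it — but this does not affect correctness.
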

\begin{proof}
See Appendix \ref{Proof of Proposition 1}.
\end{proof}

To parse the upper bound in Lemma \ref{Lem:1}, we examine each term separately. 

The first term is intuitive as it measures the distance between the initial point (which is chosen to be the origin $x_{i}(0)=0,\forall i\in [n]$ in Algorithm \ref{CHalgorithm}) and an optimal point $x_{\ast}$. 

The second term measures the distance between the primal variables of different agents in the network since it includes the pairwise difference $\|x_{i}(t)-x_{j}(t)\|$. This term, often referred to as the ``consensus" term in the distributed optimization literature, is related to the spectral gap of the weight matrix $W$.  

The third term is due to the regularization term that is included in the Lagrangian function \eqref{Eq:Lagrangian_function}. Although choosing an arbitrary large regularization parameter $\eta$ results in a smaller upper bound, certain trade-offs between the convergence rate of the algorithm and the constraint violation of the inequality constraints \eqref{Eq:empirical_risk_form_1} prohibits a large value for $\eta$ (see Theorem \ref{Thm:3} below). 

The last term in the upper bound \eqref{Eq:Prop_inequality} includes the norms of the sub-gradients defined in eqs. \eqref{Eq:sub_gradient_L1}-\eqref{Eq:sub_gradient_L2}. In the earlier studies of the primal-dual methods, these norms were bounded under different assumptions on the feasible set, see \cite{yuan2011distributed,zhu2012distributed, chang2014distributed}. The challenge is due to the fact that the Lagrangian multipliers $\lambda_{i}(t)$ in the sub-gradients may take a large value. Therefore, to ensure that the vector of Lagrangian multipliers has a bounded norm, various assumptions on the feasible set and the inequality constraints were considered. In our analysis, the norm of the Lagrangian multipliers is controlled by adding the regularization term to the Lagrangian function, see \eqref{Eq:Lagrangian_function}.   
 
 Based on Lemma \ref{Lem:1}, we derive the following explicit convergence rate for Algorithm \ref{CHalgorithm} using a decreasing stepsize:
\begin{theorem} \textsc{(Convergence rate)}
\label{Thm:2}
Consider $T$ iterations of Algorithm \ref{CHalgorithm} with the stepsize $\alpha(t)={R\over \sqrt{t+1}}$ and the regularization parameter $\eta\alpha(t)\leq {1\over 2}$. The estimation of the $i$-th agent $\widehat{x}_{i}(T)$ satisfies
\small \begin{align}
\label{Eq:ConvergenceRate_Rationa0}
f(\widehat{x}_{i}(T))-f(x_{\ast})&\leq \dfrac{RC\log(T)}{\sqrt{T}-1}, \quad T\geq 2,
\end{align}\normalsize
for all $i=1,2,\cdots,n$, where $C$ is defined as follows, 
\small \begin{align}
\label{Eq:CONSTANT_C}
C \coloneqq 1+{5\over 2} mL^{2}R^{2}+20 L^{2} \left(1+\dfrac{nm^{3/2}LR}{\eta}\right)^{2}\left(\dfrac{\log(T\sqrt{nT})}{1-\sigma_{2}(W)}\right)^{3\over 2}.
\end{align}\normalsize
\end{theorem}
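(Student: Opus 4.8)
The plan is to take the general bound of Lemma~\ref{Lem:1}, specialize it to $\alpha(t)=R/\sqrt{t+1}$, and control each of the four terms on its right-hand side; the crux is that doing so requires first establishing the uniform bound on the dual iterates that the whole analysis hinges on. For that step I would write the dual update of Algorithm~\ref{CHalgorithm} as $\gamma_j(t)=(1-\alpha(t)\eta)\lambda_j(t)+\alpha(t)g(x_j(t))$ using \eqref{Eq:aux_2} and \eqref{Eq:sub_gradient_L2}. Since $\Pi_{\real^m_+}$ is $1$-Lipschitz and fixes the origin, $\|\lambda_i(t+1)\|\le\big\|\sum_{j}[W]_{ij}\gamma_j(t)\big\|\le\sum_{j}[W]_{ij}\|\gamma_j(t)\|$ by convexity of the norm and double stochasticity of $W$ (Assumption~\ref{Assumption:Weight_matrix}). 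Combining this with $\eta\alpha(t)\le\tfrac12$ and $\|g(x_j(t))\|\le 2\sqrt m LR$ (from $L$-Lipschitzness on $\ball_d(R)$ in Assumption~\ref{Assumption:Lipschitz Functions} together with $g(x_\ast)\preceq 0$) gives the recursion $M(t+1)\le(1-\alpha(t)\eta)M(t)+2\alpha(t)\sqrt m LR$ for $M(t)\coloneqq\max_j\|\lambda_j(t)\|$, whose fixed point $2\sqrt m LR/\eta$ is, by a one-line induction starting from $M(0)=0$, an upper bound for all $t$. This yields $\|\lambda_j(t)\|=\mathcal{O}(\sqrt m LR/\eta)$, inversely proportional to $\eta$ as advertised.

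Given this, the pointwise terms in \eqref{Eq:Prop_inequality} are routine. I would bound $\tfrac12\|x_\ast\|^2\le\tfrac12R^2$ by Assumption~\ref{Assumption:Compact_Feasible_Set}; bound $\|\nabla_x L_j(x_j(t),\lambda_j(t))\|\le L+L\sqrt m\,\|\lambda_j(t)\|=\mathcal{O}\!\big(L(1+nm^{3/2}LR/\eta)\big)$ via \eqref{Eq:sub_gradient_L1}, Assumption~\ref{Assumption:Lipschitz Functions}, and the multiplier bound (any extra factor of $n$ that ultimately appears in \eqref{Eq:CONSTANT_C} being carried in from the spectral estimate in the consensus step below); and, crucially, absorb the $\eta^2\|\lambda_j(t)\|^2$ part of $\|\nabla_\lambda L_j\|^2=\|g(x_j(t))-\eta\lambda_j(t)\|^2\le 2\|g(x_j(t))\|^2+2\eta^2\|\lambda_j(t)\|^2$ into the negative regularization term: since $\tfrac{1}{2n}\cdot2\eta^2\alpha^2(t)-\tfrac{\eta}{2n}\alpha(t)=\tfrac{\eta\alpha(t)}{2n}\big(2\eta\alpha(t)-1\big)\le0$ whenever $\eta\alpha(t)\le\tfrac12$, those two contributions sum to something non-positive and are dropped, leaving only $2\|g(x_j(t))\|^2\le 8mL^2R^2$. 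For $\alpha(t)=R/\sqrt{t+1}$ one then uses $\sum_{t=0}^{T-1}\alpha(t)\ge R(\sqrt T-1)$ in the denominator and $\sum_{t=0}^{T-1}\alpha^2(t)=R^2\sum_{k=1}^{T}\tfrac1k\le R^2(1+\log T)$ in the numerators; these already account for the $1+\tfrac52 mL^2R^2$ part of $C$ and for the $\log(T)/(\sqrt T-1)$ shape of \eqref{Eq:ConvergenceRate_Rationa0}.

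The remaining and most delicate term is the consensus term $\tfrac{L}{n}\sum_{t}\sum_{j}\alpha(t)\|x_i(t)-x_j(t)\|$, which I would control with the consensus estimate of Lemma~\ref{Lemma:Consensus}. That lemma bounds $\|x_i(t)-x_j(t)\|$ by a convolution $\sum_{s<t}\sigma_2(W)^{\,t-1-s}\alpha(s)\cdot G$ of the mixing factor against the now-uniformly-bounded subgradient magnitude $G=\mathcal{O}\!\big(L(1+nm^{3/2}LR/\eta)\big)$. To extract the announced rate I would split the horizon at the mixing time $\tau\asymp\log(T\sqrt{nT})/(1-\sigma_2(W))$: the part of the convolution older than $\tau$ is damped by a geometric weight of size $\mathcal{O}(1/(T\sqrt{nT}))$ and is negligible, while the recent block contributes $\mathcal{O}\big(\alpha((t-\tau)_+)/(1-\sigma_2(W))\big)$. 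Multiplying by $\alpha(t)$, summing over $t$, and using $\sum_{t\le\tau}\alpha(t)=\mathcal{O}(R\sqrt\tau)$ and $\sum_{t>\tau}\alpha(t)/\sqrt t=\mathcal{O}(R\log T)$ produces a factor of order $GR^2\big(\log(T\sqrt{nT})/(1-\sigma_2(W))\big)^{3/2}$; dividing by $\sum_t\alpha(t)=\Theta(R\sqrt T)$ and collecting constants gives the last summand of $C$ in \eqref{Eq:CONSTANT_C}. The main obstacle is precisely this bookkeeping — matching the exponent $3/2$ on $1/(1-\sigma_2(W))$ and the exact logarithmic argument $T\sqrt{nT}$, while keeping the dependence on $G$ (hence on $\eta$ and $m$) at the stated order; once the consensus lemma and the multiplier bound are in hand, everything else is the elementary estimates above.
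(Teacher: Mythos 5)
Your proposal is correct and follows the paper's route almost step for step: Lemma \ref{Lem:1}, a uniform bound on the multipliers, the sub-gradient bounds of Lemma \ref{Lemma:GradientsBounds}, the consensus estimate of Lemma \ref{Lemma:Consensus} with the mixing-time split at $\tau\asymp\log(T\sqrt{nT})/(1-\sigma_2(W))$, cancellation of the $2\eta^2\alpha^2(t)\|\lambda\|^2$ term against the $-\eta\alpha(t)\|\lambda\|^2$ regularization term under $\eta\alpha(t)\le\tfrac12$, and the harmonic/square-root sums for $\alpha(t)=R/\sqrt{t+1}$. The one genuinely different ingredient is the multiplier bound: the paper squares the dual recursion, applies a Fenchel--Young inequality with a tuned parameter $\delta(t)$ to the sum $\sum_i\|\lambda_i(t)\|^2$, and closes the recursion with the product--sum inequality \eqref{Eq:an_inequality_2}, arriving at $\sum_i\|\lambda_i(t)\|^2\le nmL^2R^2/\eta^2$; you instead run a linear recursion on $M(t)=\max_j\|\lambda_j(t)\|$ and observe that its fixed point $2\sqrt m LR/\eta$ is invariant. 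Your argument is shorter, avoids the auxiliary product inequality, and yields a bound without the $\sqrt n$ factor, so it is strictly compatible with the stated constant $C$; note, however, that the $n$ appearing in $nm^{3/2}LR/\eta$ in \eqref{Eq:CONSTANT_C} originates in the paper from this dual bound (via $\|\lambda\|_1\le\sqrt m\|\lambda\|_2$ applied to the $\ell_2$ bound $\sqrt{nm}LR/\eta$), not from the spectral estimate as you suggest. Two small bookkeeping points: you need the sharper lower bound $\sum_{t=0}^{T-1}R/\sqrt{t+1}\ge 2R(\sqrt T-1)$ (not $R(\sqrt T-1)$) together with $\log T\ge\tfrac14$ for $T\ge2$ to land exactly on the constant $1+\tfrac52 mL^2R^2+\cdots$; and your bound $\|g(x_j(t))\|\le 2\sqrt m LR$ via $g(x_\ast)\preceq0$ only controls $g_k$ from above, not $|g_k|$ — but the paper's own assertion $\|g(x)\|^2\le mL^2R^2$ rests on the same implicit normalization, so this is not a gap relative to the reference proof.
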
 
\begin{proof}
See Appendix \ref{App:TheRestofTheProof}.
\end{proof}

Let us emphasize a few points about Theorem \ref{Thm:2}. 

The constraint $\eta\alpha(t)\leq 1$ on the regularization parameter can be easily satisfied since $\{\alpha(t)\}_{t=0}^{\infty}$ is a decreasing sequence and usually takes a small value. In addition, with a regularization parameter $\small \eta=\Theta(\sqrt{n})$, the scaling of the algorithm is $\mathcal{O}(\log^{3\over 2}(\sqrt{n}))$ which is slightly worse than the dual averaing algorithm by a factor of $\log(\sqrt{n})$ \cite{duchi2012dual}. Moreover, when $\eta=\Theta(\sqrt{m})$, the upper bound in eq. \eqref{Eq:ConvergenceRate_Rationa0} grows linearly in the number of constraints $m$. It is interesting to see whether the linear growth rate can be improved.

In the upper bound \eqref{Eq:ConvergenceRate_Rationa0}, the convergence rate of the algorithm is given by $\widetilde{\mathcal{O}}(T^{-{1\over 2}})$ when $\eta$ is independent of $T$. It is well-known that a lower bound for the regret of the centralized first order methods with non-smooth objective functions has an order of $\Omega(T^{-1/2})$, see \cite{agarwal2009information}. Therefore, when $\eta$ is independent of the number of steps $T$, Algorithm \ref{CHalgorithm} is order optimal up to a polynomial factor of $\log(T)$.
 
As mentioned in Section \ref{Sec:Problem_Statement}, the local variable of each agent $\widehat{x}_{i}(T)$ in Algorithm \ref{CHalgorithm} is computed via the projection of the primal variables onto the Euclidean ball $\ball_{d}(R)$ that contains the feasible set $\mathcal{X}$. Therefore, in principle the inequality constraints can be violated. In the next theorem, we show that the upper bound on the constraint violation is related to the regularization parameter $\eta$.
\begin{theorem}\textsc{(Constraint Violation Bound)}
\label{Thm:3}
Consider $T$ iterations of Algorithm \ref{CHalgorithm} with the stepsize $\alpha(t)={R \over \sqrt{t+1}}$ and the regularization parameter $\eta\alpha(t)\leq {1\over 2},\forall t\in [T]$. Further, 
The constraint violation has the following asymptotic bound for all $i\in V$,
\small \begin{align}
\label{Eq:FirstConstraintViolationBound}
\left\|\left[\dfrac{1}{n}\sum_{i=1}^{n}g(\widehat{x}_{i}(T)) \right]_{+}\right\|^{2}_{2}=\mathcal{O}(\eta),
 \end{align}\normalsize
Furthermore, if the optimal solution $x_{\ast}$ is strictly feasible $g(x_{\ast})\prec 0$ at an optimal point, we have
\small \begin{align}
\label{Eq:SecondConstraintViolationBound}
\left\|\left[\dfrac{1}{n}\sum_{i=1}^{n}g(\widehat{x}_{i}(T)) \right]_{+}\right\|^{2}_{2}=\mathcal{O}\left(\dfrac{\eta \log(T)}{\sqrt{T}}\right).
 \end{align}\normalsize
\end{theorem}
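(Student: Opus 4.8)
Here is a sketch of how I would establish Theorem~\ref{Thm:3}. The overall plan is to obtain, for \emph{every} fixed multiplier $\lambda\in\real^m_+$, a saddle‑point–type inequality of ``regret'' form, and then to optimize over $\lambda$ so that the left‑hand side becomes $\frac{1}{2\eta}\|[\,\cdot\,]_+\|^2$.

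\textbf{Step 1 (a saddle-point inequality with free $\lambda$).} I would rerun the computation behind Lemma~\ref{Lem:1}, but instead of specializing the dual argument to the saddle point $\lambda_\ast$ of \eqref{Eq:min-max-regularized}, I keep a generic $\lambda\in\real^m_+$. Starting from the primal/dual updates, using non‑expansiveness of $\Pi_{\ball_d(R)}$ and $\Pi_{\real^m_+}$, convexity of $\|\cdot\|^2$ together with double stochasticity of $W$, and convexity of $L_j(\cdot,\lambda)$ / concavity of $L_j(x,\cdot)$, one telescopes the distance terms $\|x_i(t)-x_\ast\|^2$ and $\|\lambda_i(t)-\lambda\|^2$ (which vanish at $t=0$ by the initialization) to obtain, after dividing by $S_T:=\sum_{t=0}^{T-1}\alpha(t)$,
\[ f(\widehat x_i(T))-f(x_\ast)+\Big\langle \lambda,\tfrac1n\textstyle\sum_{j=1}^n g(\widehat x_j(T))\Big\rangle-\tfrac{\eta}{2}\|\lambda\|^2 \ \le\ \mathcal E_T(\lambda), \]
where I use $g(x_\ast)\preceq 0$, $\lambda_j(t)\succeq 0$ and non‑positivity of the regularization term to discard $\langle\lambda_j(t),g(x_\ast)\rangle-\tfrac{\eta}{2}\|\lambda_j(t)\|^2$, convexity of each $f_j,g_k$ with Jensen (for the $\alpha(t)$‑weighted average) to pass from the running iterates to $\widehat x_i(T),\widehat x_j(T)$, and Assumption~\ref{Assumption:Lipschitz Functions} to replace $f_j(x_j(t))$ by $f_j(x_i(t))$ up to the consensus error $L\|x_i(t)-x_j(t)\|$. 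The remainder $\mathcal E_T(\lambda)$ equals $\tfrac1{2S_T}\|x_\ast\|^2+\tfrac1{2S_T}\|\lambda\|^2$ plus the consensus term and the squared‑sub‑gradient term already appearing in Lemma~\ref{Lem:1}; with $\alpha(t)=R/\sqrt{t+1}$ and $\eta\alpha(t)\le\tfrac12$, the estimates from the proof of Theorem~\ref{Thm:2} (Lemma~\ref{Lemma:Consensus} together with the sub‑gradient norm bounds) give $\mathcal E_T(\lambda)\le\tfrac1{2S_T}\|\lambda\|^2+\mathcal O\!\big(RC\log T/(\sqrt T-1)\big)=\tfrac1{2S_T}\|\lambda\|^2+\mathcal O(\log T/\sqrt T)$.

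\textbf{Step 2 (optimizing $\lambda$, then the two regimes).} Write $v:=\tfrac1n\sum_{i=1}^n g(\widehat x_i(T))$ and plug $\lambda=\tfrac1\eta[v]_+\in\real^m_+$ into the inequality of Step~1; since $\max_{\lambda\succeq 0}\big(\langle\lambda,v\rangle-\tfrac\eta2\|\lambda\|^2\big)=\tfrac1{2\eta}\|[v]_+\|^2$ is attained there, this yields
\[ \tfrac1{2\eta}\Big(1-\tfrac1{\eta S_T}\Big)\|[v]_+\|^2 \ \le\ \big(f(x_\ast)-f(\widehat x_i(T))\big)+\mathcal O\!\big(\log T/\sqrt T\big). \]
Because $S_T=\Theta(R\sqrt T)\to\infty$, the prefactor is eventually at least $\tfrac12$, so asymptotically $\|[v]_+\|^2\le 4\eta\big[(f(x_\ast)-f(\widehat x_i(T)))+\mathcal O(\log T/\sqrt T)\big]$. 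For the first bound, $x_\ast,\widehat x_i(T)\in\ball_d(R)$ and the $L$‑Lipschitz continuity of $f$ on $\ball_d(R)$ give $f(x_\ast)-f(\widehat x_i(T))\le L\|x_\ast-\widehat x_i(T)\|\le 2LR$, a constant; hence $\|[v]_+\|^2=\mathcal O(\eta)$. For the second bound, strict feasibility $g(x_\ast)\prec 0$ forces, via strong duality (Slater) and complementary slackness, the unregularized dual optimal multiplier to be $0$, so $f(x_\ast)=\inf_x f(x)\le f(\widehat x_i(T))$, i.e.\ $f(x_\ast)-f(\widehat x_i(T))\le 0$; the $\mathcal O(\log T/\sqrt T)$ term then dominates and $\|[v]_+\|^2=\mathcal O(\eta\log T/\sqrt T)$.

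\textbf{Main obstacle.} The delicate part is Step~1: re‑deriving the saddle‑point inequality with a \emph{free} $\lambda$ while still exploiting the regularization to retain the $-\tfrac\eta2\|\lambda\|^2$ term — this is exactly what makes the subsequent maximization produce $\|[v]_+\|^2/(2\eta)$ rather than an unbounded quantity — and routing the consensus error so that the right‑hand side reduces to the quantity already controlled in Theorem~\ref{Thm:2}. A secondary subtlety is the complementary‑slackness step, which needs strict feasibility \emph{componentwise} to annihilate every coordinate of the dual optimum, and the fact that the $1/S_T$ correction terms are only asymptotically negligible, consistent with the asymptotic ($\mathcal O(\cdot)$) form of the statement.
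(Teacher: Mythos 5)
Your proposal follows essentially the same route as the paper's proof: combine the primal and dual telescoping recursions with a free $\lambda\in\real^m_+$, maximize the resulting $\langle\lambda,v\rangle-(\tfrac{\eta}{2}+\tfrac{1}{2S_T})\|\lambda\|^2$ term to produce $\|[v]_+\|^2/(2\eta+2/S_T)$, bound the residual function-value term by a constant in general, and under strict feasibility use complementary slackness (the paper's Lemma~\ref{Lemma:03}) plus the consensus bound to show that term is $\mathcal{O}(\log T/\sqrt{T})$. The only cosmetic differences are that you substitute the specific maximizer $\lambda=\eta^{-1}[v]_+$ instead of maximizing exactly (harmless since $\eta S_T\to\infty$) and you fold the consensus error into the saddle-point inequality up front rather than handling it when bounding $\mathcal{F}_+(T)$.
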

\begin{proof}
The proof is deferred to Appendix \ref{Proof of Theorem 3}. 
\end{proof}

From Theorems \ref{Thm:2} and \ref{Thm:3}, we observe that when one of the constraints is binding at the optimal solution, \textit{i.e.}, $g_{k}(x_{*})=0$ for at least one coordinate $k\in [m]$, there is a tension between the convergence rate in eqs. \eqref{Eq:ConvergenceRate_Rationa0}-\eqref{Eq:CONSTANT_C} and the decay rate of the constraint violation bound in eq. \eqref{Eq:FirstConstraintViolationBound}. Clearly, by adopting a $\eta$, we obtain a small constraint violation bound. However, a small $\eta$ yields a large upper bound in eqs. \eqref{Eq:ConvergenceRate_Rationa0}-\eqref{Eq:CONSTANT_C}. To examine this trade-off more precisely, suppose $\eta=\Theta(T^{-r})$ for $r\in (0,1/2)$. In this case, the convergence rate as characterized in eqs. \eqref{Eq:ConvergenceRate_Rationa0}-\eqref{Eq:CONSTANT_C} is $\widetilde{\mathcal{O}}(1/T^{{1\over 2}-r})$, while the constraint violation in eq. \eqref{Eq:FirstConstraintViolationBound} becomes $\mathcal{O}(1/T^{r})$. Interestingly, when the inequality constraints are satisfied strictly at an optimal point, \textit{i.e.}, $g_{k}(x_{\ast})< 0, \forall  k\in [m]$, then the constraint violation bound in eq. \eqref{Eq:SecondConstraintViolationBound} decays to zero as $T\rightarrow \infty$ even for $\eta=\mathcal{O}(1)$. Consequently, there is no trade-off between the convergence rate and the constraint violation when $g_{k}(x_{\ast})<0, \forall  k\in [m]$.

\section{Distributed Stochastic Primal-Dual Method}
\label{Section:Distributed Stochastic Primal-Dual Method}

\begin{algorithm}[t!]
\caption{\footnotesize{\textsc{Distributed Stochastic Primal-Dual Method}}}
\label{CHalgorithm-1}
\begin{algorithmic}[1]
\State \textbf{Initialize}: $x_{i}(0)=0\in \ball_{d}(R)$, $\lambda_{i}(0)=0\in \real_{+}^{m}, \forall i\in V$ and a non-negative, non-increasing stepsize sequence $\{\alpha(t)\}_{t=0}^{\infty}$.
Select $p_{i}(0)={\tt{Uniform}}\{1,2,\cdots,m\}$.
\For{$t=0,1,2,\cdots$ at the $i$-th node $i\in V$}
\State Draw a random index $K_{i}(t)\in\{1,2,\cdots,m\}$ according to the distribution $K_{i}(t)\sim p_{i}(t)$.
\State Update the primal and dual variables
\begin{align*}
y_{i}(t)&=x_{i}(t)-\alpha(t)\nabla_{x}\widehat{L}_{i}(x_{i}(t),\lambda_{i}(t);K_{i}(t))\\
\gamma_{i}(t)&=\lambda_{i}(t)+\alpha(t)\nabla_{\lambda}\widehat{L}_{i}(x_{i}(t),\lambda_{i}(t)).
\end{align*}
\State Run the consensus step
\begin{align*}
x_{i}(t+1)&=  \Pi_{\ball_{d}(R)}\left(\sum_{j=1}^{n}[W]_{ij}y_{j}(t)\right)\\
&=\dfrac{R\cdot \left(\sum_{j=1}^{n}[W]_{ij}y_{j}(t)\right)}{\max\{R,\|\sum_{j=1}^{n}[W]_{ij}y_{j}(t)\|_{2}\}}, \\
\lambda_{i}(t+1)&=\Pi_{\real_{+}^{m}}\left(\sum_{j=1}^{n}[W]_{ij}\gamma_{i}(t)\right).
\end{align*}
\State Update $p_{i}(t)={1\over \|\lambda_{i}(t)\|_{1}}(\lambda_{1}(t),\cdots,\lambda_{m}(t)),\forall i\in V$. Set $p_{i}(t)={\tt{Uniform}}\{1,\cdots,m\}$ if $\lambda_{i}(t)=0$.  

\State Compute the weighted average: $\widehat{x}_{i}(t)={\sum_{s=0}^{t+1}\alpha(s)x_{i}(s)\over \sum_{s=0}^{t+1}\alpha(s)}$ for all $i\in V$.
\EndFor
\State \textbf{Output}: $\widehat{x}_{i}(t)$ for all $i\in V$.
\end{algorithmic}
\end{algorithm}

As mentioned earlier in the previous section, the projection onto the Euclidean ball $\ball_{d}(R)$ in eq. \eqref{Eq:EasyProjection1} of Algorithm \ref{CHalgorithm} has a closed form expression, and thus it can be computed efficiently. However, the algorithm may still be computationally inefficient, especially when there is a large number of constraints. This is due to the fact that the sub-gradients of all the constraints must be calculated in eq \eqref{Eq:sub_gradient_L1}. 

To resolve this issue, in this section we propose a distributed stochastic primal-dual algorithm. In contrast to Algorithm \ref{CHalgorithm} which requires the sub-gradients of all inequality constraints at each step, the stochastic algorithm only requires one sub-gradient, namely the sub-gradient associated with the constraint that has the largest Lagrangian multiplier.   

More precisely, at each step $t=0,1,2,\cdots$ of the stochastic algorithm, we prescribe a distribution  $p_{i}(t)\coloneqq (p_{i,1}(t),p_{i,2}(t),\cdots,p_{i,m}(t)),\sum_{k=1}^{m}p_{i,k}(t)=1$ for each agent on the set of labels $\{1,2,\cdots,m\}$ associated with the inequality constraints \eqref{Eq:emperical_2}. The distribution $p_{i}(t)$ of each agent is determined based on the observed Lagrangian multipliers at time $t$, \textit{i.e.},
\begin{align*}
p_{i,k}(t)\coloneqq \lambda_{i,k}(t)/\|\lambda_{i}(t)\|_{1}, \quad \|\lambda_{i}(t)\|_{1}\not=0.
\end{align*}
When the Lagrangian multipliers are all zero $\lambda_{i}(t)=0\in \real_{+}^{m}$, we consider a uniform distribution, \textit{i.e.}, $p_{i}(t)={\tt{Uniform}}\{1,\cdots,m\}$. Let $K_{i}(t)$ denotes a random variable with the distribution $p_{i,k}(t)$, that is $p_{i,k}(t)=\Prob[K_{i}(t)=k]$. For each given index $k\in \{1,2,\cdots,m\}$ and for the pair of variables $(x,\lambda)\in \ball_{d}(R)\times \real_{+}^{m}$, we also let
\begin{subequations}
\begin{align}
\label{Eq:Label1}
\nabla_{x}\widehat{L}_{i}(x,\lambda;k)&\coloneqq  \nabla f_{i}(x)+\|\lambda\|_{1}\nabla g_{k}(x)\\
\label{Eq:Label2}
\nabla_{\lambda}\widehat{L}_{i}(x,\lambda)&	\coloneqq g(x)-\eta \lambda.
\end{align}
\end{subequations}

Equipped with these definitions, in Algorithm \ref{CHalgorithm-1} we present the distributed stochastic primal-dual algorithm. Let $\mathfrak{F}_{t}$ denotes the $\sigma$-algebra of all random variables $\{(x_{i}(s),\lambda_{i}(s),K_{i}(s))\}_{s=0}^{t-1}$. Conditioned on $\mathfrak{F}_{t}$, the stochastic sub-gradient $\nabla_{x}\widehat{L}_{i}(x_{i}(t),\lambda_{i}(t);K_{i}(t))$ defined in eq. \eqref{Eq:Label1} is an \textit{unbiased} estimate of the deterministic sub-gradient  $\nabla_{x}L_{i}(x_{i}(t),\lambda_{i}(t))$ defined in eq. \eqref{Eq:sub_gradient_L1}. In particular, by computing the expectation of the estimator $\nabla_{x}\widehat{L}_{i}(x_{i}(t),\lambda_{i}(t);K_{i}(t))$ with respect to the distribution $p_{i}(t)$, we obtain the deterministic sub-gradient
\begin{align}
\nonumber
&\expect_{p_{i}(t)}[\nabla_{x}\widehat{L}_{i}(x_{i}(t),\lambda_{i}(t);K_{i}(t))|\mathfrak{F}_{t}]\\ \nonumber
&=\sum_{k=1}^{m}\left(\nabla f_{i}(x_{i}(t))+\|\lambda_{i}(t)\|_{1}\nabla g_{k}(x_{i}(t))\right)p_{i,k}(t)\\ \nonumber
&=\nabla f_{i}(x_{i}(t))+ \sum_{k=1}^{m}\|\lambda_{i}(t)\|_{1}\nabla g_{k}(x_{i}(t))\dfrac{\lambda_{i,k}(t)}{\|\lambda_{i}(t)\|_{1}}\\ \nonumber
&=\nabla f_{i}(x_{i}(t))+ \sum_{k=1}^{m}\lambda_{i,k}(t)\nabla g_{k}(x_{i}(t)))\\ \label{Eq:Unbiased_Estimator}
&=\nabla_{x}L_{i}(x_{i}(t),\lambda_{i}(t)).
\end{align}
In addition, the stochastic sub-gradient $\nabla_{\lambda}\widehat{L}_{i}(x_{i}(t),\lambda_{i}(t))$ corresponds to the deterministic definition in eq. \eqref{Eq:sub_gradient_L2}, \textit{i.e.},  
\begin{align}
\label{Eq:Unbiased_Estimator01}
\nabla_{\lambda}\widehat{L}_{i}(x_{i}(t),\lambda_{i}(t))=\nabla_{\lambda}L_{i}(x_{i}(t),\lambda_{i}(t)).
\end{align}
\subsection{Convergence Rate and Constraint Violation Bounds}

As we demonstrated in eq. \eqref{Eq:Unbiased_Estimator}, the stochastic sub-gradient defined in eq. \eqref{Eq:Label1} is an unbiased estimator for the deterministic sub-gradient. We thus leverage the method of bounded martingale difference to derive a high probability convergence bound for Algorithm \ref{CHalgorithm-1}. 

\begin{theorem}
\label{Thm:High_Probability_Bound}
Consider Algorithm \ref{CHalgorithm-1} with the stepsize $\alpha(t)={R\over \sqrt{t+1}}$ and the regularization parameter $\eta\alpha(t)\leq {1\over 2}$. Let $\widehat{x}_{i}(T)$ denotes the estimate of the $i$-th agent at the end of $T$ iterations. Then,
\begin{itemize}
\item [(i)] With the probability of at least $1-{1\over T}$, 
\begin{align}
\label{Eq:Stochastic}
f(\widehat{x}_{i}(T))&-f(x_{\ast})\\  \nonumber &\leq  \dfrac{\log(T)}{\sqrt{T}-1}\left(RC+\dfrac{4\sqrt{10}nm^{2}L^{2}R^{3}}{\eta}\right),
\end{align}
for all $i\in V$ and all $T\geq 2$, where $C$ is the constant defined in eq. \eqref{Eq:CONSTANT_C}.  

\item[(ii)] The expected convergence rate is given by
\begin{align}
&\expect[f(\widehat{x}_{i}(T))-f(x_{\ast})]\leq \dfrac{RC\log(T)}{\sqrt{T}-1}.
\end{align}
\end{itemize}

\end{theorem}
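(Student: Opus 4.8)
The plan is to recycle the deterministic analysis behind Theorem~\ref{Thm:2} essentially unchanged and to add to it a single mean-zero martingale term. Write $\xi_{j}(t)$ for the stochastic error $\nabla_{x}\widehat{L}_{j}(x_{j}(t),\lambda_{j}(t);K_{j}(t))-\nabla_{x}L_{j}(x_{j}(t),\lambda_{j}(t))$, so that a step of Algorithm~\ref{CHalgorithm-1} differs from a step of Algorithm~\ref{CHalgorithm} only in that the primal direction of agent $j$ carries the extra vector $\xi_{j}(t)$; the dual directions agree exactly, $\nabla_{\lambda}\widehat{L}_{j}=\nabla_{\lambda}L_{j}$, by \eqref{Eq:Unbiased_Estimator01}. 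The first point to check is that, because the dual recursion is untouched and uses only $\eta\alpha(t)\le\tfrac12$, the pathwise bound on $\|\lambda_{j}(t)\|$ established earlier — obtained from the contraction $\lambda\mapsto(1-\eta\alpha(t))\lambda$, double stochasticity of $W$, and non-expansiveness of $\Pi_{\real^{m}_{+}}$ — carries over to the stochastic iterates verbatim, so that $\|\nabla_{x}\widehat{L}_{j}(x_{j}(t),\lambda_{j}(t);k)\|\le L(1+\|\lambda_{j}(t)\|_{1})$, which is the \emph{same} pathwise bound already used for $\|\nabla_{x}L_{j}\|$. Consequently the telescoping of $\tfrac1n\sum_{j}\|x_{j}(t)-x_{\ast}\|^{2}$, the dual-side telescoping, the consensus estimate of Lemma~\ref{Lemma:Consensus}, and the squared-sub-gradient bounds are all unaffected, and repeating the proof of Theorem~\ref{Thm:2} gives, pathwise,
\begin{align*}
f(\widehat{x}_{i}(T))-f(x_{\ast})&\le \frac{RC\log(T)}{\sqrt{T}-1}+\frac{M_{T}}{\sum_{t=0}^{T-1}\alpha(t)},\\
M_{T}&\coloneqq\sum_{t=0}^{T-1}\frac{\alpha(t)}{n}\sum_{j=1}^{n}\langle \xi_{j}(t),\,x_{\ast}-x_{j}(t)\rangle,
\end{align*}
with $C$ the constant of \eqref{Eq:CONSTANT_C}; here $M_{T}$ simply collects the cross-terms in $\xi_{j}(t)$ produced when $\langle\nabla_{x}\widehat{L}_{j},x_{j}(t)-x_{\ast}\rangle$ is split as $\langle\nabla_{x}L_{j},x_{j}(t)-x_{\ast}\rangle+\langle\xi_{j}(t),x_{j}(t)-x_{\ast}\rangle$ before convexity of $L_{j}(\cdot,\lambda_{j}(t))$ is invoked.

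Part~(ii) is then immediate. Since $x_{j}(t)$ is $\mathfrak{F}_{t}$-measurable and $\expect[\xi_{j}(t)\mid\mathfrak{F}_{t}]=0$ by the unbiasedness identity \eqref{Eq:Unbiased_Estimator}, we have $\expect[M_{T}]=\sum_{t}\tfrac{\alpha(t)}{n}\sum_{j}\langle\expect[\xi_{j}(t)\mid\mathfrak{F}_{t}],\,x_{\ast}-x_{j}(t)\rangle=0$, and taking expectations in the displayed bound yields $\expect[f(\widehat{x}_{i}(T))-f(x_{\ast})]\le RC\log(T)/(\sqrt{T}-1)$.

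For part~(i) I would control $M_{T}$ by the method of bounded martingale differences. The sequence $\{M_{t}\}_{t\ge0}$, $M_{0}=0$, is a martingale with respect to $\{\mathfrak{F}_{t}\}$ since $\expect[M_{t+1}-M_{t}\mid\mathfrak{F}_{t}]=\tfrac{\alpha(t)}{n}\sum_{j}\langle\expect[\xi_{j}(t)\mid\mathfrak{F}_{t}],x_{\ast}-x_{j}(t)\rangle=0$, and its increments are bounded: using $\|x_{\ast}-x_{j}(t)\|\le 2R$ (both points lie in $\ball_{d}(R)$), $\|\xi_{j}(t)\|\le 2\|\lambda_{j}(t)\|_{1}L\le 2\sqrt{m}\,L\,\|\lambda_{j}(t)\|_{2}$, and the pathwise dual bound $\|\lambda_{j}(t)\|_{2}=\mathcal{O}(1/\eta)$, one obtains $|M_{t+1}-M_{t}|\le c_{t}$ with $c_{t}$ of order $\alpha(t)/\eta$ (the remaining factors being polynomial in $n,m,L,R$ and coming from the same pathwise bounds — in particular the bound on $\|g(\cdot)\|$ over $\ball_{d}(R)$ — used in the deterministic analysis). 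Since $\sum_{t=0}^{T-1}\alpha^{2}(t)=R^{2}\sum_{t=1}^{T}t^{-1}\le R^{2}(1+\log T)$ and $\sum_{t=0}^{T-1}\alpha(t)=R\sum_{t=1}^{T}t^{-1/2}\ge 2R(\sqrt{T}-1)$, the one-sided Azuma--Hoeffding inequality gives, with probability at least $1-\tfrac1T$, $M_{T}\le\big(2\log T\sum_{t=0}^{T-1}c_{t}^{2}\big)^{1/2}$; dividing by $\sum_{t}\alpha(t)$ and inserting the explicit $n,m,L,R$ dependence of $c_{t}$ bounds $M_{T}/\sum_{t}\alpha(t)$ by $\tfrac{4\sqrt{10}\,nm^{2}L^{2}R^{3}}{\eta}\cdot\tfrac{\log T}{\sqrt{T}-1}$. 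Adding the deterministic part $\tfrac{RC\log T}{\sqrt{T}-1}$ proves (i).

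The only delicate step is the bookkeeping in the first paragraph: one must verify that \emph{every} pathwise ingredient of Theorem~\ref{Thm:2} — above all the $\mathcal{O}(1/\eta)$ dual bound and the consensus estimate of Lemma~\ref{Lemma:Consensus} — survives the replacement $\nabla_{x}L_{j}\mapsto\nabla_{x}\widehat{L}_{j}$, so that the stochastic analysis splits cleanly as ``deterministic bound $+\ M_{T}$'' with $M_{T}$ a zero-mean martingale of bounded increments. After that, (ii) is a one-line expectation computation and (i) is a single application of Azuma--Hoeffding; the $1/\eta$ dependence of the increment bound $c_{t}$ is exactly what makes the high-probability bound (i) carry the extra $1/\eta$ term that the in-expectation bound (ii) does not.
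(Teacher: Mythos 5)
Your proposal is correct and follows essentially the same route as the paper: the paper likewise splits $\langle\nabla_{x}\widehat{L}_{i},x_{i}(t)-x_{\ast}\rangle$ into the deterministic sub-gradient term plus a zero-mean error term $e_{i}(t)$, reuses the pathwise dual bound, consensus lemma, and sub-gradient bounds verbatim in the stochastic setting, kills the error term by conditional unbiasedness for part~(ii), and bounds it via Cauchy--Schwarz ($\|x_{i}(t)-x_{\ast}\|\le 2R$, $\|e_{i}(t)\|\le 2\sqrt{m}L\|\lambda_{i}(t)\|=\mathcal{O}(1/\eta)$) and Azuma--Hoeffding with $\varepsilon=1/T$ for part~(i). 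The only differences are cosmetic (your $\xi_{j}(t)$ is the negative of the paper's $e_{j}(t)$, and you leave the increment constant $c_{t}$ implicit where the paper computes it explicitly), so there is no gap.
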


From eq. \eqref{Eq:Stochastic}, we observe that $f(\widehat{x}_{j}(T))\rightarrow f(x_{\ast})$ almost surely as $T\rightarrow \infty$. Moreover, by comparing the high probability bound in eq. \eqref{Eq:Stochastic} with the convergence rate of the deterministic algorithm in \eqref{Eq:ConvergenceRate_Rationa0}, we see that both Algorithms \ref{CHalgorithm} and \ref{CHalgorithm-1} yield the same convergence rate of $\mathcal{O}(\log(T)/\sqrt{T})$. This is due to the fact that in both algorithms, the averaging step (Steps 4 of Algorithm \ref{CHalgorithm} and Step 5 of Algorithm \ref{CHalgorithm-1}) is the bottleneck of the convergence rate. 

In the next theorem, we address the constraint violation performance of Algorithm \ref{CHalgorithm-1}. The proof is omitted since it is similar to the proofs of Theorems \ref{Thm:3} and \ref{Thm:High_Probability_Bound}. 

\begin{theorem}
Consider $T$ iterations of Algorithm \ref{CHalgorithm} with the stepsize $\alpha(t)={R \over \sqrt{t+1}}$ and the regularizer's parameter $\eta\alpha(t)\leq {1\over 2},\forall t\in [T]$. With the probability of at least $1-{1\over T}$, the constraint violation has the following asymptotic bound for all $i\in V$,
\small \begin{align}
\left\|\left[\dfrac{1}{n}\sum_{i=1}^{n}g(\widehat{x}_{i}(T)) \right]_{+}\right\|^{2}_{2}=\mathcal{O}(\eta),
 \end{align}\normalsize
Furthermore, if the optimal solution $x_{\ast}$ is strictly feasible at an optimal point $g(x_{\ast})\prec 0$, we have
\small \begin{align}
\left\|\left[\dfrac{1}{n}\sum_{i=1}^{n}g(\widehat{x}_{i}(T)) \right]_{+}\right\|^{2}_{2}=\mathcal{O}\left(\dfrac{\eta \log(T)}{\sqrt{T}}\right).
 \end{align}\normalsize
\end{theorem}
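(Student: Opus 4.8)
The statement is the stochastic analogue of Theorem~\ref{Thm:3}, so the plan is to replay the constraint-violation argument of that theorem on the iterates of Algorithm~\ref{CHalgorithm-1}, substituting the high-probability regret estimate of Theorem~\ref{Thm:High_Probability_Bound} for the deterministic one. The starting point is not Lemma~\ref{Lem:1} as stated but the one-step-telescoped primal--dual inequality behind it, kept at a \emph{generic} dual test vector $\lambda\in\real_{+}^{m}$: running the same computation on $(x_{i}(t),\lambda_{i}(t))$ and using the unbiasedness relations \eqref{Eq:Unbiased_Estimator}--\eqref{Eq:Unbiased_Estimator01}, one gets, for every $i\in V$,
\begin{align}
\label{Eq:Proposal_master}
&\frac{1}{n}\sum_{j=1}^{n}\Big(f_{j}(\widehat{x}_{j}(T))+\langle\lambda,g(\widehat{x}_{j}(T))\rangle\Big)-\frac{\eta}{2}\|\lambda\|_{2}^{2}-f(x_{\ast})\nonumber\\
&\qquad\leq\ \frac{\tfrac12\|x_{\ast}\|_{2}^{2}+\tfrac{1}{2\alpha(0)}\|\lambda\|_{2}^{2}}{\sum_{t=0}^{T-1}\alpha(t)}+\mathcal{R}_{T}+\mathcal{M}_{T},
\end{align}
where $\mathcal{R}_{T}$ collects the consensus contributions (handled by Lemma~\ref{Lemma:Consensus} together with the a priori bound $\|\lambda_{i}(t)\|_{2}=\mathcal{O}(nm^{3/2}LR/\eta)$ already used in Theorem~\ref{Thm:2}) and the step-size-squared contributions (whose $\|\lambda_{j}(t)\|_{2}^{2}$ parts are, exactly as in Lemma~\ref{Lem:1}, absorbed by the negative regularization term), and $\mathcal{M}_{T}$ is the normalized martingale generated by the gap $\nabla_{x}\widehat{L}_{i}(\cdot;K_{i}(t))-\nabla_{x}L_{i}(\cdot)$.

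The device borrowed from Theorem~\ref{Thm:3} is to evaluate \eqref{Eq:Proposal_master} at
\begin{align*}
\lambda^{\sharp}\ \coloneqq\ \frac{1}{\eta}\Bigg[\frac{1}{n}\sum_{i=1}^{n}g(\widehat{x}_{i}(T))\Bigg]_{+}\ \in\ \real_{+}^{m},
\end{align*}
which is the maximizer over $\real_{+}^{m}$ of $\lambda\mapsto\big\langle\lambda,\tfrac1n\sum_{i}g(\widehat{x}_{i}(T))\big\rangle-\tfrac{\eta}{2}\|\lambda\|_{2}^{2}$. Because $\langle[v]_{+},v\rangle=\|[v]_{+}\|_{2}^{2}$, the left side of \eqref{Eq:Proposal_master} turns into $\tfrac1n\sum_{j}f_{j}(\widehat{x}_{j}(T))+\tfrac{1}{2\eta}\big\|[\tfrac1n\sum_{i}g(\widehat{x}_{i}(T))]_{+}\big\|_{2}^{2}-f(x_{\ast})$, while on the right the term $\tfrac{1}{2\alpha(0)}\|\lambda^{\sharp}\|_{2}^{2}=\tfrac{1}{2R\eta^{2}}\|[\,\cdot\,]_{+}\|_{2}^{2}$ is dominated by $\tfrac{1}{2\eta}\|[\,\cdot\,]_{+}\|_{2}^{2}$ once $\eta R\sum_{t}\alpha(t)>1$, so it can be moved to the left at the price of halving the coefficient. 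Using Lipschitz continuity on $\ball_{d}(R)$ to bound the leftover objective gap, $f(x_{\ast})-\tfrac1n\sum_{j}f_{j}(\widehat{x}_{j}(T))\leq 2LR+\tfrac{L}{n}\sum_{j}\|\widehat{x}_{j}(T)-\widehat{x}_{1}(T)\|$, one is left with
\begin{align*}
\frac{1}{4\eta}\Big\|\Big[\tfrac1n\sum_{i}g(\widehat{x}_{i}(T))\Big]_{+}\Big\|_{2}^{2}\ \leq\ 2LR+\mathcal{R}_{T}+\mathcal{M}_{T}+\widetilde{\mathcal{O}}(T^{-1/2}),
\end{align*}
and since the right side is $2LR+o(1)$ with high probability, the first claim $\|[\tfrac1n\sum_{i}g(\widehat{x}_{i}(T))]_{+}\|_{2}^{2}=\mathcal{O}(\eta)$ follows.

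It remains to pin down the high-probability bound on $\mathcal{M}_{T}$, and this is precisely the martingale-difference estimate already carried out in Theorem~\ref{Thm:High_Probability_Bound}: each increment is at most a multiple of $\alpha(t)RL\|\lambda_{i}(t)\|_{1}\leq\alpha(t)RL\sqrt{m}\,\|\lambda_{i}(t)\|_{2}$, so feeding in $\|\lambda_{i}(t)\|_{2}=\mathcal{O}(nm^{3/2}LR/\eta)$ and $\sum_{t}\alpha^{2}(t)=\mathcal{O}(R^{2}\log T)$, Azuma--Hoeffding gives $|\mathcal{M}_{T}|=\mathcal{O}\!\big(nm^{2}L^{2}R^{2}\log T/(\eta\sqrt{T})\big)$ with probability at least $1-1/T$, on which event $\mathcal{R}_{T}=\widetilde{\mathcal{O}}(T^{-1/2})$ as well. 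For the strictly feasible case one mimics the second half of the proof of Theorem~\ref{Thm:3}: $g(x_{\ast})\prec 0$ makes the regularized dual optimum $\lambda_{\ast}=\tfrac1\eta[g(x_{\ast})]_{+}$ vanish, and writing $\mu\coloneqq\min_{k}(-g_{k}(x_{\ast}))>0$ and $z_{j}\coloneqq\Pi_{\mathcal{X}}(\widehat{x}_{j}(T))$, the inequalities $f(z_{j})\geq f(x_{\ast})$ and a Slater-type error bound $\mathrm{dist}(x,\mathcal{X})=\mathcal{O}(\|[g(x)]_{+}\|_{2})$ on $\ball_{d}(R)$ replace the crude $2LR$ slack above by a quantity of order $\mathcal{R}_{T}+\mathcal{M}_{T}=\widetilde{\mathcal{O}}(T^{-1/2})$; solving the resulting quadratic then yields $\|[\tfrac1n\sum_{i}g(\widehat{x}_{i}(T))]_{+}\|_{2}^{2}=\mathcal{O}(\eta\log T/\sqrt{T})$. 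The main obstacle is this last refinement: because the averaged iterate $\tfrac1n\sum_{i}\widehat{x}_{i}(T)$ need not be feasible, turning strict feasibility of $x_{\ast}$ into a genuinely vanishing lower bound on the objective gap requires combining the vanishing of $\lambda_{\ast}$, the error bound, and the consensus estimate with some care, and one must check that the extra $1/\eta$ factor carried by $\mathcal{M}_{T}$ does not contaminate the single power of $\eta$ in the final bound over the range of $\eta$ of interest.
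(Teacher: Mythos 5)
Your treatment of the first bound is essentially the route the paper intends (it omits the proof, pointing to Theorems \ref{Thm:3} and \ref{Thm:High_Probability_Bound}): keep the dual test vector $\lambda$ generic, maximize the quadratic in $\lambda$ to extract $\left\|[\tfrac1n\sum_i g(\widehat{x}_i(T))]_+\right\|^2$, bound the leftover objective gap crudely by a constant, and control the martingale term $\mathcal{M}_T$ by Azuma--Hoeffding exactly as in Appendix E. Evaluating at $\lambda^{\sharp}=\tfrac1\eta[\cdot]_+$ rather than at the exact maximizer of $\langle\lambda,v\rangle-\|\lambda\|^2(\tfrac{\eta}{2}+\tfrac{1}{2\sum_t\alpha(t)})$ only changes constants, so part (i) goes through.

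For the strictly feasible case, however, you have replaced the paper's key step with a different and weaker one. The paper's argument (Lemma \ref{Lemma:03}) is that $g(x_\ast)\prec 0$ forces $\lambda_{\ast,k}=0$ by complementary slackness, so the KKT stationarity condition collapses to $\langle\xi,x-x_\ast\rangle\geq 0$ for all $\xi\in\partial f(x_\ast)$ and all $x\in\ball_d(R)$; hence $f(x)\geq f(x_\ast)$ on the \emph{entire ball}, not just on $\mathcal{X}$. This makes $\mathcal{F}_+(T)=\mathcal{O}(\log T/\sqrt T)$ follow from the consensus bound alone, with no projection and no error bound. Your route --- project $\widehat{x}_j(T)$ onto $\mathcal{X}$, use $f(z_j)\geq f(x_\ast)$ there, and invoke a Slater-type bound $\mathrm{dist}(x,\mathcal{X})=\mathcal{O}(\|[g(x)]_+\|_2)$ --- (a) relies on an error bound that is nowhere established in the paper and would need a separate proof, and (b) turns the master inequality into $\tfrac{1}{4\eta}s^2\leq c_1 s+c_2$ with $s=\|[\cdot]_+\|_2$ and $c_1=\Theta(1)$, whose solution is $s^2=\mathcal{O}(\eta^2)+\mathcal{O}(\eta c_2)$; for the regime of interest $\eta=\Theta(T^{-r})$, $r<1/2$, the spurious $\mathcal{O}(\eta^2)$ term dominates $\eta\log T/\sqrt T$ and the claimed rate is not recovered. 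Your closing caveat about the $1/\eta$ in $\mathcal{M}_T$ is well taken --- after multiplying through by $2\eta$ that term contributes $\mathcal{O}(\log T/\sqrt T)$ with no power of $\eta$, which already strains the stated $\mathcal{O}(\eta\log T/\sqrt T)$ unless $\eta=\Omega(1)$ --- but the concrete fix for the deterministic part of the argument is Lemma \ref{Lemma:03}, not the projection/error-bound device.
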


\section{Numerical Experiments}
\label{Sec:Numerical_Simulations}

In this section, we report the numerical simulations studying the convergence of the regularized primal-dual method for distributed regression on synthetic data. To demonstrate the performance of Algorithm \ref{CHalgorithm}, we consider two examples of smooth and non-smooth classifiers.
\begin{itemize}
\item \textbf{Smooth case}: we consider a logistic loss function with a norm constraint as well as a set of box constraints
\begin{subequations}
\begin{align}
\label{Eq:SVM}
&\min_{x\in \real^{d}} f(x)\coloneqq \dfrac{1}{n}\sum_{i=1}^{n}\log(1+\exp(b_{i}\langle a_{i},x\rangle) )\\ \nonumber
&\text{subject to} \quad g_{k}(x)=-l-x_{k}\leq 0,  \\ \nonumber
&\hspace{17mm} g_{k+d}(x)=x_{k}-u\leq 0, \quad k=1,\cdots,d, \\  \label{Eq:SVM_cons1}
&\hspace{17mm} \|x\|_{2}\leq 1,
 \end{align}\normalsize
\end{subequations}

where $(a_{i},b_{i})\in \real^{d}\times \{-1,+1\}$. 

\item \textbf{Non-smooth case}: we consider a hinge loss function with a norm constraint as well as a set of box constraints
\begin{subequations}
\begin{align}
\label{Eq:SVM1}
&\min_{x\in\real^{d}} f(x)\coloneqq \dfrac{1}{n}\sum_{i=1}^{n}\left[1-b_{i}\langle a_{i},x\rangle\right]_{+}\\ \nonumber
&\text{subject to} \quad g_{k}(x)=-l-x_{k}\leq 0,  \\  \nonumber
&\hspace{17mm} g_{k+d}(x)=x_{k}-u\leq 0, \quad k=1,\cdots,d\\   \label{Eq:SVM_cons2}
&\hspace{17mm} \|x\|_{2}\leq 1,
 \end{align}\normalsize
\end{subequations}
where $(a_{i},b_{i})\in \real^{d}\times \{-1,+1\}$. 

\end{itemize}

The optimization problems of the type \eqref{Eq:SVM}-\eqref{Eq:SVM_cons1} and \eqref{Eq:SVM1}-\eqref{Eq:SVM_cons2} are common in the context of classification in supervised learning, where $\{(a_{1},b_{1}),\cdots,(a_{n},b_{n})\}$ is the set of $n$ training data such that $a_{i}$ is the feature vector (a.k.a. the explanatory variables in the regression), and $b_{i}$ is its associated label. In the case of the logistic classifier, to make a prediction given a new vector $a$, the classifier outputs $b=\pm 1$ with the probability of $\Prob(b=\pm 1 | a,x)=\dfrac{1}{1+\exp(\pm \langle x,a\rangle )}$. In the case of the hinge loss function, the goal is to obtain a linear classifier of the form $a\mapsto \text{sign}(\langle a,x \rangle)$ for some vector $x\in \real^{d}$.

 In our simulations with the logistic classifier, we generate $a_{i}$ from a uniform distribution on the unit sphere. We then choose a random vector from Gaussian distribution $w\sim \mathsf{N}(0,I_{d\times d})$ and generate the labels $b_{i}\sim {\tt{Bernoulli}} (p)$, where $p=\dfrac{1}{1+\exp(\langle w,a_{i}\rangle) }$. It is straightforward to verify that $L=\max_{i=1,2,\cdots,n}\|a_{i}\|=1$ and $R=1$. Note that the solution of the optimization problem in eq. \eqref{Eq:SVM} approximates $w$ under the restrictions specified in Eqs. \eqref{Eq:SVM_cons1}. 
We consider vectors of the dimension $d=5$ (thus $m=10$) and study three different network sizes, $n\in \{50,100,200\}$ and two different upper/lower limits $l=u=0.1$. To show that Algorithm \ref{CHalgorithm} works for any initialization, instead of using the origin as the initialization point of Algorithm \ref{CHalgorithm}, we generate a random vector $v\in \mathsf{N}(0,I_{d\times d})$ and then choose $x_{i}(0)=v/\|v\|_{2}$. We also use the stepsize $\alpha(t)={R}/{\sqrt{t+1}}$ in all simulations, where here $R=1$.

For a graph $G$ of $n$ nodes, let $\varepsilon_{G}(t;n)$ denotes the maximum relative error of the network, \textit{i.e.}, $\varepsilon_{G}(t;n) \coloneqq \max_{i=1,2,\cdots,n}\left|\dfrac{f(\widehat{x}_{i}(t))-f(x_{\ast})}{f(\widehat{x}_{i}(0))-f(x_{\ast})}\right|$ for every node in the graph $i\in V$. Further, we define $\delta_{G}(t;n)\coloneqq \max_{i=1,2,\cdots,n}\|g(\widehat{x}_{i}(t))\|/\|g(\widehat{x}_{i}(0))\|$ as the maximum constraint violation among all the nodes in the network. In the case of the centeralized the primal-dual method, we similarly use $\varepsilon (t,n)$ and $\delta(t;n)$ to denote the relative error gap and the constraint violation, respectively. In our simulations, we use MATLAB convex programming toolbox ${\tt{CVX}}$ \cite{grant2008cvx} to compute $f(x_{\ast})$.

To investigate the performance of Algorithm \ref{CHalgorithm} on different networks, we consider random and structured graphs in our simulations, namely (a): Watts-Strogatz small-world graph model \cite{watts1998collective}, (b) Erd\"{o}s-R\'{e}yni random graph \cite{bollobas1998random}, (c) unwrapped 8-connected neighbors lattice, (d) two-clique graph (barbell graph).  See Fig. \ref{Fig:1}.

The Watts-Strogatz model is a mathematical model to generate random graphs with small-world properties, \textit{i.e.}, graphs that are highly clustered locally (like regular lattices) and with a small separation globally. Social networks is an example where each person is only five or six people away from anyone else. Watts-Strogatz model has two structural features, namely the clustering and the average path length. These features are captured by two parameters, namely the mean degree $K$, and a parameter $\vartheta$ that interpolates between a lattice $(\vartheta=0)$ and a random graph $(\vartheta=1)$.

\hspace*{-10mm}\begin{figure*}[t!]
     \begin{center}
        \subfigure{
        \includegraphics[trim={2cm 1.5cm 1.5cm .2cm}, width=.2\linewidth]{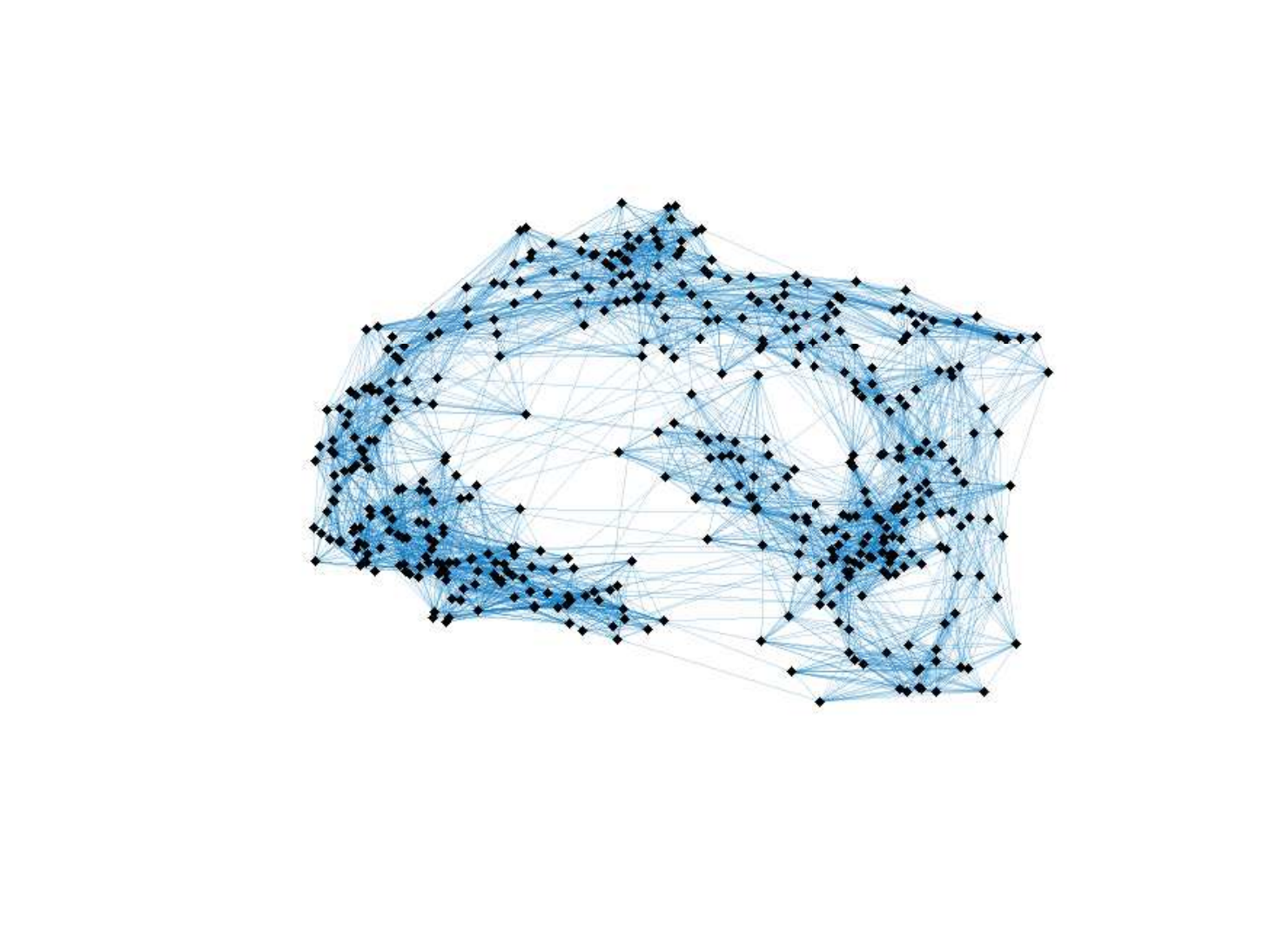}
        \includegraphics[trim={2cm .75cm 1.5cm .2cm}, width=.2\linewidth]{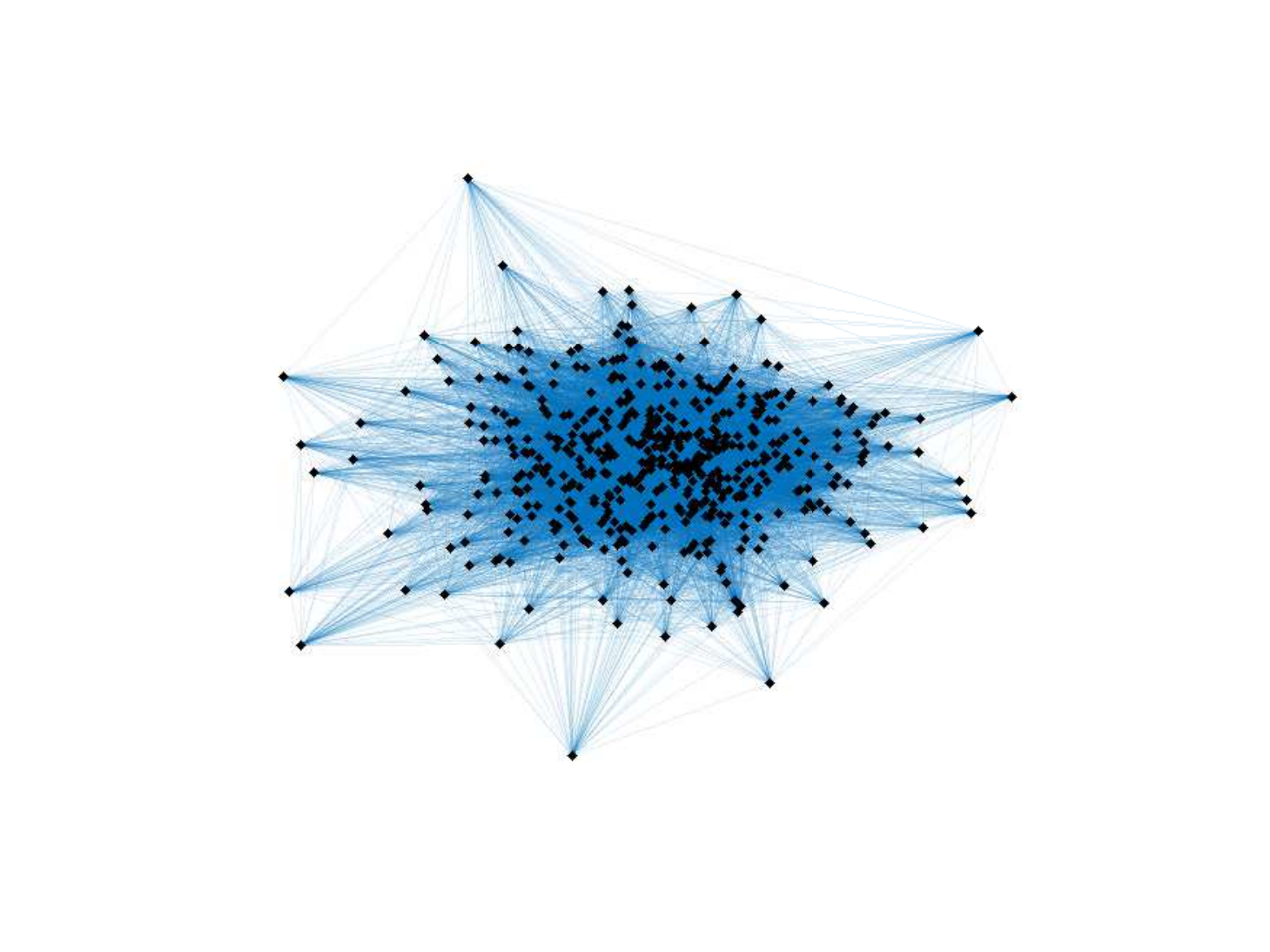}
        \includegraphics[trim={2cm .75cm 1.5cm .2cm}, width=.2\linewidth]{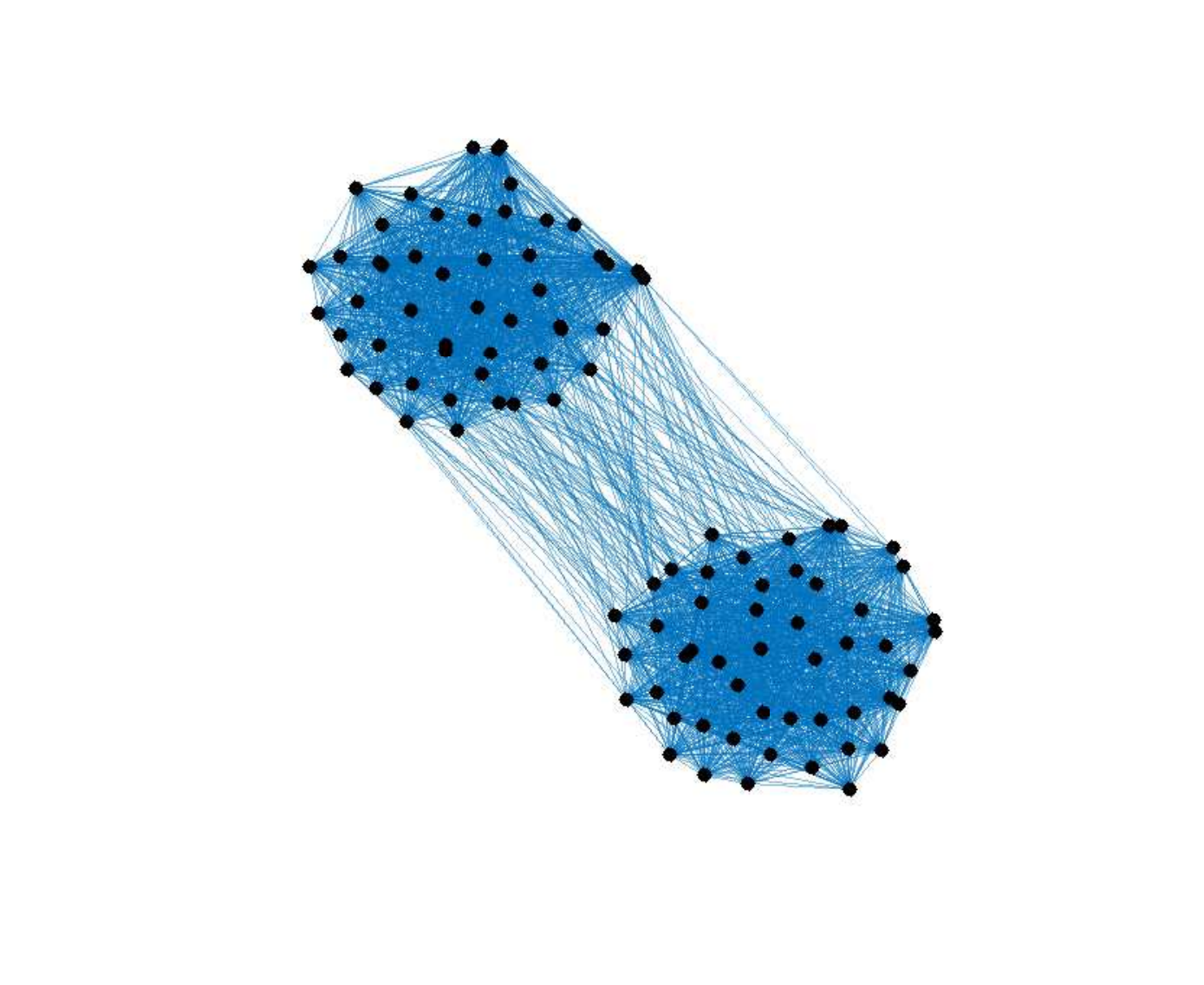} 
        \includegraphics[trim={2cm 1.5cm 2cm .2cm}, width=.2\linewidth]{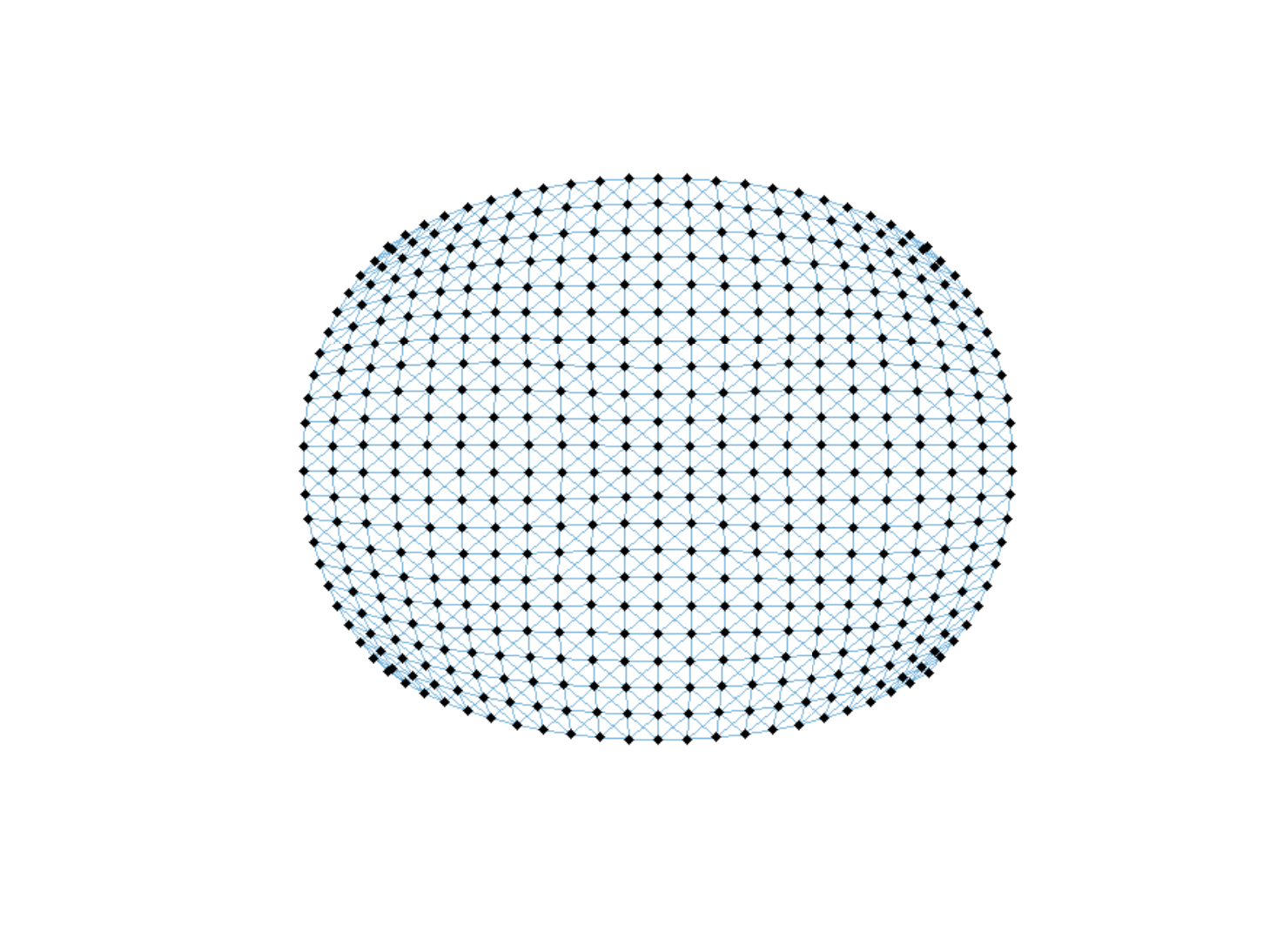}                    
        }
        \subfigure{
        \includegraphics[trim={.6cm .2cm .4cm .2cm}, width=.2\linewidth]{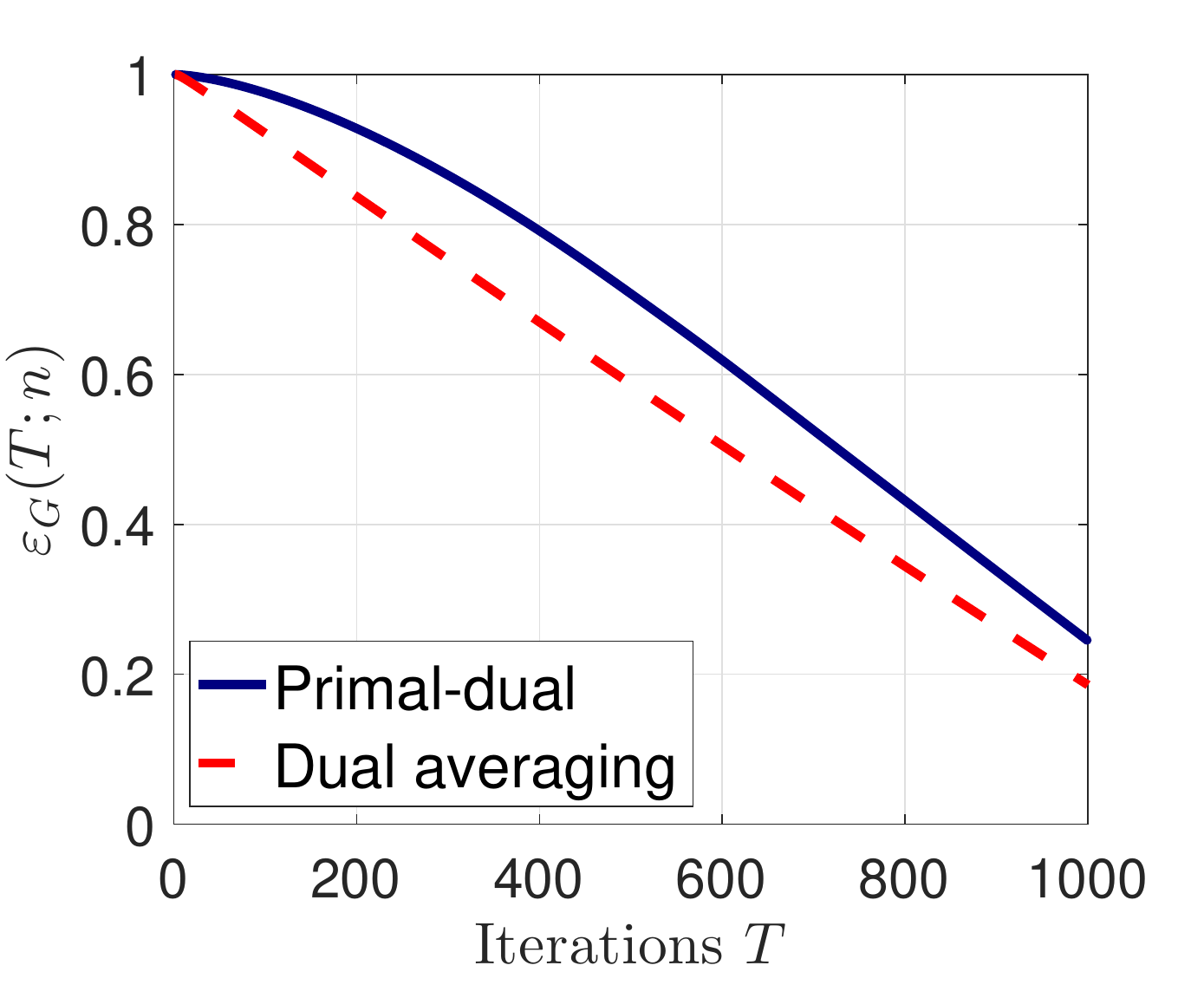} 
        \includegraphics[trim={.6cm .2cm .4cm .2cm}, width=.2\linewidth]{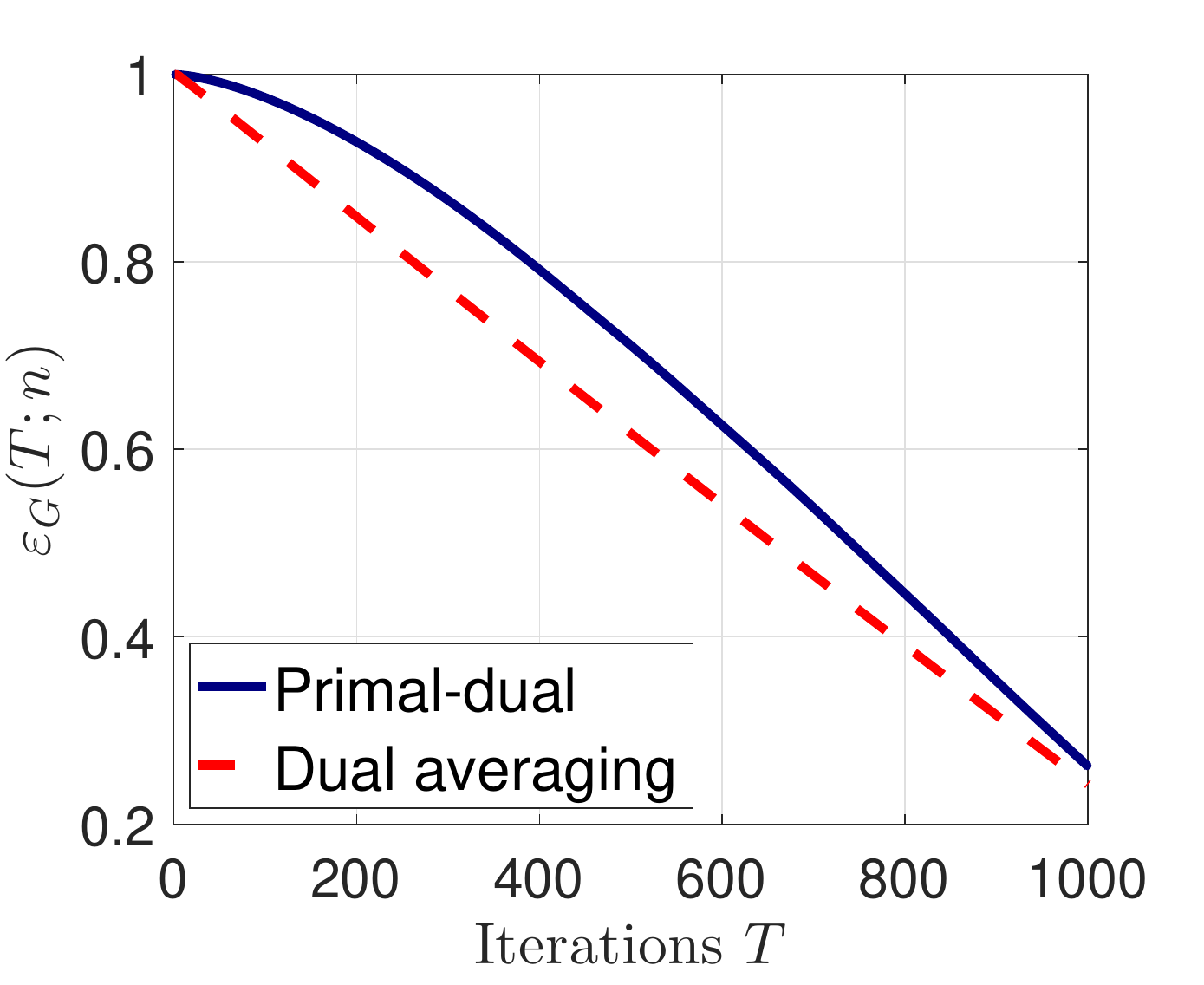}
        \includegraphics[trim={.6cm .2cm .4cm .2cm}, width=.2\linewidth]{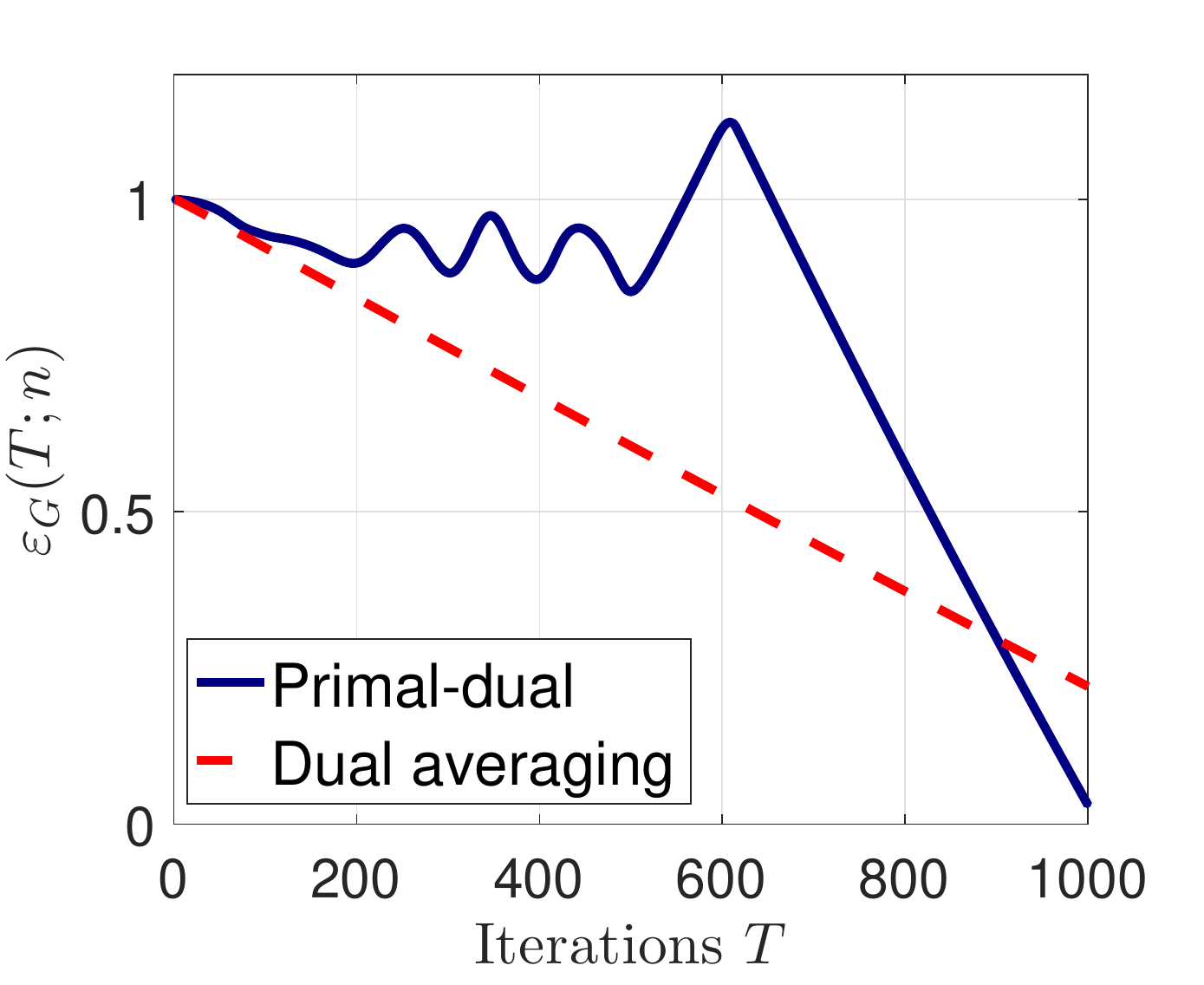}  
        \includegraphics[trim={.6cm .2cm .4cm .2cm}, width=.2\linewidth]{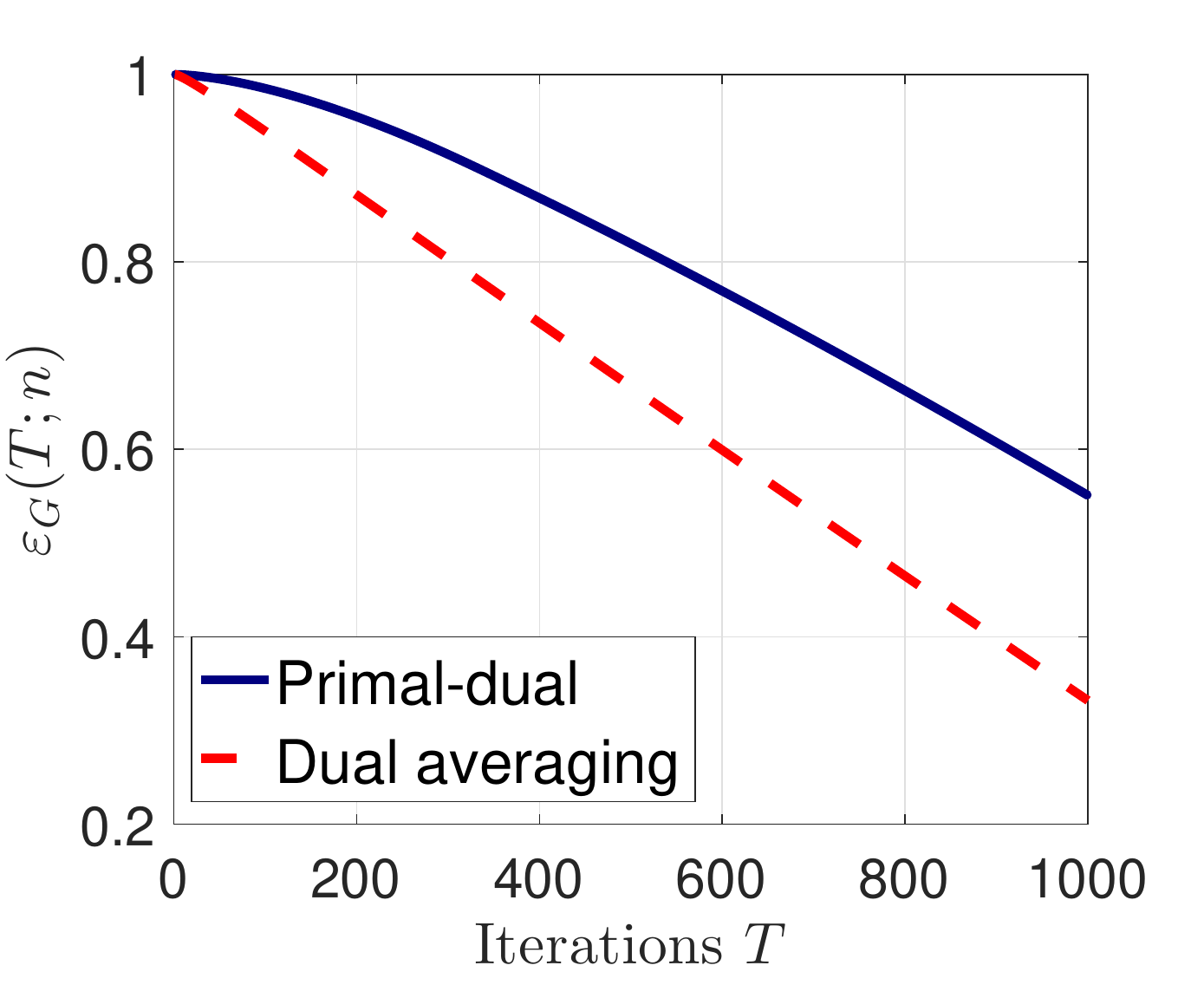}   
        }
        \subfigure{             
        \includegraphics[trim={.6cm .2cm .4cm .2cm}, width=.2\linewidth]{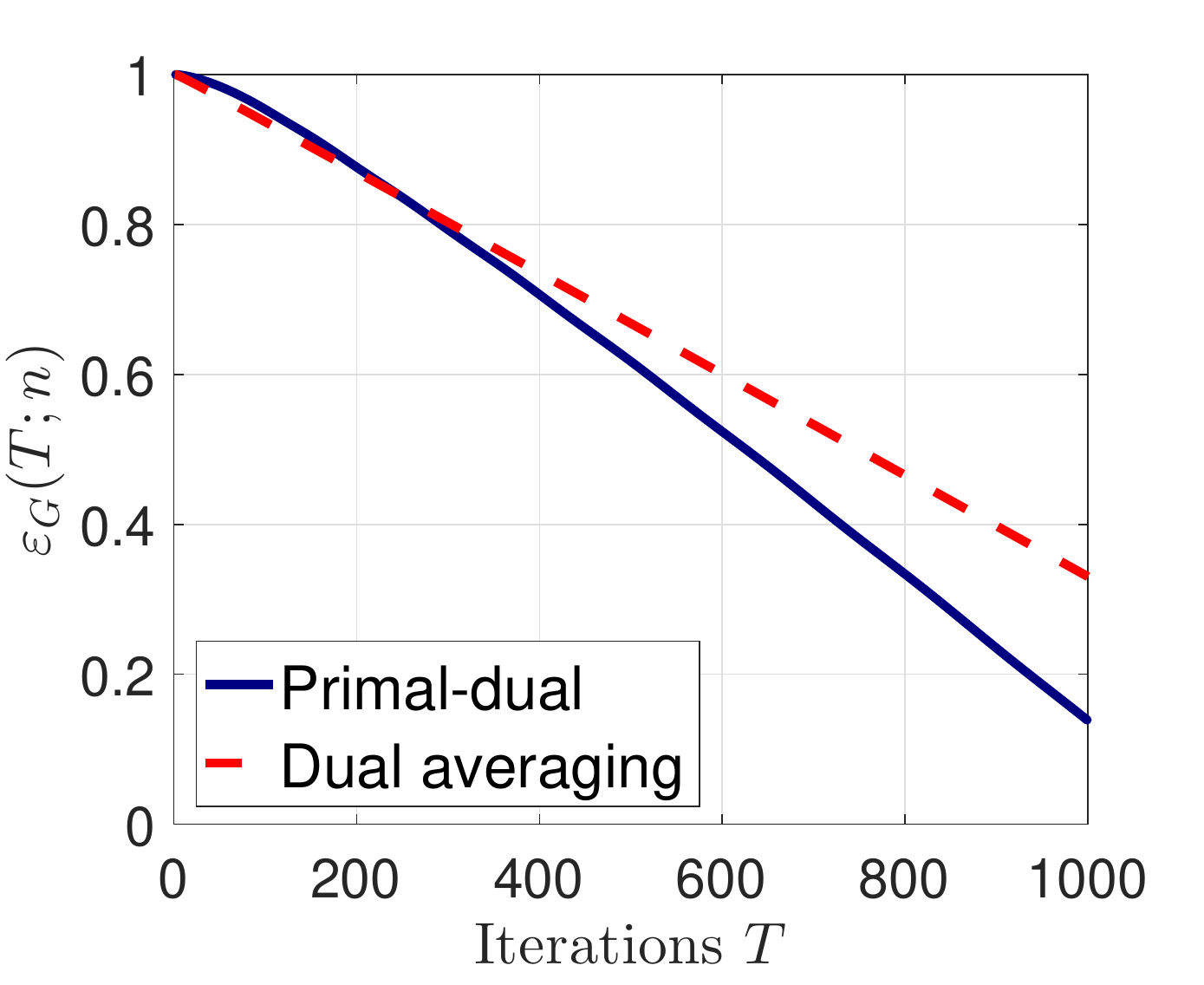}
         \includegraphics[trim={.6cm .2cm .4cm .2cm}, width=.2\linewidth]{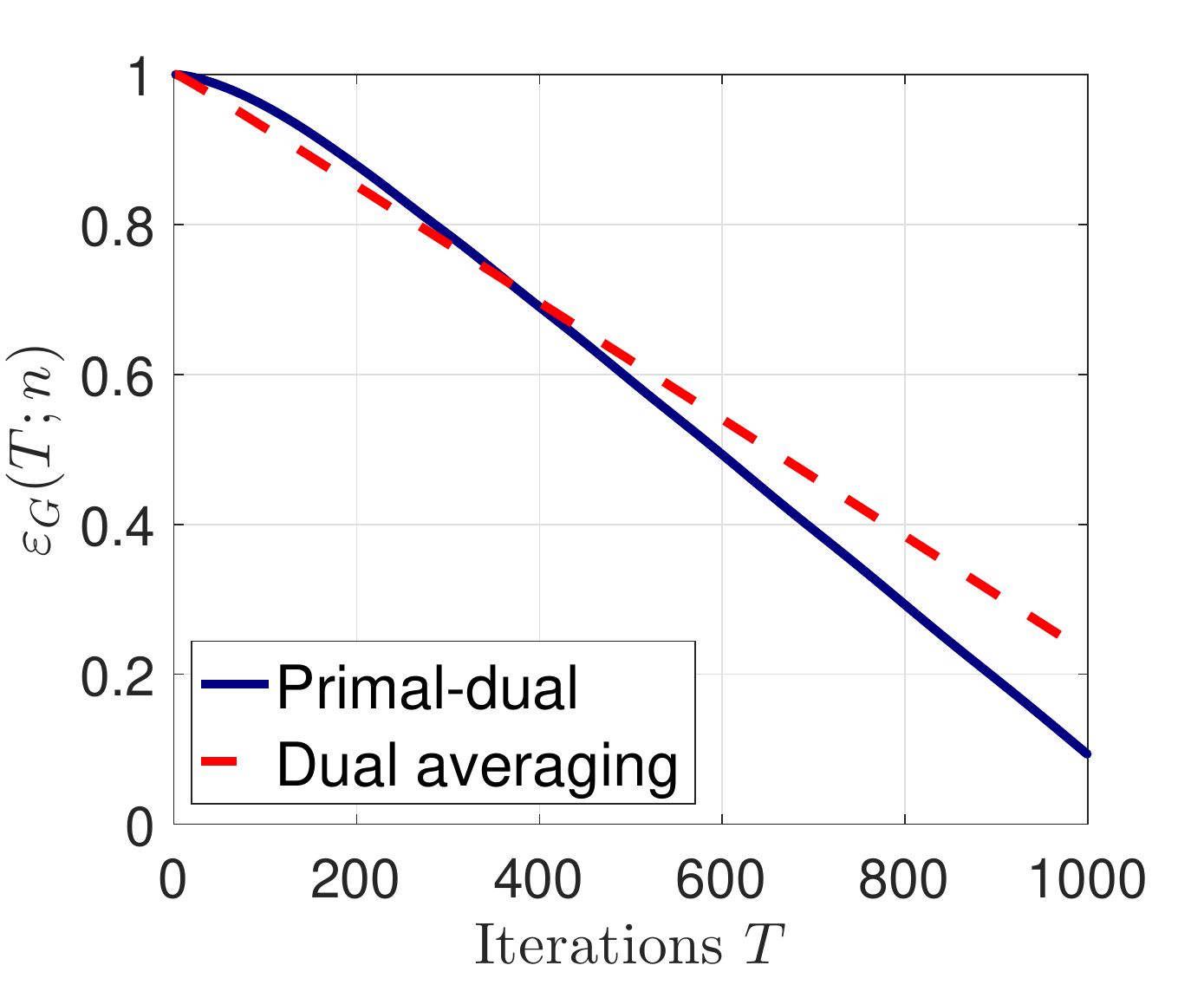}
        \includegraphics[trim={.6cm .2cm .4cm .2cm}, width=.2\linewidth]{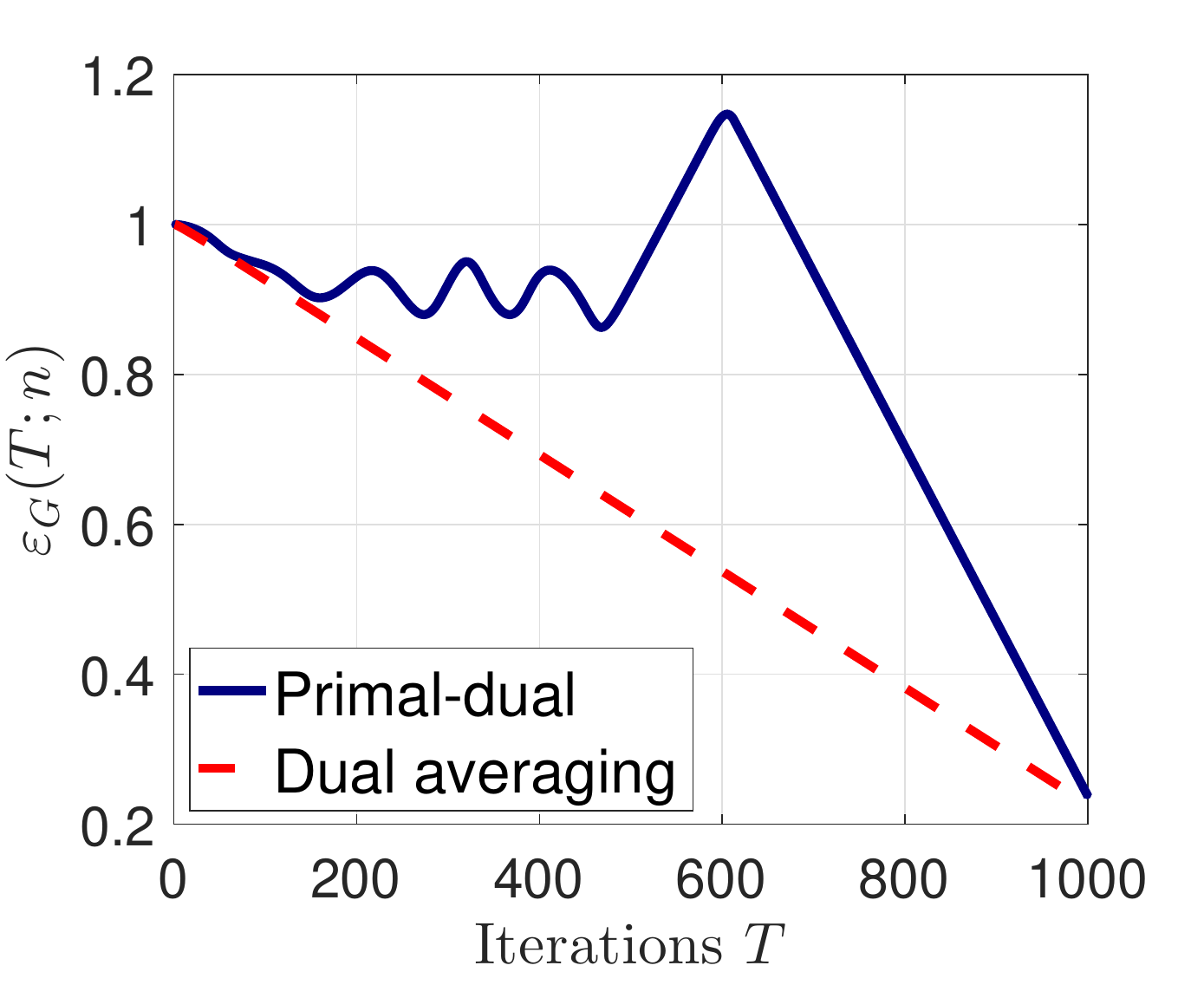}  
        \includegraphics[trim={.6cm .2cm .4cm .2cm}, width=.2\linewidth]{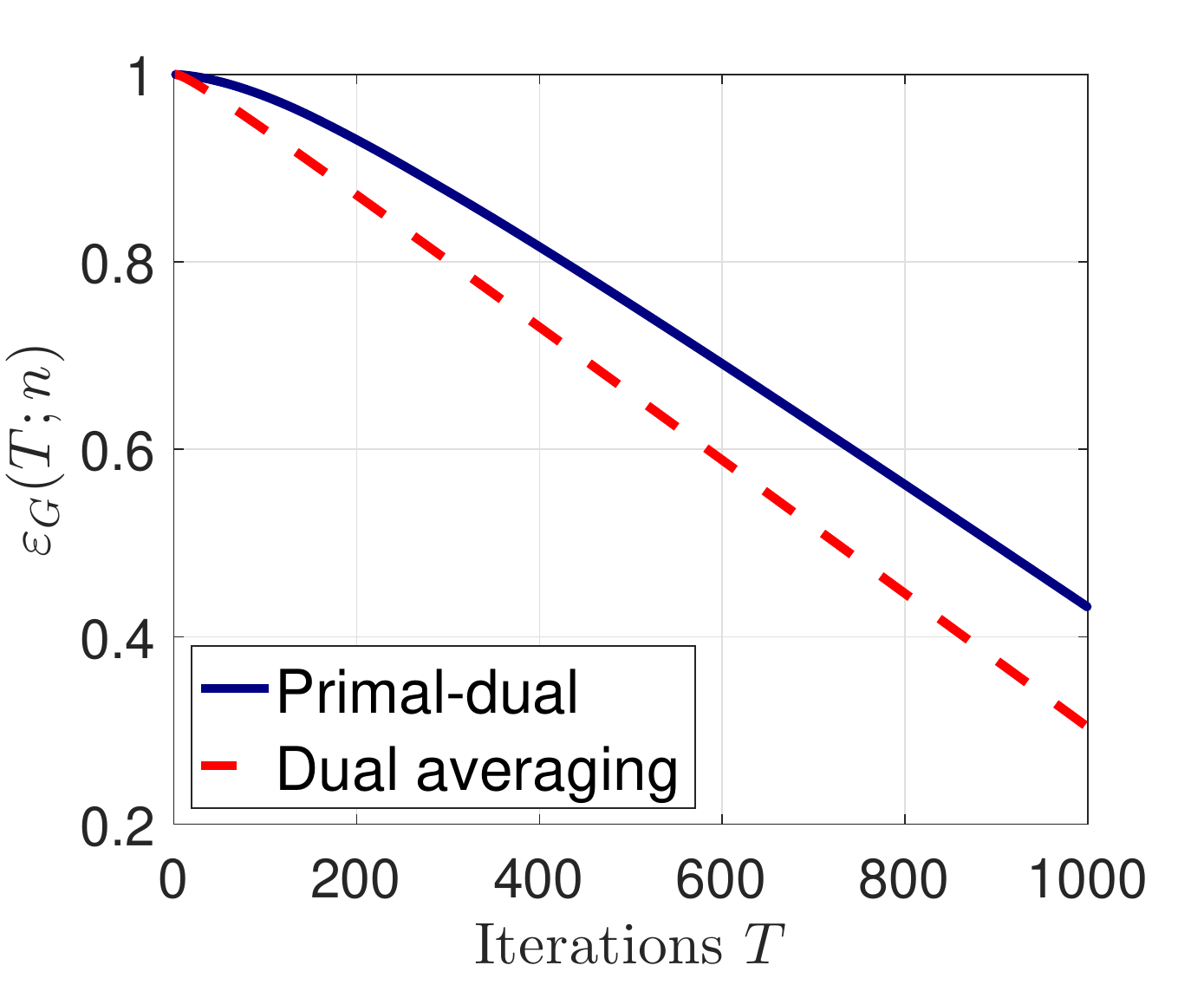}    
        }   
    \end{center}
\caption{\footnotesize{Illustration of three graph models used in simulations and the corresponding average maximum relative error gap $\varepsilon_{G}(T;n)$ with $n=100$ ($l=u=1$, $d=5$) for logistic loss function (middle row) and hinge loss function (bottom row). Top to bottom: Watts-Strogatz graph with $K=20$ and $\vartheta=0.02$, Erd\"{o}s-R\'{e}yni random graph with $p=0.06$, unwrapped 8-connected neighbors lattice, two-clique (barbell) graph.} }
\label{Fig:1}
\end{figure*}

In the Erd\"{o}s-R\'{e}yni random graph, the edge between each pair of nodes is included in the graph with the probability $p$ independent from every other edges. Note that the Watts-Strogatz small-world graph model reduces to the Erd\"{o}s-R\'{e}yni random graph model when $\vartheta=1$. To aggregate the information of neighbors, we use the weight matrix $W$ according to the Lazy metropolis matrix in \eqref{Eq:LazyasIam}.
In the unwrapped graph, each node is adjacent to $8$ neighbors. Lastly, in the barbell graph, we have two cliques of size $n/2$ which are connected by a few links. 

Figure \ref{Fig:1} shows the maximum error $\varepsilon_{G}(T;n)$ for the distributed dual averaging algorithm \cite{duchi2012dual} (dotted lines), and the distributed deterministic primal-dual algorithm (solid lines). For both algorithms, we used the normalized graph Laplacian as the weight matrix (cf. Section \ref{Assumptions}). 
In the distributed dual averaging algorithm, the stepsize is given by $\alpha(t)={R\sqrt{1-\sigma_{2}(W)}\over 4L\sqrt{t+1}}$. For the primal-dual algorithm, the stepsize $\alpha(t)={R\over \sqrt{t+1}}$ is independent of the spectral gap. It is clear from Figure \ref{Fig:1} that on the barbell graph as well as on the lattice, the convergence of both algorithms is slow. This is due to the fact that the spectral gap $1-\sigma_{2}(W)$ of both networks is quite small and thus reaching consensus on these networks is more difficult. 

We also observe that the primal-dual algorithm shows an oscillatory behavior on the barbell graph, whereas the dual averaging algorithm does not. This difference is attributed to the choice of stepsizes. In the dual averaging algorithm, the stepsize is modulated by the spectral gap. Therefore, when the spectral gap is very small (as is the case for the barbell graph), the stepsize is small which suppresses the oscillations. In contrast, the stepsize of the primal-dual algorithm is independent of the spectral gap. Notice that due to incorporating the spectral gap in the dual averaging algorithm, the structure of the network must be known a priori by each agent. This requires extra communication at the beginning of the dual averaging algorithm.

Figure \ref{Fig:2} shows the constraint violation as well as the convergence rate in the centralized primal-dual algorithm without regularization and the decentralized regularized primal-dual algorithm with the values $u=l=0.1$. In this simulations, we choose the initial points $(x_{i}(0),\lambda_{i}(0))$ of the distributed primal-dual algorithm randomly from the feasible region (cf. Remark \ref{Remark}). In this particular example, we observe that in the decentralized primal-dual algorithm, the algorithm output $\widehat{x}_{i}(t)$ is almost feasible for all $t$ and $i\in V$. In contrast, in the centralized primal-dual algorithm, the outputs are infeasible. Here, we thus clearly observe that the regularization can mitigate the constraint violation. 

\begin{figure*}[t!]
\centering
        \centering
         \subfigure[]{
         \label{fig:first}
        \includegraphics[trim={.6cm .2cm .5cm .5cm}, width=.23\linewidth]{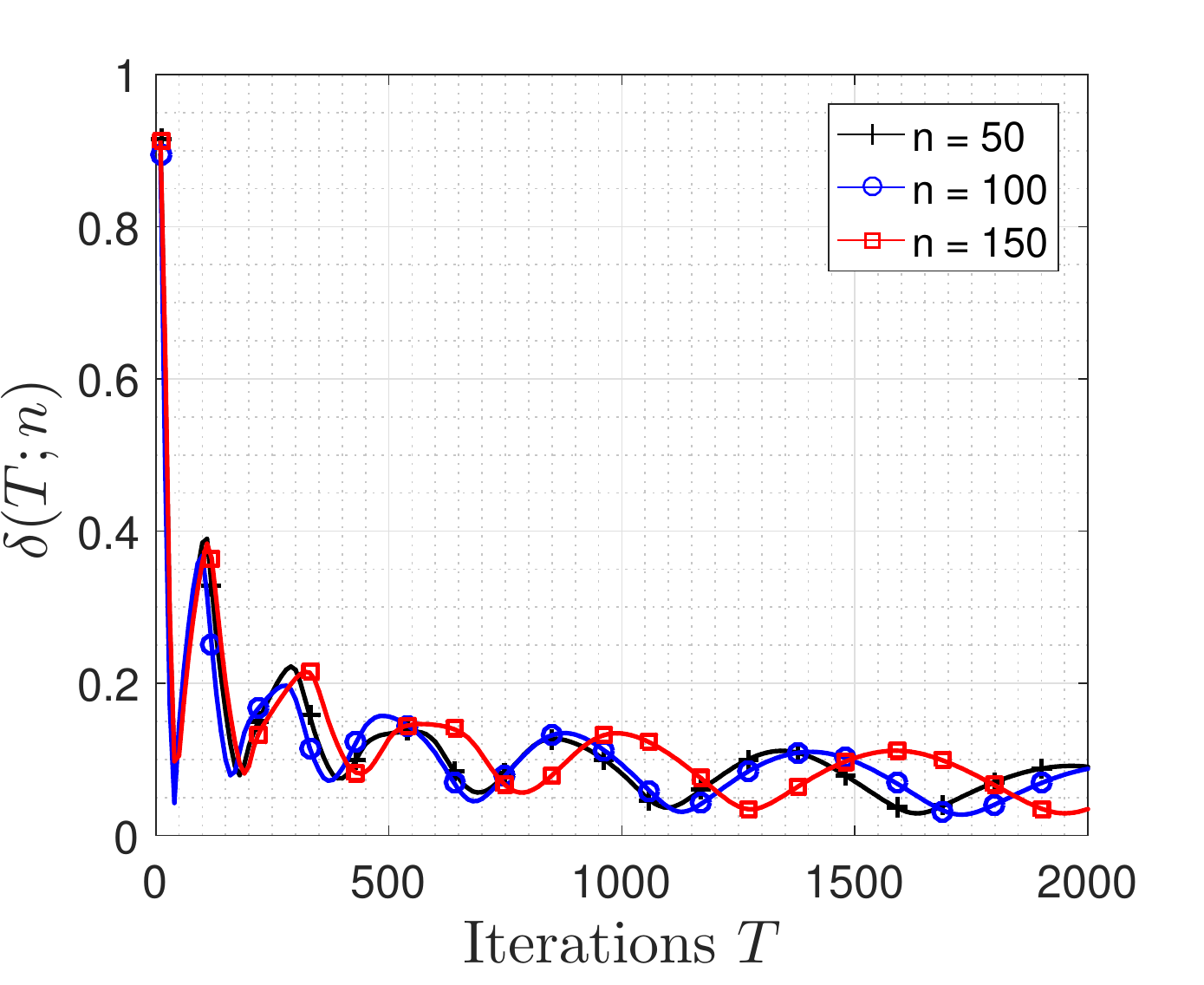} 
        }\hspace{-2mm}
         \subfigure[]{
         \label{fig:second}
        \includegraphics[trim={.6cm .2cm .5cm .5cm}, width=.23\linewidth]{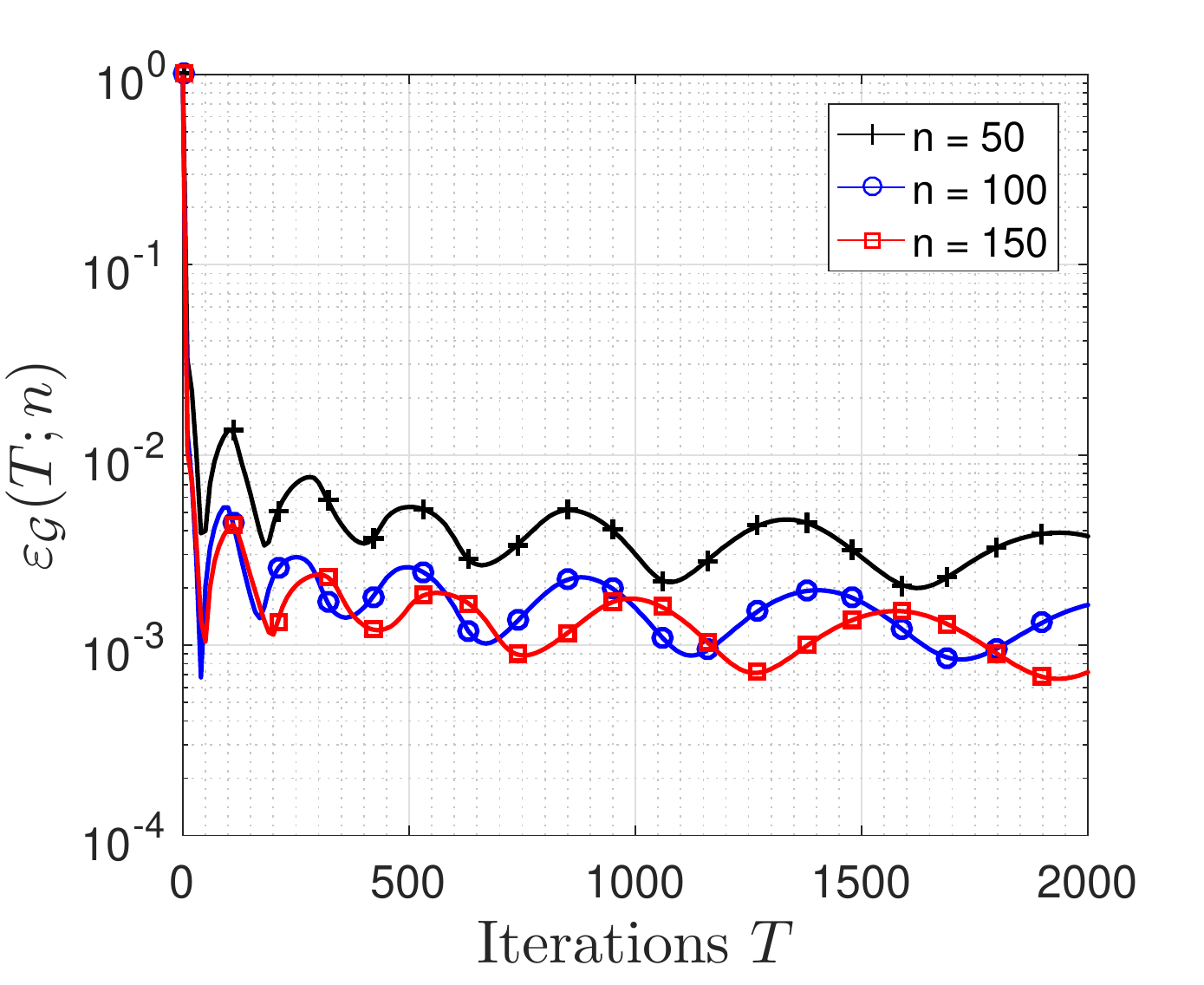}
        }\hspace{-2mm}
         \subfigure[]{
         \label{fig:third}
        \includegraphics[trim={.6cm .2cm .5cm .5cm}, width=.23\linewidth]{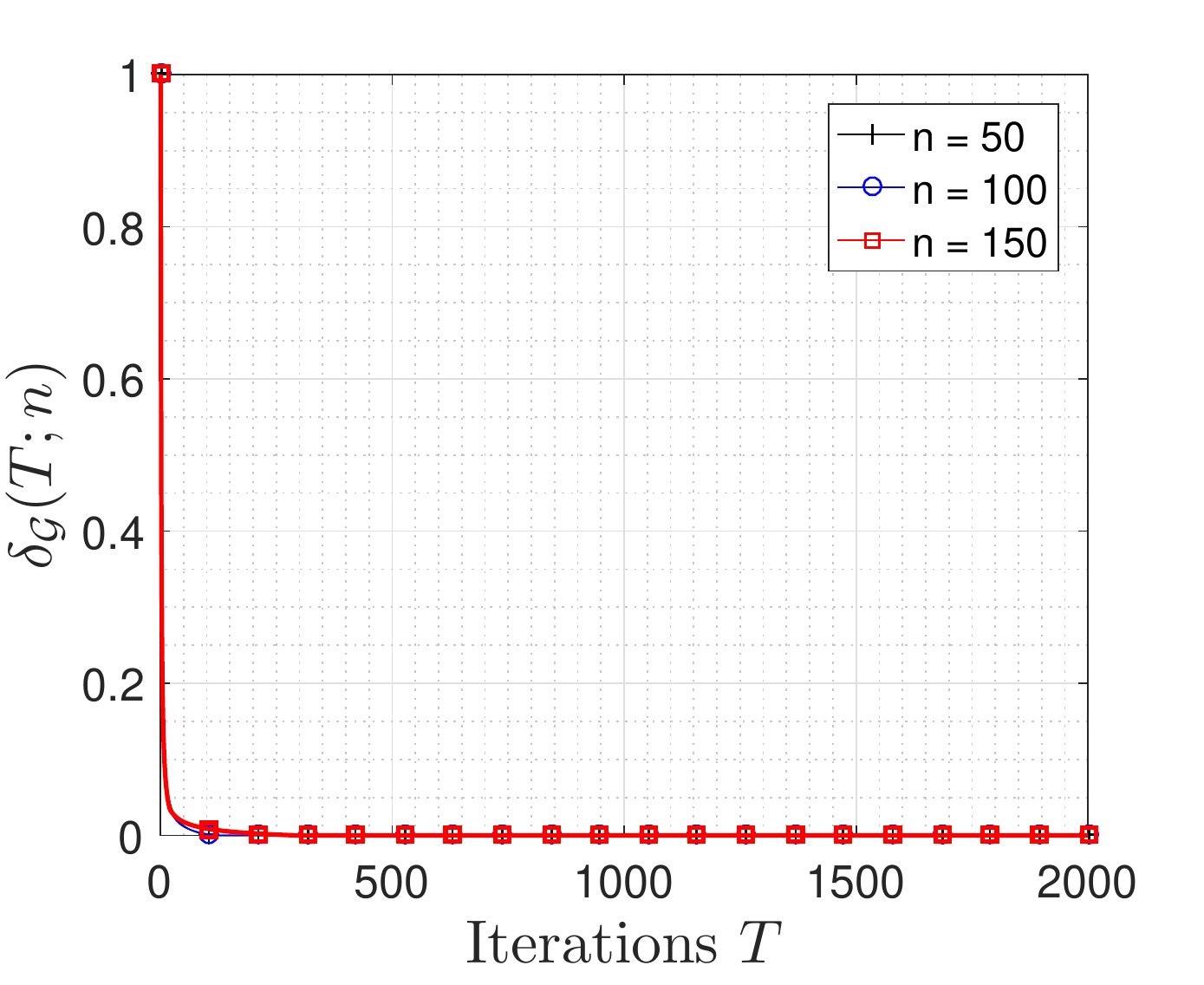}   
        }\hspace{-2mm}
        \subfigure[]{    
        \label{fig:fourth}
        \includegraphics[trim={.6cm .2cm .5cm .5cm}, width=.23\linewidth]{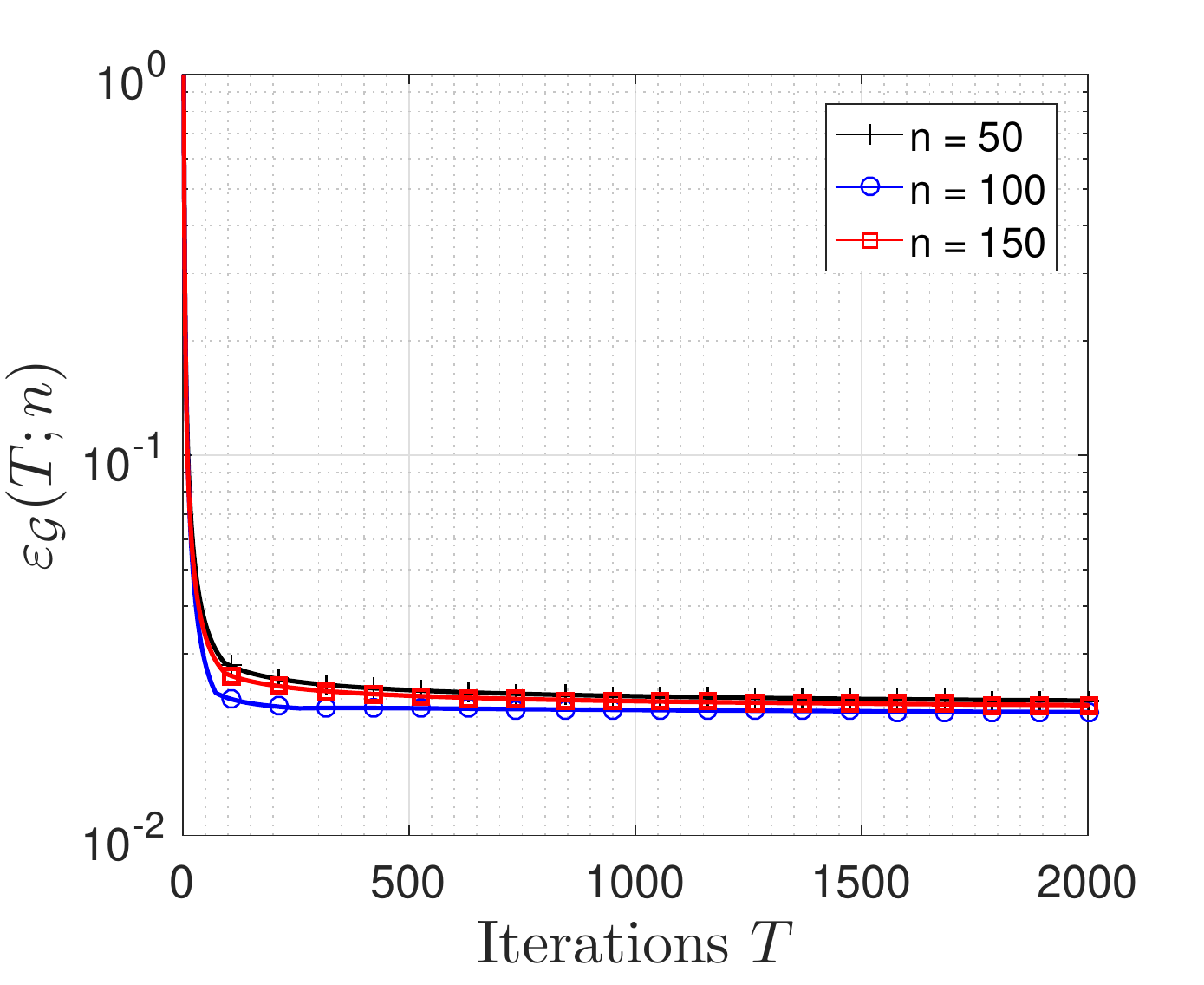}
        }
\caption{\footnotesize{Distributed logistic regression on synthetic data using Watts-Strogratz graph with $K=20$, $\vartheta=0.02$, $\eta=1$, $\alpha(t)= {1\over \sqrt{t+1}}$ and $l=u=0.001$, Panel (a): Constraint violation $\delta(T;n)$ of the centralized PD algorithm without regularization, Panel (b): Convergence rate $\varepsilon(T;n)$ of the centralized PD algorithm without regularization, Panel (c) Constraint violation $\delta_{\mathcal{G}}(T;n)$ of the decentralized PD algorithm, Panel (d): Convergence rate $\varepsilon_{\mathcal{G}}(T;n)$ of the decentralized PD algorithm.}}
\label{Fig:2}
\end{figure*}

\section{Conclusion and Discussion}
\label{Discussion_and_Conclusion}

In this paper, we have studied a distributed regularized primal-dual methods for convex optimization of separable objective functions with inequality constraints. In the proposed distributed methods, the Lagrangian function is regularized with the squared norm of the Lagrangian multipliers. As a result, the norm of Lagrangian multipliers are bounded from above, and consequently, the norm of sub-gradients of the Lagrangian function are also bounded. Using this regularization, we proved a convergence rate for attaining the optimal objective value, and we also presented an asymptotic analysis of the constraint violation of the primal-dual solutions. We showed an interesting trade-off between the convergence rate of the algorithm and the constraint violation bound. In particular, by choosing a large regularization parameter, we can achieve a fast convergence rate. However, a large regularization parameter increases the constraint violation of the primal-dual estimations. Interestingly, when the constraints are satisfied strictly at an optimal solution, such a trade-off does not exists.

We also proposed and analyzed a distributed stochastic primal-dual algorithm. At each step of the stochastic algorithm, one inequality constraint is selected randomly, and its sub-gradient is computed. Therefore, for optimization problems with many inequality constraints, a distributed stochastic primal-dual algorithm is more efficient compared to the deterministic algorithm.
 
As a future research, it is interesting to have a comprehensive analysis of the distributed penalty/barrier function methods, and compare their convergence rates with the distributed primal-dual algorithms we developed in this paper.


\appendix
\section{Proofs of Main Results}
\label{App:Proofs_of_Main_Results}

\subsection{Proof of Lemma \ref{Lem:1}}
\label{Proof of Proposition 1}
The general plan to prove Lemma \ref{Lem:1} is to establish a recursion for the primal variables using the update rule in Steps 3-4 of  Algorithm \ref{CHalgorithm}. For the primal point of the saddle point $(x_{\ast},\lambda_{\ast})\in \mathcal{X}\times \real_{+}\subseteq \ball_{d}(R)\times \real_{+}$ in the minimax problem \eqref{Eq:min-max-regularized}, the following equalities hold,
\begin{align}
\nonumber
&\|x_{i}(t+1)-x_{\ast}\|^{2}
\stackrel{\rm{(a)}}{=} \left\|\Pi_{\ball_{d}(R)}\left(\sum_{j=1}^{n}[W]_{ij}y_{i}(t)\right)-x_{\ast}\right\|^{2}\\ \label{Eq:Yesterday}
&\stackrel{\rm{(b)}}{=} \left\|\Pi_{\ball_{d}(R)}\left(\sum_{j=1}^{n}[W]_{ij}y_{i}(t)\right)-\Pi_{\ball_{d}(R)}(x_{\ast})\right\|^{2},
\end{align}
where ${\rm{(a)}}$ follows by substituting $x_{i}(t+1)$ from \eqref{Eq:That_is_the_way} in Algorithm \ref{CHalgorithm}, and ${\rm{(b)}}$ follows since the optimal point $x_{\ast}$ is in the interior of the Euclidean ball $x_{\ast}\in \mathcal{X}\subset \ball_{d}(R)$ and therefore the projection of the optimal point is the optimal point itself, \textit{i.e.}, $\Pi_{\ball_{d}(R)}(x_{\ast})=x_{\ast}$. 
Due to the non-expansive property of projection (cf. \cite[Chapter III.3]{hiriart1996convex}), the following inequality holds
\begin{align*}
&\left\|\Pi_{\ball_{d}(R)}\left(\sum_{j=1}^{n}[W]_{ij}y_{i}(t)\right)-\Pi_{\ball_{d}(R)}(x_{\ast})\right\|\\ &\leq \left\|\sum_{j=1}^{n}[W]_{ij}y_{i}(t)-x_{\ast}\right\|.
\end{align*}
Replacing the preceding inequality in eq. \eqref{Eq:Yesterday} yields
\begin{align}
\label{Eq:And_hence}
&\|x_{i}(t+1)-x_{\ast}\|^{2}\\ \nonumber
&\leq \left\|\sum_{j=1}^{n}[W]_{ij}y_{i}(t)-x_{\ast}\right\|^{2}\\ \nonumber
&\stackrel{(\rm{c})}{\leq}\sum_{j=1}^{n}[W]_{ij}\|x_{j}(t)-x_{\ast}-\alpha(t)\nabla_{x}L_{j}(x_{j}(t),\lambda_{j}(t))\|^{2}\\  \nonumber
&=\sum_{j=1}^{n}[W]_{ij}\Big(\|x_{j}(t)-x_{\ast}\|^{2}\\ \nonumber &\hspace{20mm} -2\alpha(t)\langle\nabla_{x}L_{j}(x_{j}(t),\lambda_{j}(t)),x_{j}(t)-x_{\ast}\rangle \\ \nonumber &\hspace{20mm} +\alpha^{2}(t)\|\nabla_{x}L_{j}(x_{j}(t),\lambda_{j}(t))\|^{2}\Big),
\end{align}
where $\rm{(c)}$ follows from the convexity of the squared norm and substituting for $y_{i}(t)$ from eq. \eqref{Eq:aux_1} in Step 3 of Algorithm \ref{CHalgorithm}. We compute the sum of both sides of inequality \eqref{Eq:And_hence} over all $i=1,2,\cdots,n$,
\begin{align}
\label{Eq:rec_W}
\sum_{i=1}^{n}\|x_{i}(t+1)-x_{\ast}\|^{2}&\leq 
\sum_{i=1}^{n}\Big(\|x_{i}(t)-x_{\ast}\|^{2}\\ \nonumber &-2\alpha(t)\langle\nabla_{x}L_{i}(x_{i}(t),\lambda_{i}(t)),x_{i}(t)-x_{\ast}\rangle\\ \nonumber
&+\alpha^{2}(t)\|\nabla_{x}L_{i}(x_{i}(t),\lambda_{i}(t))\|^{2} \Big),
 \end{align}\normalsize 
where we used the fact that $W$ is a doubly stochastic matrix by Assumption \ref{Assumption:Weight_matrix}, and therefore we have $\sum_{i=1}^{n}[W]_{ij}=1$.

From the recursion \eqref{Eq:rec_W} with the initial point $x_{i}(0)=0,i\in [n]$, we derive
\begin{align}
\label{Eq:preceding}
\sum_{i=1}^{n}&\|x_{i}(T)-x_{\ast}\|^{2}\leq n\|x_{\ast}\|^{2}\\  \nonumber & -\sum_{t=0}^{T-1}\sum_{i=1}^{n}2\alpha(t)\langle\nabla_{x}L_{i}(x_{i}(t),\lambda_{i}(t)),x_{i}(t)-x_{\ast}\rangle\\ \nonumber
&+\sum_{t=0}^{T-1}\sum_{i=1}^{n}\alpha^{2}(t)\|\nabla_{x}L_{i}(x_{i}(t),\lambda_{i}(t))\|^{2}.
 \end{align}\normalsize
Since the left hand side of eq. \eqref{Eq:preceding} is non-negative, we further obtain the following inequality
\small \begin{align}
\label{Eq:Comb01}
&\sum_{t=0}^{T-1}\sum_{i=1}^{n}2\alpha(t)\langle\nabla_{x}L_{i}(x_{i}(t),\lambda_{i}(t)),x_{i}(t)-x_{\ast}\rangle\\ \nonumber &\leq  n\|x_{\ast}\|^{2}+\sum_{t=0}^{T-1}\sum_{i=1}^{n}\alpha^{2}(t)\|\nabla_{x}L_{i}(x_{i}(t),\lambda_{i}(t))\|^{2}.
 \end{align}\normalsize

Now, recall from Assumption \ref{Assumption:Lipschitz Functions} that $f_{i}(\cdot)$ and $g_{k}(\cdot)$ are convex functions  for all $i\in [n]$ and $k\in [m]$. Therefore, the Lagrangian function $L_{i}(\cdot,\lambda_{i})$ is convex, \textit{i.e.},
\begin{align}
\nonumber
L_{i}(x_{i}(t),\lambda_{i}(t))&-L_{i}(x_{\ast},\lambda_{i}(t))\\ \label{Eq:011}
&\leq  \langle \nabla_{x}L_{i}(x_{i}(t),\lambda_{i}(t)), x_{i}(t)-x_{\ast}\rangle.
 \end{align}\normalsize
Substituting the inequality \eqref{Eq:011} in eq. \eqref{Eq:Comb01} gives us
\begin{align}
\nonumber
&\sum_{t=0}^{T-1}\sum_{i=1}^{n}2\alpha(t)\Big(L_{i}(x_{i}(t),\lambda_{i}(t))-L_{i}(x_{\ast},\lambda_{i}(t))\Big)\\
\label{Eq:after_convex}
&\leq n\|x_{\ast}\|^{2}+\sum_{t=0}^{T-1}\sum_{i=1}^{n}\alpha^{2}(t)\|\nabla_{x}L_{i}(x_{i}(t),\lambda_{i}(t))\|^{2}.
 \end{align}\normalsize
 
After expanding the Lagrangian function on the left hand of eq. \eqref{Eq:after_convex}, we derive
\begin{align*}
&\sum_{t=0}^{T-1}\sum_{i=1}^{n} 2\alpha(t)\Big(f_{i}(x_{i}(t))-f_{i}(x_{\ast})\Big)\\ &+\sum_{t=1}^{T-1}\sum_{i=1}^{n}2\alpha(t)\Big(\langle g(x_{i}(t)),\lambda_{i}(t) \rangle- \langle g(x_{\ast}),\lambda_{i}(t)\rangle\Big)\\ &\leq \text{r.h.s. of eq. \eqref{Eq:after_convex}}.
\end{align*}
Since $x_{\ast}\in \mathcal{X}$ is a saddle point, it must satisfies the inequality constraints, \textit{i.e.}, $g(x_{\ast})\preceq 0$. Furthermore, the dual variables are non-negative $\lambda_{i}(t)\succeq 0$, and thus the inner product $\langle g(x_{\ast}),\lambda_{i}(t) \rangle\leq 0$ is non-positive. Hence, we can remove the second inner product term from the left hand side of the preceding inequality 
\begin{align}
\nonumber
&\sum_{t=0}^{T-1}\sum_{i=1}^{n} 2\alpha(t)\Big(f_{i}(x_{i}(t))-f_{i}(x_{\ast})\Big)\\ \label{Eq:empirical_risk_form} &+\sum_{t=0}^{T-1}\sum_{i=1}^{n}2\alpha(t)\langle g(x_{i}(t)),\lambda_{i}(t) \rangle \leq \text{r.h.s. of eq. \eqref{Eq:after_convex}}.
 \end{align}\normalsize
We now proceed by computing a bound on the second term on the l.h.s. of the inequality \eqref{Eq:empirical_risk_form}. To this end, we state a lemma: 
\begin{lemma}
\label{Lemma:aux_post_1}
For all $T\in \mathbb{N}$, the following inequality holds
\begin{align}
\nonumber
&\sum_{t=0}^{T-1}\sum_{i=1}^{n}\alpha(t)\eta\|\lambda_{i}(t)\|^{2}- \sum_{t=0}^{T-1}\sum_{i=1}^{n}\alpha^{2}(t)\|\nabla_{\lambda}L_{i}(x_{i}(t),\lambda_{i}(t))\|^{2}\\ \label{Eq:Lower_Bound_on_Lemma}
&\leq \sum_{t=0}^{T-1}\sum_{i=1}^{n}2\alpha(t)\langle g(x_{i}(t)),\lambda_{i}(t) \rangle.
\end{align} 
\end{lemma}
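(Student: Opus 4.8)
The plan is to mirror, on the dual side, the recursion that was established for the primal variables in eqs. \eqref{Eq:And_hence}--\eqref{Eq:preceding}. I would start from the dual consensus update $\lambda_{i}(t+1)=\Pi_{\real^{m}_{+}}\big(\sum_{j=1}^{n}[W]_{ij}\gamma_{j}(t)\big)$ in Step 4 of Algorithm \ref{CHalgorithm} and measure the distance of $\lambda_{i}(t+1)$ to the reference point $0\in\real^{m}_{+}$ (rather than to the saddle point $\lambda_{\ast}$, since the initialization is $\lambda_{i}(0)=0$ and the target inequality \eqref{Eq:Lower_Bound_on_Lemma} contains no $\lambda_{\ast}$). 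Because $\Pi_{\real^{m}_{+}}(0)=0$ and the projection onto the convex set $\real^{m}_{+}$ is non-expansive, $\|\lambda_{i}(t+1)\|^{2}\le\big\|\sum_{j}[W]_{ij}\gamma_{j}(t)\big\|^{2}$; convexity of the squared norm together with $\sum_{j}[W]_{ij}=1$ gives $\|\lambda_{i}(t+1)\|^{2}\le\sum_{j}[W]_{ij}\|\gamma_{j}(t)\|^{2}$, and summing over $i\in[n]$ and using double stochasticity of $W$ ($\sum_{i}[W]_{ij}=1$) yields $\sum_{i}\|\lambda_{i}(t+1)\|^{2}\le\sum_{j}\|\gamma_{j}(t)\|^{2}$.

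Next I would expand $\|\gamma_{j}(t)\|^{2}=\|\lambda_{j}(t)+\alpha(t)\nabla_{\lambda}L_{j}(x_{j}(t),\lambda_{j}(t))\|^{2}$ into $\|\lambda_{j}(t)\|^{2}+2\alpha(t)\langle\lambda_{j}(t),\nabla_{\lambda}L_{j}(x_{j}(t),\lambda_{j}(t))\rangle+\alpha^{2}(t)\|\nabla_{\lambda}L_{j}(x_{j}(t),\lambda_{j}(t))\|^{2}$ and substitute the explicit form $\nabla_{\lambda}L_{j}(x_{j}(t),\lambda_{j}(t))=g(x_{j}(t))-\eta\lambda_{j}(t)$ from eq. \eqref{Eq:sub_gradient_L2} into the cross term, giving $\langle\lambda_{j}(t),\nabla_{\lambda}L_{j}(x_{j}(t),\lambda_{j}(t))\rangle=\langle g(x_{j}(t)),\lambda_{j}(t)\rangle-\eta\|\lambda_{j}(t)\|^{2}$. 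Collecting the $\eta$-terms gives the per-step inequality
\[
\sum_{j=1}^{n}\!\big(2\alpha(t)\eta\|\lambda_{j}(t)\|^{2}-\alpha^{2}(t)\|\nabla_{\lambda}L_{j}(x_{j}(t),\lambda_{j}(t))\|^{2}\big)\le\sum_{j=1}^{n}\|\lambda_{j}(t)\|^{2}-\sum_{i=1}^{n}\|\lambda_{i}(t+1)\|^{2}+2\alpha(t)\!\sum_{j=1}^{n}\!\langle g(x_{j}(t)),\lambda_{j}(t)\rangle.
\]
Summing over $t=0,\dots,T-1$, the first two terms on the right telescope to $\sum_{j}\|\lambda_{j}(0)\|^{2}-\sum_{i}\|\lambda_{i}(T)\|^{2}$, which is $\le 0$ since $\lambda_{j}(0)=0$; discarding it leaves the claimed bound but with a factor $2$ on the $\eta$-term, and since $\alpha(t)\eta\|\lambda_{i}(t)\|^{2}\le 2\alpha(t)\eta\|\lambda_{i}(t)\|^{2}$ (a non-negative quantity), the stated inequality \eqref{Eq:Lower_Bound_on_Lemma} follows immediately.

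The argument is essentially routine once the correct reference point $\lambda=0$ is chosen, so I do not anticipate a genuine obstacle. The only points requiring care are the direction of the inequalities — the non-expansiveness yields an \emph{upper} bound on $\|\lambda_{i}(t+1)\|^{2}$, which only after rearrangement becomes a \emph{lower} bound on the left-hand side of \eqref{Eq:Lower_Bound_on_Lemma} — and the bookkeeping of the telescoping sum, which relies crucially on the zero initialization $\lambda_{i}(0)=0$ from Step 1 of Algorithm \ref{CHalgorithm} (consistent with Remark \ref{Remark}).
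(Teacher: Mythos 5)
Your proof is correct and follows essentially the same route as the paper's (Appendix B.2): a recursion for the dual iterates via the non-expansive projection onto $\real_{+}^{m}$, Jensen's inequality with the doubly stochastic weights, and telescoping from $\lambda_{i}(0)=0$. The only difference is that you fix the reference point $\lambda=0$ from the outset and expand the cross term exactly rather than invoking concavity of $L_{i}(x_{i}(t),\cdot)$, which yields the slightly stronger coefficient $2\alpha(t)\eta\|\lambda_{i}(t)\|^{2}$ before relaxing to the stated bound.
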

\begin{proof}
See Appendix \ref{App:aux_Proof_of_Lemma}.
\end{proof}

Using the lower bound \eqref{Eq:Lower_Bound_on_Lemma} of Lemma \ref{Lemma:aux_post_1} in conjunction with eq. \eqref{Eq:empirical_risk_form} results in the following inequality
\small \begin{align}
 \label{Eq:empirical_risk_form_1}
&\sum_{t=0}^{T-1}\sum_{i=1}^{n} 2\alpha(t)\Big(f_{i}(x_{i}(t))-f_{i}(x_{\ast})\Big)\\ \nonumber
&\leq n\|x_{\ast}\|^{2}-\sum_{t=0}^{T-1}\sum_{i=1}^{n}\alpha(t)\eta\|\lambda_{i}(t)\|^{2}\\ \nonumber
&+\sum_{t=0}^{T-1}\sum_{i=1}^{n}\alpha^{2}(t)\left(\|\nabla_{x}L_{i}(x_{i}(t),\lambda_{i}(t))\|^{2}+\|\nabla_{\lambda}L_{i}(x_{i}(t),\lambda_{i}(t))\|^{2}\right).
 \end{align}\normalsize 

Now, recall the definition of the cumulative cost function $f(\cdot)$ in \eqref{Eq:emperical_1}. Our goal in the rest of the proof is to establish an inequality in terms of the cumulative cost function. To this end, we first use the convexity of $f_{j}(\cdot)$ (cf. Assumption \ref{Assumption:Lipschitz Functions}) to derive the following inequality
\begin{align}
\label{Eq:App+sub}
f_{i}(x_{j}(t))+\langle \nabla f_{i}(x_{j}(t)),x_{i}(t)-x_{j}(t) \rangle \leq f_{i}(x_{i}(t)).
 \end{align}\normalsize
We then substitute \eqref{Eq:App+sub} into eq. \eqref{Eq:empirical_risk_form_1} to obtain
\begin{align}
\label{Eq:proceed_2}
&\sum_{t=0}^{T-1}\sum_{i=1}^{n}2\alpha(t)\Big((f_{i}(x_{j}(t))-f_{i}(x_{\ast}))\Big)\\ \nonumber &+\sum_{t=0}^{T-1}\sum_{i=1}^{n}2\alpha(t)\langle  \nabla f_{i}(x_{j}(t)),x_{i}(t)-x_{j}(t) \rangle\Big)\\ \nonumber &\leq \text{r.h.s. of eq. \eqref{Eq:empirical_risk_form_1}}.
 \end{align}\normalsize 
From the definition of the cumulative cost function in eq. \eqref{Eq:emperical_1}, we have
\begin{align}
\label{Eq:replacing_for}
f(x_{j}(t))=\dfrac{1}{n}\sum_{i=1}^{n}f_{i}(x_{j}(t)).
 \end{align}\normalsize
A similar equality holds for $f(x_{\ast})$. We now proceed from eq. \eqref{Eq:proceed_2} by using the equality \eqref{Eq:replacing_for},
\small \begin{align}
\nonumber
&2n\sum_{t=0}^{T-1}\alpha(t)(f(x_{j}(t))-f(x_{\ast})) \leq n\|x_{\ast}\|^{2}-\sum_{t=0}^{T-1}\sum_{i=1}^{n}\alpha(t)\eta\|\lambda_{i}(t)\|^{2}
\\  \label{Eq:divide_and_use_1}
&+2 \sum_{t=0}^{T-1}\sum_{i=1}^{n}\alpha(t)\langle \nabla f_{i}(x_{j}(t)),x_{j}(t)-x_{i}(t) \rangle\\
\nonumber
&+\sum_{t=0}^{T-1}\sum_{i=1}^{n}\alpha^{2}(t)\left(\|\nabla_{x}L_{i}(x_{i}(t),\lambda_{i}(t))\|_{\ast}^{2}+\|\nabla_{\lambda}L_{i}(x_{i}(t),\lambda_{i}(t))\|^{2}\right).
 \end{align}\normalsize
From Step 5 of Algorithm \ref{CHalgorithm}, recall the definition of the running average
\begin{align}
\label{Eq:Construction_of_x_hat}
\widehat{x}_{j}(T)=\dfrac{\sum_{t=0}^{T-1}\alpha(t) x_{j}(t)}{\sum_{t=0}^{T-1}\alpha(t)}.
\end{align}
Since the functions $f_{i}(\cdot),i\in V$ is convex (cf. Assumption \ref{Assumption:Lipschitz Functions}), so is $f(\cdot)$ (see \eqref{Eq:emperical_1}). Therefore,
\begin{align}
\label{Eq:divide_and_use_2}
f(\widehat{x}_{j}(T))&\leq \dfrac{\sum_{t=0}^{T-1}\alpha(t)f(x_{j}(t))}{\sum_{t=0}^{T-1}\alpha(t)}.
 \end{align}\normalsize
We divide both sides of eq. \eqref{Eq:divide_and_use_1} by ${1\over (2n\sum_{t=0}^{T-1}\alpha(t))}$ and use the inequality \eqref{Eq:divide_and_use_2} to derive
\footnotesize \begin{align}
\nonumber
&f(\widehat{x}_{j}(T))-f(x_{\ast})\leq \dfrac{1}{\sum_{t=0}^{T-1}\alpha(t)}\Bigg[\dfrac{1}{2}{\|x_{\ast}\|^{2}}-\dfrac{\eta}{2n}\sum_{t=0}^{T-1}\sum_{i=1}^{n}\alpha(t)\|\lambda_{i}(t)\|^{2}\\ \label{Eq:Todays_real}
&+\dfrac{1}{n} \sum_{t=0}^{T-1}\sum_{i=1}^{n}\alpha(t)\langle \nabla f_{i}(x_{j}(t)),x_{j}(t)-x_{i}(t) \rangle\\ \nonumber
&+\dfrac{1}{n}\sum_{t=0}^{T-1}\sum_{i=1}^{n}\alpha^{2}(t)\left(\|\nabla_{x}L_{i}(x_{i}(t),\lambda_{i}(t))\|^{2}+\|\nabla_{\lambda}L_{i}(x_{i}(t),\lambda_{i}(t))\|^{2}\right)\Bigg].
\end{align}\normalsize
From the Cauchy-Schwarz inequality and the bound $\|\nabla f_{i}(x_{j}(t))\|\leq L$ (cf. Assumption \ref{Assumption:Lipschitz Functions}), we derive
\begin{align}
\nonumber
&\langle \nabla f_{i}(x_{j}(t)),x_{j}(t)-x_{i}(t) \rangle \\ \nonumber &\hspace{18mm}  \leq \| \nabla f_{i}(x_{j}(t))\|\cdot \|x_{j}(t)-x_{i}(t)\|\\ \label{Eq:Replacing_the_bound_tt}
&\hspace{18mm}\leq L\|x_{j}(t)-x_{i}(t)\|.
\end{align}
Substituting the upper bound  \eqref{Eq:Replacing_the_bound_tt} in \eqref{Eq:Todays_real}, and changing the roles of the variables $i$ and $j$ completes the proof. 

\subsection{Proof of Theorem \ref{Thm:2}}
\label{App:TheRestofTheProof}

To prove Theorem \ref{Thm:2}, we need the following two lemmas:
\begin{lemma}
\label{Lemma:GradientsBounds}
Consider the stepsize $\alpha(t)$ and the regularization parameter $\eta$ that satisfy $\alpha(t)\eta\leq 1$ for all $t\in [T]$. Then, the sub-gradients of the Lagrangian function are bounded by
\small \begin{align}
\label{Eq:Bound1}
\|\nabla_{\lambda}L_{i}(x_{i}(t),\lambda_{i}(t))\|^{2}&\leq 2mL^{2}R^{2}+2\eta^{2}\|\lambda_{i}(t)\|^{2},\\
\label{Eq:Bound2}
\|\nabla_{x}L_{i}(x_{i}(t),\lambda_{i}(t))\|^{2}&\leq L^{2}\left(1+\dfrac{nm^{2}LR}{\eta}\right)^{2},
\end{align}\normalsize
for all $t\in [T]$. 
\end{lemma}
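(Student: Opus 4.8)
The plan is to prove Lemma~\ref{Lemma:GradientsBounds} in two stages. First, I would establish a uniform, agent-independent bound on the norm of the dual iterates of the form $\|\lambda_i(t)\|\le \sqrt{m}\,LR/\eta$ for all $i\in[n]$ and all $t\in[T]$; this is exactly the ``norm of the Lagrangian multipliers is inversely proportional to $\eta$'' statement advertised in the abstract, and the restriction $\alpha(t)\eta\le 1$ enters precisely here. Second, I would substitute this bound into the closed-form sub-gradient expressions \eqref{Eq:sub_gradient_L1}--\eqref{Eq:sub_gradient_L2}, and combine it with the Lipschitz bounds $\|\nabla f_i\|,\|\nabla g_k\|\le L$ of Assumption~\ref{Assumption:Lipschitz Functions} and the elementary inequality $\|a+b\|^2\le 2\|a\|^2+2\|b\|^2$ to read off \eqref{Eq:Bound1} and \eqref{Eq:Bound2}.

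For the dual-norm bound, using $\nabla_\lambda L_i(x,\lambda)=g(x)-\eta\lambda$ I would first rewrite the update \eqref{Eq:aux_2} as
\begin{align*}
\gamma_i(t)=(1-\alpha(t)\eta)\,\lambda_i(t)+\alpha(t)\,g(x_i(t)).
\end{align*}
Since $\alpha(t)\eta\le 1$, the scalar $1-\alpha(t)\eta$ lies in $[0,1]$, so $\|\gamma_i(t)\|\le(1-\alpha(t)\eta)\|\lambda_i(t)\|+\alpha(t)\|g(x_i(t))\|$. Because every consensus step projects the primal iterate onto $\ball_d(R)$ and $x_i(0)=0\in\ball_d(R)$, we have $x_j(t)\in\ball_d(R)$ for all $j,t$, whence $\|g(x_j(t))\|\le\sqrt{m}\,LR$ by the $L$-Lipschitz continuity of each $g_k$ on $\ball_d(R)$. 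Next, applying the non-expansiveness of $\Pi_{\real_{+}^{m}}$ (together with $\Pi_{\real_{+}^{m}}(0)=0$) and the convexity of the norm for the row-stochastic matrix $W$ of Assumption~\ref{Assumption:Weight_matrix} to the dual consensus step gives
\begin{align*}
\|\lambda_i(t+1)\|\le\sum_{j=1}^{n}[W]_{ij}\|\gamma_j(t)\|\le\max_{1\le j\le n}\|\gamma_j(t)\|.
\end{align*}
Writing $M(t):=\max_{j}\|\lambda_j(t)\|$, the last two displays yield the one-dimensional recursion $M(t+1)\le(1-\alpha(t)\eta)M(t)+\alpha(t)\sqrt{m}\,LR$ with $M(0)=0$, and a one-line induction (using $1-\alpha(t)\eta\ge0$) shows $M(t)\le\sqrt{m}\,LR/\eta$ for every $t$.

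It then remains to substitute. For \eqref{Eq:Bound1}, apply $\|a-b\|^2\le 2\|a\|^2+2\|b\|^2$ to $\nabla_\lambda L_i(x_i(t),\lambda_i(t))=g(x_i(t))-\eta\lambda_i(t)$ and use $\|g(x_i(t))\|^2\le mL^2R^2$, keeping the $\|\lambda_i(t)\|^2$ term explicit. For \eqref{Eq:Bound2}, the triangle inequality applied to $\nabla_x L_i(x_i(t),\lambda_i(t))=\nabla f_i(x_i(t))+\sum_{k=1}^m\lambda_{i,k}(t)\nabla g_k(x_i(t))$ gives $\|\nabla_x L_i\|\le L(1+\|\lambda_i(t)\|_1)$; bounding $\|\lambda_i(t)\|_1\le\sqrt{m}\,\|\lambda_i(t)\|_2\le mLR/\eta$ and squaring then yields a bound of the claimed shape (the looser network/dimension prefactor in \eqref{Eq:Bound2} is just the coarser constant carried through the full write-up). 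The main obstacle I anticipate is the first stage: because the dual consensus step couples all agents through $W$, one cannot argue about a single agent in isolation, so the induction must be run uniformly over the network via $M(t)$, and the hypothesis $\alpha(t)\eta\le1$ is exactly what turns the recursion into a genuine contraction toward $\sqrt{m}\,LR/\eta$; the rest is bookkeeping.
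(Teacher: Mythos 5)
Your proof is correct, and it reaches the stated bounds by a genuinely different route for the key step, namely the uniform bound on $\|\lambda_i(t)\|$. The paper works with \emph{squared} norms: it applies Jensen's inequality to the consensus average, then the Fenchel--Young inequality with a tunable parameter $\delta(t)$ to split $(1-\eta\alpha(t))\lambda_j(t)+\alpha(t)g(x_j(t))$, sums the resulting recursion over all agents using column-stochasticity of $W$, and closes it with the auxiliary product--sum inequality $\sum_{\ell}\alpha(\ell)\eta\prod_{k>\ell}(1-\alpha(k)\eta)\le 1$ (proved separately in an appendix), arriving at $\sum_{i}\|\lambda_i(t)\|^{2}\le nmL^{2}R^{2}/\eta^{2}$ and hence $\|\lambda_i(t)\|\le \sqrt{nm}\,LR/\eta$ for each agent. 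You instead run an unsquared recursion on the network maximum $M(t)=\max_j\|\lambda_j(t)\|$, using row-stochasticity of $W$ so that the consensus step cannot increase the maximum, and close it with a one-line induction that exploits $1-\alpha(t)\eta\in[0,1]$ directly. Your route is shorter, dispenses with the Fenchel--Young tuning and the auxiliary inequality entirely, and yields the tighter, $n$-independent bound $\|\lambda_i(t)\|\le \sqrt{m}\,LR/\eta$, which makes \eqref{Eq:Bound2} hold with room to spare (as you correctly note, the stated prefactor is simply looser than what you obtain). The only shared soft spot is the step $\|g(x)\|\le\sqrt{m}\,LR$ on $\ball_d(R)$, which follows from Lipschitz continuity only up to the value of $g$ at a reference point; the paper makes exactly the same claim, so this is not a gap you introduced.
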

\begin{proof}
See Appendix \ref{App:Proof}.
\end{proof}

\begin{lemma}
\label{Lemma:Consensus}
Consider the stepsize $\alpha(t)={\alpha\over {\sqrt{1+t}}}$ for some constant $\alpha\in \real_{+}$ and the regularization parameter $\eta$ that satisfies $\alpha(t)\eta\leq 1$ for all $t\in [T]$. Then, the consensus term is bounded by
\small \begin{align}
\label{Eq:consensus_bound}
&\sum_{t=0}^{T-1}\alpha(t)\|x_{i}(t)-x_{j}(t)\| \\ \nonumber
&\leq  5 L \left(1+\dfrac{nm^{2}LR}{\eta}\right)\left(\dfrac{\log(T\sqrt{nT})}{1-\sigma_{2}(W)}\right)^{3\over 2}\sum_{t=0}^{T-1}\alpha^{2}(t),
\end{align}\normalsize
for all $t\in [T]$, and for all $i,j\in V$.
\end{lemma}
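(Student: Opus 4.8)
The plan is to bound the pairwise difference $\|x_i(t)-x_j(t)\|$ by the deviation of each agent's primal variable from the network average $\bar{x}(t) \coloneqq \frac{1}{n}\sum_{\ell=1}^n x_\ell(t)$, since $\|x_i(t)-x_j(t)\| \le \|x_i(t)-\bar{x}(t)\| + \|x_j(t)-\bar{x}(t)\|$. So it suffices to control $\sum_{t=0}^{T-1}\alpha(t)\|x_i(t)-\bar{x}(t)\|$ for each fixed $i$. First I would unroll the consensus update in eq.~\eqref{Eq:That_is_the_way}. Writing the update without the projection gives an exact linear recursion $x_i(t+1) = \sum_j [W]_{ij} x_j(t) - \alpha(t)\sum_j[W]_{ij}\nabla_x L_j(\cdot) + (\text{projection error}_i(t))$, and after $t$ steps the contribution of the initial condition vanishes because $x_i(0)=0$. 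The key point is that the projection onto $\ball_d(R)$ is nonexpansive and, more importantly, the post-projection point stays in $\ball_d(R)$, so both $x_i(t)$ and $\bar{x}(t)$ lie in $\ball_d(R)$; one must check the projection-error terms can be absorbed (they are bounded in norm by the size of the gradient step, since the pre-projection point differs from a point already in the ball — here the doubly-stochastic average of points in the ball is itself in the ball — by at most $\alpha(t)\|\nabla_x L_j\|$).

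Next I would exploit the spectral-gap mixing of $W$. Using $W^t - \frac{1}{n}\mathbbm{1}\mathbbm{1}^T$ has operator norm $\sigma_2(W)^t$ (for symmetric doubly-stochastic $W$; in general use the singular-value bound stated after Assumption~\ref{Assumption:Weight_matrix}), the deviation $x_i(t)-\bar{x}(t)$ is a weighted sum $\sum_{s=0}^{t-1} \alpha(s)\, [\,(W^{t-s} - \tfrac1n\mathbbm{1}\mathbbm{1}^T)\,(\text{stacked gradients})\,]_i$ plus an analogous term for the accumulated projection errors. Taking norms and using Lemma~\ref{Lemma:GradientsBounds} to bound each $\|\nabla_x L_j(x_j(s),\lambda_j(s))\| \le L(1+ nm^2LR/\eta)$ — note $\|\lambda_j(s)\|$ has already been controlled to make this bound hold under $\alpha(s)\eta\le 1$ — I get
\begin{align*}
\|x_i(t)-\bar{x}(t)\| \;\lesssim\; L\Big(1+\tfrac{nm^2LR}{\eta}\Big)\sum_{s=0}^{t-1} \sigma_2(W)^{t-s-1}\,\alpha(s).
\end{align*}

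Then I would sum $\alpha(t)\|x_i(t)-\bar{x}(t)\|$ over $t$, swap the order of summation, and use $\sum_{\tau\ge 0}\sigma_2(W)^\tau = \frac{1}{1-\sigma_2(W)}$ to pull out one factor of $\frac{1}{1-\sigma_2(W)}$, leaving a $\sum_{t,s}$ over a product $\alpha(t)\alpha(s)\sigma_2(W)^{t-s-1}$. The standard trick here is $\alpha(s)\le \alpha(t)$ is false (stepsizes decrease, so $\alpha(s)\ge\alpha(t)$), so instead one bounds the inner geometric-weighted sum of $\alpha(s)$ by splitting the history at a horizon of order $\frac{\log(\text{something})}{1-\sigma_2(W)}$: for $s$ within that window of $t$, $\alpha(s)^2$ is comparable to $\alpha(t)^2$ up to the window length, and for $s$ far from $t$, $\sigma_2(W)^{t-s}$ is polynomially small. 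Carrying the constants yields the claimed $\big(\log(T\sqrt{nT})/(1-\sigma_2(W))\big)^{3/2}\sum_t\alpha^2(t)$, the $3/2$ power arising as one power from summing the geometric series and the extra $1/2$-power-with-log from the windowing/truncation estimate applied to $\alpha(s)=\alpha/\sqrt{1+s}$. The main obstacle I anticipate is precisely this last bookkeeping step: handling the convolution $\sum_s \sigma_2(W)^{t-s}\alpha(s)$ with a \emph{decreasing} (not constant) stepsize while keeping the logarithmic factors tight enough to match the stated bound, and simultaneously verifying that the accumulated projection errors obey the same geometric-mixing estimate as the gradient terms (this requires observing that each projection error is itself $O(\alpha(t))\cdot L(1+nm^2LR/\eta)$ in norm and then treating it exactly like a gradient term in the convolution).
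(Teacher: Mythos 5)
Your proposal is essentially correct and rests on the same two key ingredients as the paper's proof -- the geometric decay of the rows of $W^{t-r}$ toward uniformity at rate $\sigma_{2}(W)^{t-r}$ (Lemma \ref{Lemma:duchi}), and the splitting of the resulting convolution $\sum_{r}\sigma_{2}(W)^{t-r}\alpha(r)$ at a threshold $\tau=\Theta\bigl(\log(T\sqrt{nT})/(1-\sigma_{2}(W))\bigr)$, with the $\tau^{3/2}$ contribution from the in-window terms producing the $3/2$ power -- but it takes a different decomposition. The paper never introduces the network average $\bar{x}(t)$: it applies nonexpansiveness of $\Pi_{\ball_{d}(R)}$ directly to the \emph{pair} $x_{i}(t),x_{j}(t)$, so that $\|x_{i}(t)-x_{j}(t)\|\leq\sum_{\ell}|[W]_{i\ell}-[W]_{j\ell}|\,\|y_{\ell}(t-1)\|$, and the projection disappears from the recursion entirely; unrolling then yields a bound in terms of $\sum_{\ell}|[\Phi(t-1,r)]_{i\ell}-[\Phi(t-1,r)]_{j\ell}|$ with no projection-error terms to track (see eqs. \eqref{Eq:Subs}--\eqref{Eq:ExpressionCOMB}). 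Your route through $\|x_{i}(t)-\bar{x}(t)\|$ is the more standard consensus template, but it forces you to carry explicit projection errors because the average of projections is not the projection of the average; your observation that each such error has norm at most $\alpha(t)\max_{j}\|\nabla_{x}L_{j}\|$ (since the doubly stochastic average of points in $\ball_{d}(R)$ already lies in $\ball_{d}(R)$) is correct and closes that gap, so the argument goes through. The price is in the constants: the triangle inequality through $\bar{x}(t)$ and the doubled gradient-plus-projection-error contribution each cost a factor of $2$, so matching the paper's explicit constant $5$ in \eqref{Eq:consensus_bound} would require tighter bookkeeping than your sketch provides; as a rate statement, however, both routes give the same $\bigl(\log(T\sqrt{nT})/(1-\sigma_{2}(W))\bigr)^{3/2}\sum_{t}\alpha^{2}(t)$ scaling. (One further remark, not a fault of yours: the factor $1+nm^{2}LR/\eta$ you quote from Lemma \ref{Lemma:GradientsBounds} is what the lemma states, while the paper's own derivation in eq. \eqref{Eq:Wealsonotethat} actually produces the smaller $1+nm^{3/2}LR/\eta$; either serves as a valid upper bound.)
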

\begin{proof}
See Appendix \ref{Section:Proof_of_Consensus}.
\end{proof}

Now, consider the upper bound \eqref{Eq:Prop_inequality} in Lemma \ref{Lem:1}. We use the upper bounds \eqref{Eq:Bound1}-\eqref{Eq:Bound2} in Lemmas \ref{Lemma:GradientsBounds} as well as the consensus bound \eqref{Eq:consensus_bound} in Lemma \ref{Lemma:Consensus} to obtain
\small \begin{align}
\label{Eq:Simplified}
f(\widehat{x}_{i}(T))-f(x_{\ast})&\leq \dfrac{1}{\sum_{t=0}^{T-1}\alpha(t)}\Bigg[\dfrac{\|x_{\ast}\|^{2}}{2}+A \sum_{t=0}^{T-1}\alpha^{2}(t)\\ \nonumber &+\dfrac{1}{2n}\sum_{t=0}^{T-1}\sum_{i=1}^{n}\left(2\eta^{2}\alpha^{2}(t)-\eta\alpha(t)\right)\|\lambda_{i}(t)\|^{2}\Bigg],
 \end{align}\normalsize
where after some algebraic calculations, the constant $A$ is computed as,
\small \begin{align}
\label{Eq:CONSTANT_A}
A\coloneqq  mL^{2}R^{2}+8L^{2} \left(1+\dfrac{nm^{2}LR}{\eta}\right)^{2}\left(\dfrac{\log(T\sqrt{nT})}{1-\sigma_{2}(W)}\right)^{3\over 2}.
\end{align}\normalsize
We further restrict the stepsize and the regularization parameter to satisfy $\eta\alpha(t)\leq {1\over 2}$. In this case, we can drop the last term in the upper bound \eqref{Eq:Simplified} since it is non-positive. Hence,
\small \begin{align}
\label{Eq:SUBSUITUIE}
f(\widehat{x}_{i}(T))-f(x_{\ast})\leq \dfrac{1}{\sum_{t=0}^{T-1}\alpha(t)}\Bigg[\dfrac{R^{2}}{2}+A \sum_{t=0}^{T-1}\alpha^{2}(t)\Bigg],
\end{align}\normalsize
where we also used the fact that $\|x_{\ast}\|^{2}\leq R^{2}$. For the choice of the stepsize $\alpha(t)={R\over \sqrt{t+1}}$, the following lower and upper bounds hold
\small \begin{align}
\label{Eq:Stepsize_1_upper_bound}
&\sum_{t=0}^{T-1}\alpha(t)=\sum_{t=0}^{T-1}{R\over \sqrt{t+1}}\geq 2R(\sqrt{T}-1)\\ \label{Eq:Stepsize_2_upper_bound}
&\sum_{t=0}^{T-1}\alpha^{2}(t)=\sum_{t=0}^{T-1}{R^{2}\over t+1}\leq R^{2}(1+\log(T))\leq 5R^{2}\log(T),
\end{align}\normalsize
for all $T\geq 2$, respectively. Substituting the preceding bounds in eq. \eqref{Eq:SUBSUITUIE} gives us
\small \begin{align*}
f(\widehat{x}_{i}(T))-f(x_{\ast})&\leq \dfrac{R}{\sqrt{T}-1}\left[\dfrac{1}{4}+\dfrac{5}{2}A\log(T) \right], \quad T\geq 2.
\end{align*}\normalsize
Since $\log(T)\geq 1/4$ for $T\geq 2$, we can obtain the following expression 
\begin{align*}
f(\widehat{x}_{i}(T))-f(x_{\ast})&\leq \dfrac{R\log(T)}{\sqrt{T}-1}\left[1+\dfrac{5}{2}A \right], \quad T\geq 2.
\end{align*}
Defining $C\coloneqq 1+(5A/2)$ completes the proof. 
\subsection{Proof of Lemma \ref{Lemma:GradientsBounds}}
\label{App:Proof}
From Algorithm \ref{CHalgorithm}, recall the update rule for the Lagrangian multipliers $\lambda_{i}(t+1)$. Taking the norm of the vector $\lambda_{i}(t+1)$ results in
\small \begin{align}
\nonumber
&\|\lambda_{i}(t+1)\|= \left\|\Pi_{\real^{m}_{+}}\left(\sum_{j=1}^{n}[W]_{ij}\gamma_{j}(t)\right)\right\|\\ \label{Eq:Non-negative_Orthant}
&=\left\|\Pi_{\real_{+}^{m}}\left(\sum_{j=1}^{n}[W]_{ij}(\lambda_{j}(t)+\alpha(t)\nabla_{\lambda}L_{j}(x_{j}(t),\lambda_{j}(t))) \right)\right\|,
\end{align}\normalsize
where in the last equality, we substituted for $\gamma_{j}(t)$ from eq. \eqref{Eq:aux_2} in Step 1 of Algorithm \ref{CHalgorithm}. We use the non-expansive property of the projection to compute the following inequality from eq. \eqref{Eq:Non-negative_Orthant}, 
\small \begin{align*}
\|\lambda_{i}({t+1})\|
&\leq  \left\|\sum_{j=1}^{n}[W]_{ij}(\lambda_{j}(t)+\alpha(t)\nabla_{\lambda}L_{j}(x_{j}(t),\lambda_{j}(t)))\right\|\\
&\leq  \sum_{j=1}^{n}[W]_{ij}\left\|\lambda_{j}(t)+\alpha(t)\nabla_{\lambda}L_{j}(x_{j}(t),\lambda_{j}(t))\right\|,
\end{align*}\normalsize
where the last step follows by using the triangular inequality. We now square both sides of the preceding inequality to obtain
\small \begin{align}
\nonumber
&\|\lambda_{i}(t+1)\|^{2}\leq \left(\sum_{j=1}^{n}[W]_{ij}\left\|\lambda_{i}(t)+\alpha(t)\nabla_{\lambda}L_{i}(x_{i}(t),\lambda_{i}(t))\right\|\right)^{2}\\ \label{Eq:Oa}
&\hspace{10mm} \stackrel{\rm(a)}{\leq} \sum_{j=1}^{n}[W]_{ij}\left\|\lambda_{j}(t)+\alpha(t)\nabla_{\lambda}L_{j}(x_{j}(t),\lambda_{j}(t))\right\|^{2},
\end{align}\normalsize
where $\rm{(a)}$ follows from Jensen's inequality. Recall the definition of the sub-gradient $\nabla_{\lambda}L_{j}(x_{j}(t),\lambda_{j}(t))$ from eq. \eqref{Eq:sub_gradient_L2}. Substituting $\nabla_{\lambda}L_{j}(x_{j}(t),\lambda_{j}(t))$ in eq. \eqref{Eq:Oa} yields
\small \begin{align}
\nonumber
\|\lambda_{i}(t+1)\|^{2}&\leq \sum_{j=1}^{n}[W]_{ij}\left \|(1-\eta\alpha(t))\lambda_{j}(t)+\alpha(t) g(x_{j}(t))\right \|^{2}\\ 
\label{Eq:balance}
&\stackrel{\rm(b)}{\leq} \sum_{j=1}^{n}[W]_{ij}\big((1+\delta(t))(1-\eta\alpha(t))^{2}\|\lambda_{j}(t)\|^{2}\\ \nonumber &\hspace{16mm} +(1+\delta^{-1}(t))\alpha^{2}(t) \| g(x_{j}(t))\|^{2} \big),
\end{align}\normalsize
where in $\rm{(b)}$, we used the Fenchel-Young inequality which holds for any $\delta(t)>0$. Here, $\delta(t)$ is a degree of freedom that allows us to tighten the upper bound in eq. \eqref{Eq:balance} by balancing the two terms inside the parenthesis.

Taking the summation with respect to $i=1,2,\cdots,n$ results in
\small \begin{align}
\label{Eq:Note01}
\sum_{i=1}^{n}\|\lambda_{i}(t+1)\|^{2}\leq &(1+\delta(t))(1-\eta\alpha(t))^{2}\sum_{i=1}^{n}\|\lambda_{i}(t)\|^{2}\\ \nonumber & +(1+\delta^{-1}(t))nm\alpha^{2}(t)L^{2}R^{2},
 \end{align}\normalsize
where we use the fact  $\|g(x_{j}(t))\|^{2}\leq mL^{2}R^{2}$ due to Lipschitz continuity of Assumption \ref{Assumption:Lipschitz Functions}, and $\sum_{i=1}^{n}[W]_{ij}=1$ since the weight matrix $W$ is doubly stochastic by Assumption \ref{Assumption:Weight_matrix}. 

Now, suppose that the regularization parameter satisfies $\eta\alpha(t)\leq 1$ for all $t\in [T]$. In this case, we choose $\delta(t)=\dfrac{\varepsilon(t)}{(1-\alpha(t)\eta)^{2}}-1$, where $\varepsilon(t)\in ((1-\alpha(t)\eta)^{2},1)$. Substituting for $\delta(t)$ in eq. \eqref{Eq:Note01}, we obtain
\small \begin{align*}
\sum_{i=1}^{n}\|\lambda_{i}(t+1)\|^{2}\leq &\varepsilon(t)\sum_{i=1}^{n}\|\lambda_{i}(t)\|^{2}\\ &+\dfrac{\varepsilon(t)\alpha^{2}(t)}{\varepsilon(t)-(1-\alpha(t)\eta)^{2}}nmL^{2}R^{2}.
\end{align*}\normalsize
From this recursion and the fact that $\lambda_{i}(0)=0$ for all $i\in [n]$ in Algorithm \ref{CHalgorithm}, we obtain
\small \begin{align}
\nonumber
\sum_{i=1}^{n}\|\lambda_{i}(t+1)\|^{2}&\leq nmL^{2}R^{2}\sum_{\ell=0}^{t} \dfrac{\alpha^{2}(\ell)\prod_{k=\ell}^{t}\varepsilon(k)}{\varepsilon(\ell)-(1-\alpha(\ell)\eta)^{2}}\\ \nonumber
&\stackrel{\rm{(c)}}{\leq} \dfrac{nmL^{2}R^{2}}{\eta^{2}}\sum_{\ell=0}^{t}\alpha(\ell)\eta \prod_{k=\ell+1}^{t}(1-\alpha(\ell)\eta)\\ \label{eq:upper_bound}
&\stackrel{\rm{(d)}}{\leq} \dfrac{nmL^{2}R^{2}}{\eta^{2}},
 \end{align}\normalsize
where ${\rm{(c)}}$ follows by setting $\varepsilon(k)=(1-\alpha(k)\eta)$ for all $k=0,1,\cdots,t$, and ${\rm(d)}$ is due to the following inequality 
\small \begin{align}
\label{Eq:an_inequality_2}
\sum_{\ell=0}^{t}\alpha(\ell)\eta \prod_{k=\ell+1}^{t}(1-\alpha(k)\eta)\leq 1,
 \end{align}\normalsize
which holds when $\alpha(t)\eta\leq 1,\forall t\in [T]$. We defer the proof of the inequality \eqref{Eq:an_inequality_2} to Appendix \ref{App:aux_ineq}. 

Using the upper bound in eq. \eqref{eq:upper_bound}, we derive
\small \begin{align}
\nonumber
&\|\nabla_{x} L_{i}(x_{i}(t),\lambda_{i}(t))\|=  \left\|\nabla f_{i}(x_{i}(t))+\sum_{k=1}^{m}\lambda_{i,k}(t)\nabla g_{k}(x_{i}(t))\right\| \\ \nonumber
&\stackrel{\rm{(f)}}{\leq} \|\nabla f_{i}(x_{i}(t))\|+\sum_{k=1}^{m}\lambda_{i,k}(t)\|\nabla g_{k}(x_{i}(t))\| \\
&\stackrel{\rm{(g)}}{=} L\left(1+\|\lambda_{i}(t)\|_{1}\right)\\ \label{Eq:Wealsonotethat}
&\stackrel{\rm{(h)}}{\leq } L\left(1+\dfrac{nm^{3/2}LR}{\eta}\right),
 \end{align}\normalsize
where $\rm{(f)}$ follows from the triangle inequality, $\rm{(g)}$ follows by using the upper bounds on the sub-gradients $\|\nabla f_{j}(x_{i}(t))\|\leq L$ and $\|\nabla g_{k}(x_{i}(t))\| \leq L$ from Assumption \ref{Assumption:Lipschitz Functions}, and $\rm{(h)}$ follows by using the upper bound \eqref{eq:upper_bound} in conjunction with the following inequality between the $\ell_{1}$- and $\ell_{2}$-norms,
\begin{align}
\label{Eq:inequality_between_l1_and_l2}
\|\lambda_{i}(t)\|_{1}\leq \sqrt{m} \|\lambda_{i}(t)\|_{2}.
\end{align}

We prove the upper bound on $\|\nabla_{\lambda}L_{i}(x_{i}(t),\lambda_{i}(t))\|^{2}$ in Lemma \ref{Lemma:Consensus} as follows
\small \begin{align*}
\|\nabla_{\lambda}L_{i}(x_{i}(t),\lambda_{i}(t))\|^{2}&=\|g(x_{i}(t))-\eta \lambda_{i}(t)\|^{2}\\
&\leq (\|g(x_{i}(t))\|+\eta\|\lambda_{i}(t)\|)^{2}\\
&\stackrel{\rm{(f)}}{\leq} 2\|g(x_{i}(t))\|^{2}+2\eta^{2}\|\lambda_{i}(t)\|^{2}\\
&\stackrel{\rm{(g)}}{\leq} 2mL^{2}R^{2}+2\eta^{2}\|\lambda_{i}(t)\|^{2},
\end{align*}\normalsize
where ${\rm(f)}$ follows from the inequality $(a+b)^{2}\leq 2a^{2}+2b^{2}$, and ${\rm (g)}$ follows by Lipschitz continuity of each $g_{k}(\cdot)$ for $k\in [m]$ as well as the compactness of the feasible region $\mathcal{X}$. This completes the proof of Lemma \ref{Lemma:Consensus}.

In the sequel, we prove an alternative upper bound on $\|\nabla_{\lambda}L_{i}(x_{i}(t),\lambda_{i}(t))\|^{2}$ for later use in Appendix \ref{Proof of Theorem 3}. In particular, using eq. \eqref{eq:upper_bound} we compute
\small \begin{align}
\nonumber 
\|\nabla_{\lambda}L_{i}(x_{i}(t),\lambda_{i}(t))\|&= \|g(x_{i}(t))-\eta \lambda_{i}(t)\|\\ \nonumber
&\stackrel{\rm{(e)}}{\leq} \|g(x_{i}(t))\|+\eta\|\lambda_{i}(t)\|\\ \nonumber
&{\leq} \sqrt{m}LR+\eta \cdot \dfrac{\sqrt{nm}LR}{\eta}  \\ \label{Eq:BoundRef1}
&\leq  2LR\sqrt{nm},
 \end{align}\normalsize 
where ${\rm{(e)}}$ is due to the triangle inequality. Therefore,
\small \begin{align}
\label{Eq:Wealsonotethat_0}
\|\nabla_{\lambda}L_{i}(x_{i}(t),\lambda_{i}(t))\|^{2}\leq 4L^{2}R^{2}nm.
 \end{align}\normalsize

\subsection{Proof of Lemma \ref{Lemma:Consensus}}
\label{Section:Proof_of_Consensus}

The general plan to prove Lemma \ref{Lemma:Consensus} is to compute a recursion for the consensus term $\|x_{i}(t)-x_{j}(t)\|$. To compute such a recursion, we begin from the update rule in Algorithm \ref{CHalgorithm}, we have 
\small \begin{align}
\nonumber
&\|x_{i}(t)-x_{j}(t)\|\\ \nonumber &\hspace{-2mm} =\left\|\Pi_{\ball_{d}(R)}\left(\sum_{\ell=1}^{n}[W]_{i\ell}y_{\ell}(t-1)\right)-\Pi_{\ball_{d}(R)}\left(\sum_{\ell=1}^{n}[W]_{j\ell}y_{\ell}(t-1) \right)\right\| \\ \nonumber
&\stackrel{\rm{(a)}}{\leq} \left\|\sum_{\ell=1}^{n}([W]_{i\ell}-[W]_{j\ell})y_{\ell}(t-1)\right\| \\ \label{Eq:Subs}
&\stackrel{\rm(b)}{\leq} \sum_{\ell=1}^{n}|[W]_{i\ell}-[W]_{j\ell}|\cdot\left\|y_{\ell}(t-1)\right\|,
\end{align} \normalsize
where ${\rm{(a)}}$ follows by using the non-expansive property of the projection (cf. \cite[Chapter III.3]{hiriart1996convex}), and $\rm{(b)}$ follows by the triangle inequality. Based on the update rule \eqref{Eq:aux_1} in Algorithm \ref{CHalgorithm} and the triangle inequality, we derive
\small \begin{align}
\nonumber
&\|y_{\ell}(t-1)\|=\|x_{\ell}(t-1)+\alpha(t-1) \nabla_{x}L_{\ell}(x_{\ell}(t-1),\lambda_{\ell}(t-1))\|\\  \label{Eq:I_Keep_Coming}
&\leq \|x_{\ell}(t-1)\|+\alpha(t-1)\|\nabla_{x}L_{\ell}(x_{\ell}(t-1),\lambda_{\ell}(t-1))\|.
\end{align}\normalsize
Substituting the upper bound \eqref{Eq:I_Keep_Coming} in \eqref{Eq:Subs} gives us 
\small \begin{align}
\label{Eq:Ineq_COMB01}
&\|x_{i}(t)-x_{j}(t)\| \leq \sum_{\ell=1}^{n}|[W]_{i\ell}-[W]_{j\ell}|\cdot\|x_{\ell}(t-1)\| \\ \nonumber &+\alpha(t-1) \sum_{i=1}^{n}|[W]_{i\ell}-[W]_{j\ell}|\cdot\|\nabla_{x}L_{\ell}(x_{\ell}(t-1),\lambda_{\ell}(t-1))\|,
 \end{align}\normalsize
Further, for the first term in the r.h.s. of eq. \eqref{Eq:Ineq_COMB01}, we derive
\begin{align}
\nonumber
\|x_{\ell}(t-1)\|&=\left\|\Pi_{\ball_{d}(R)}\left(\sum_{m=1}^{n}[W]_{\ell m}y_{m}(t-2)\right)\right\| \\ \label{Eq:Higher_Gross}
&\stackrel{\rm{(d)}}{\leq }\left\|\sum_{m=1}^{n}[W]_{\ell m}y_{m}(t-2)\right\| ,
\end{align}
where ${\rm{(d)}}$ follows by the non-expansive property of the projection.  Using the triangle inequality in conjunction with the inequality \eqref{Eq:Higher_Gross}, we obtain the following inequality
\begin{align}
\nonumber
&\|x_{\ell}(t-1)\| \leq \sum_{m=1}^{n}[W]_{\ell m}\cdot\|y_{m}(t-2)\|\\ \label{Eq:Ineq_COMB02}
&\stackrel{\rm{(e)}}{\leq}  \sum_{m=1}^{n}[W]_{\ell m}\cdot\|x_{m}(t-2)\|\\ \nonumber  &+\alpha(t-1)\sum_{m=1}^{n}[W]_{\ell m}\cdot\|\nabla_{x} L_{m}(x_{m}(t-2),\lambda_{m}(t-2))\|,
 \end{align}\normalsize
where ${\rm{(e)}}$ follows by the triangle inequality.

Plugging \eqref{Eq:Ineq_COMB02} in eq. \eqref{Eq:Ineq_COMB01} yields
\small \begin{align}
\label{Eq:Pursue} 
&\|x_{i}(t)-x_{j}(t)\|\\ \nonumber &\leq \sum_{\ell=1}^{n}|[W^{2}]_{i\ell}-[W^{2}]_{j\ell}|\cdot\|x_{\ell}(t-2)\|\\ \nonumber &+\alpha(t-2)\sum_{\ell=1}^{n}|[W^{2}]_{i\ell}-[W^{2}]_{j\ell}|\cdot\|\nabla_{x}L_{\ell}(x_{\ell}(t-2),\lambda_{\ell}(t-2))\|\\ \nonumber &+\alpha(t-1) \sum_{\ell=1}^{n}|[W]_{i\ell}-[W]_{j\ell}|\cdot\|\nabla_{x}L_{\ell}(x_{\ell}(t-1),\lambda_{\ell}(t-1))\|.
 \end{align}\normalsize
Define the state transition matrix $\Phi(t-1,r)\coloneqq W^{t-r}$. Pursuing the recursive analysis of \eqref{Eq:Pursue} and using the state transition matrix $\Phi(t-1,r)$, yields a more compact form of inequality,
\small \begin{align}
\label{Eq:ExpressionCOMB}
&\|x_{i}(t)-x_{j}(t)\| \leq \sum_{r=0}^{t-1}\alpha(r)\\ \nonumber &\times \sum_{\ell=1}^{n}\left|[\Phi(t-1,r)]_{i\ell}-[\Phi(t-1,r)]_{j\ell}\right|\cdot \|\nabla_{x}L_{\ell}(x_{\ell}(r),\lambda_{\ell}(r))\|,
 \end{align}\normalsize
where in deriving eq. \eqref{Eq:ExpressionCOMB} we used the initial condition $\|x_{i}(0)\|=0,\forall i\in [n]$. We use the upper bound \eqref{Eq:Wealsonotethat} to bound the norm $\|\nabla_{x}L_{\ell}(x_{\ell}(r),\lambda_{\ell}(r))\|$ in eq. \eqref{Eq:ExpressionCOMB},
\small\begin{align}
\label{Eq:ExpressionCOMB1}
&\|x_{i}(t)-x_{j}(t)\|\leq  L\left(1+\dfrac{nm^{3/2}LR}{\eta}\right)\\ \nonumber &\times\sum_{r=0}^{t-1}\alpha(r)\sum_{\ell=1}^{n}\left|[\Phi(t-1,r)]_{k\ell}-[\Phi(t-1,r)]_{i\ell}\right|.
 \end{align}\normalsize
Now, from the definition of $\ell_{1}$-norm we have 
\begin{align*}
\sum_{\ell=1}^{n}|[\Phi(t-1,r)]_{i\ell}&-[\Phi(t-1,r)]_{j\ell}|\\ &=\|[\Phi(t-1,r)]_{i}-[\Phi(t-1,r)]_{j}\|_{1}.
\end{align*}
Therefore, eq. \eqref{Eq:ExpressionCOMB1} can be rewritten more compactly as follows
\small\begin{align}
\|x_{i}(t)-x_{j}(t)\|\leq & L\left(1+\dfrac{nm^{3/2}LR}{\eta}\right)\\ \nonumber &\times\sum_{r=0}^{t-1}\alpha(r)\|[\Phi(t-1,r)]_{i}-[\Phi(t-1,r)]_{j}\|_{1}.
 \end{align}\normalsize
Multiply and divide by $\alpha(t)$ to derive
\begin{align}
\nonumber
&\|x_{i}(t)-x_{j}(t)\| \leq L\left(1+\dfrac{nm^{3/2}LR}{\eta}\right)\\ \label{Eq:earlier_inequaltiy_build} &\times \alpha(t)\left[\sum_{r=0}^{t-1}\dfrac{\alpha(r)}{\alpha(t)}\|[\Phi(t-1,r)]_{i}-[\Phi(t-1,r)]_{j}\|_{1}\right]. 
 \end{align}\normalsize
In the sequel, we bound the term inside the bracket on the right hand side of eq. \eqref{Eq:earlier_inequaltiy_build}. To do so, we need the following lemma:
\begin{lemma} \textsc{(Duchi, \textit{et al.} \cite{duchi2012dual})}
\label{Lemma:duchi}
For all doubly stochastic matrices $W$, and $[\Phi(t-1,r)]=[W^{t-r}]$, the following bound holds
\begin{align*}
\|[\Phi(t-1,r)]_{i}-[\Phi(t-1,r)]_{j}\|_{2}
&\leq 2\sigma_{2}(W)^{t-r}.
\end{align*}
\end{lemma}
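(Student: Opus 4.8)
The plan is to reduce the claim to a statement about how the doubly stochastic matrix $W$ contracts vectors orthogonal to the all-ones vector. Write $[\Phi(t-1,r)]_i$, the $i$-th row of $W^{t-r}$, as $e_i^{T}W^{t-r}$ where $e_i\in\real^{n}$ is the $i$-th standard basis vector. Then
\begin{align*}
[\Phi(t-1,r)]_i-[\Phi(t-1,r)]_j=(e_i-e_j)^{T}W^{t-r},
\end{align*}
so, transposing, it suffices to bound $\|(W^{T})^{t-r}v\|_2$ with $v\coloneqq e_i-e_j$. The key observation is that $v$ is orthogonal to $\mathbbm{1}_{n}$, since its entries sum to zero; if $i=j$ the bound is trivial, so assume $i\neq j$, in which case $\|v\|_2=\sqrt{2}$.

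Next I would show that $W^{T}$ maps the hyperplane $\mathbbm{1}_{n}^{\perp}$ into itself and contracts it by a factor at most $\sigma_2(W)$. Invariance is immediate: if $\langle v,\mathbbm{1}_{n}\rangle=0$, then $\langle W^{T}v,\mathbbm{1}_{n}\rangle=\langle v,W\mathbbm{1}_{n}\rangle=\langle v,\mathbbm{1}_{n}\rangle=0$ by double stochasticity. For the contraction factor, note that $\|W\|_{\mathrm{op}}=\sigma_1(W)=1$ because $W$ is row-stochastic (so $\|W\|_\infty=1$) and column-stochastic (so $\|W\|_1=1$), hence $\|W\|_2\le 1$, while $W\mathbbm{1}_{n}=\mathbbm{1}_{n}$ forces $\|W\|_2\ge 1$. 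Consequently $W^{T}W\mathbbm{1}_{n}=W^{T}\mathbbm{1}_{n}=\mathbbm{1}_{n}$, so $\mathbbm{1}_{n}$ is a top eigenvector of the symmetric positive semidefinite matrix $W^{T}W$ with eigenvalue $1=\sigma_1^{2}(W)$; by the spectral theorem the remaining eigenvectors lie in $\mathbbm{1}_{n}^{\perp}$ with eigenvalues $\sigma_2^{2}(W)\ge\cdots\ge\sigma_n^{2}(W)$. Therefore $\|Wu\|_2\le\sigma_2(W)\|u\|_2$ for every $u\in\mathbbm{1}_{n}^{\perp}$, and the identical argument applied to $WW^{T}$ (whose singular values equal those of $W$) gives $\|W^{T}u\|_2\le\sigma_2(W)\|u\|_2$ on $\mathbbm{1}_{n}^{\perp}$.

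Finally I would iterate: starting from $v\in\mathbbm{1}_{n}^{\perp}$, each of the $t-r$ applications of $W^{T}$ keeps the vector in $\mathbbm{1}_{n}^{\perp}$ and shrinks its $\ell_2$-norm by at least $\sigma_2(W)$, so $\|(W^{T})^{t-r}v\|_2\le\sigma_2(W)^{t-r}\|v\|_2=\sqrt{2}\,\sigma_2(W)^{t-r}\le 2\sigma_2(W)^{t-r}$, which is the claimed bound (the slack between $\sqrt{2}$ and $2$ is harmless). The one step that deserves care — and is really the heart of the argument — is the identification of the contraction factor on $\mathbbm{1}_{n}^{\perp}$ with $\sigma_2(W)$: this hinges on verifying that $\mathbbm{1}_{n}/\sqrt{n}$ is simultaneously a left and right singular vector of $W$ associated with the largest singular value $1$, which is exactly where double stochasticity (rather than mere stochasticity) is used; without it the operator norm of $W$ restricted to $\mathbbm{1}_{n}^{\perp}$ need not coincide with the second singular value.
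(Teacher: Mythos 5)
Your proof is correct and complete. The paper itself gives no argument for this lemma --- it is stated with an attribution to Duchi \emph{et al.}\ \cite{duchi2012dual} and used as a black box --- so there is no in-paper proof to compare against; what you have written is a valid self-contained derivation. The two pillars of your argument both check out: (i) $e_{i}-e_{j}\in\mathbbm{1}_{n}^{\perp}$ and $W^{T}$ preserves $\mathbbm{1}_{n}^{\perp}$ because $W\mathbbm{1}_{n}=\mathbbm{1}_{n}$; (ii) since $\sigma_{1}(W)=1$ (via $\|W\|_{2}^{2}\leq\|W\|_{1}\|W\|_{\infty}=1$ together with $W\mathbbm{1}_{n}=\mathbbm{1}_{n}$) and $WW^{T}\mathbbm{1}_{n}=\mathbbm{1}_{n}$, the vector $\mathbbm{1}_{n}/\sqrt{n}$ is a top eigenvector of the symmetric matrix $WW^{T}$, so the spectral theorem gives $\|W^{T}u\|_{2}\leq\sigma_{2}(W)\|u\|_{2}$ for all $u\in\mathbbm{1}_{n}^{\perp}$ --- and this remains valid even when the eigenvalue $1$ of $WW^{T}$ has multiplicity greater than one, since then $\sigma_{2}(W)=1$ and the bound is trivial. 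Iterating and using $\|e_{i}-e_{j}\|_{2}=\sqrt{2}$ in fact yields the sharper constant $\sqrt{2}$ in place of $2$. Your closing remark correctly isolates where double stochasticity (as opposed to mere row-stochasticity) enters: it is needed both to make $\mathbbm{1}_{n}^{\perp}$ invariant under $W^{T}$ and to identify the restricted operator norm with $\sigma_{2}(W)$.
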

Now, consider the stepsize $\alpha(t)={\alpha \over \sqrt{t+1}}$ for some constant $\alpha\in \real_{+}$. In this case, the stepsize ratio is ${\alpha(r)\over \alpha(t)}={\sqrt{{t+1\over r+1}}}\leq \sqrt{T}$, where the inequality follows from the fact that $t\in \{0,1,\cdots,T-1\}$ and $r\in \{0,1,\cdots,t-1\}$. Based on Lemma \ref{Lemma:duchi}, we compute the following upper bound
\begin{align}
\nonumber
\dfrac{\alpha(r)}{\alpha(t)}&\|[\Phi(t-1,r)]_{i}-[\Phi(t-1,r)]_{j}\|_{1}\\ &\leq 
 \sqrt{Tn}\|[\Phi(t-1,r)]_{i}-[\Phi(t-1,r)]_{j}\|_{2}\\ \label{Eq:white}
 &\leq \sqrt{Tn}\cdot (\sigma_{2}(W))^{t-r},
\end{align}
where in the first inequality, we used the fact that $\|x\|_{1}\leq \sqrt{n}\|x\|_{2}$ for any vector $x\in \real^{n}$.
Based on the upper bound we derived in eq. \eqref{Eq:white}, we observe that the following inequality holds
\begin{align}
\label{Eq:UpperBoundForFirst}
\dfrac{\alpha(r)}{\alpha(t)}\|[\Phi(t-1,r)]_{i}-[\Phi(t-1,r)]_{j}\|_{1}\leq \dfrac{1}{T},
\end{align}
if we have that
\begin{align}
\label{Eq:definitionOfThreshold}
t-r\geq \dfrac{\log(T\sqrt{nT})}{\log(\sigma_{2}(W))^{-1}}\coloneqq \tau.
\end{align}\normalsize
In the case that $t-r<\tau$, we simply use the following trivial bound
\begin{align}
\nonumber
\dfrac{\alpha(r)}{\alpha(t)}\|[\Phi(t-1,r)]_{i}-[\Phi(t-1,r)]_{j}\|_{1}&\leq 2\dfrac{\alpha(r)}{\alpha(t)}\\ \label{Eq:UpperBoundForSecond}
&=2\sqrt{\dfrac{t+1}{r+1}},
\end{align}
where the inequality follows by using the triangle inequality 
\begin{align*}
\|[\Phi(t-1,r)]_{i}&-[\Phi(t-1,r)]_{j}\|_{1}\\ 
&\leq \|[\Phi(t-1,r)]_{i}\|_{1}+[\Phi(t-1,r)]_{j}\|_{1},
\end{align*}
and the fact that $[\Phi(t-1,r)]$ is a doubly stochastic matrix which implies $\|[\Phi(t-1,r)]_{i}\|_{1}=1$ for all $i\in [n]$. 

Now, to bound the sum inside the bracket of eq. \eqref{Eq:earlier_inequaltiy_build}, we break the sum into two terms using $\tau$ as the threshold. We obtain that
\begin{align}
\nonumber
\sum_{r=0}^{t-1}\dfrac{\alpha(r)}{\alpha(t)}&\|[\Phi(t-1,r)]_{i}-[\Phi(t-1,r)]_{j}\|_{2}\\ \nonumber &=\sum_{r=0}^{t-\tau}\dfrac{\alpha(r)}{\alpha(t)}\|[\Phi(t-1,r)]_{i}-[\Phi(t-1,r)]_{j}\|_{2}\\ \nonumber
&+\sum_{r=t-\tau+1}^{t-1}\dfrac{\alpha(r)}{\alpha(t)}\|[\Phi(t-1,r)]_{i}-[\Phi(t-1,r)]_{j}\|_{2}\\ \label{Eq:Last_Term_Of_Ineq}
&\stackrel{{\rm{(g)}}}{\leq} \dfrac{t-\tau+1}{T}+2 \sum_{r=t-\tau+1}^{t-1} \sqrt{\dfrac{t+1}{r+1}},
\end{align}
where in ${\rm{(g)}}$, we used the inequality \eqref{Eq:UpperBoundForFirst} for the first sum and  \eqref{Eq:UpperBoundForSecond} for the second.

To compute an upper bound for the last term of the inequality \eqref{Eq:Last_Term_Of_Ineq}, we state the following lemma:
\begin{lemma}
\label{Lemma:Inequality}
For a given $\tau\in \mathbb{N}$, and for all $t\geq \tau-1$, the following inequality holds
\begin{align}
\label{Eq:Ineq_Lef_tau_12}
\sum_{r=t-\tau+1}^{t-1} \sqrt{{t+1\over r+1}}\leq \tau^{3/2}.
\end{align}
\end{lemma}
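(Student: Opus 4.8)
The plan is to bound the sum in \eqref{Eq:Ineq_Lef_tau_12} term by term, exploiting the monotonicity of the summand in $r$. First I would note that the index $r$ runs over $\{t-\tau+1,t-\tau+2,\dots,t-1\}$, a set of exactly $\tau-1$ integers, and that the hypothesis $t\geq \tau-1$ guarantees $r+1\geq t-\tau+2\geq 1$ for every such $r$; hence each summand $\sqrt{(t+1)/(r+1)}$ is well defined and positive (when $\tau=1$ the sum is empty and the claim is trivial).

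Next, since $r\mapsto \sqrt{(t+1)/(r+1)}$ is non-increasing, the largest summand occurs at the left endpoint $r=t-\tau+1$, so
\begin{align*}
\sqrt{\dfrac{t+1}{r+1}}\leq \sqrt{\dfrac{t+1}{t-\tau+2}},\qquad r=t-\tau+1,\dots,t-1.
\end{align*}
It then remains to estimate the right-hand side uniformly in $t$. Writing $s\coloneqq t-\tau+2$, the assumption $t\geq \tau-1$ gives $s\geq 1$, and $t+1=s+(\tau-1)$, so
\begin{align*}
\dfrac{t+1}{t-\tau+2}=1+\dfrac{\tau-1}{s}\leq 1+(\tau-1)=\tau.
\end{align*}
Therefore each of the $\tau-1$ terms is at most $\sqrt{\tau}$, and summing yields
\begin{align*}
\sum_{r=t-\tau+1}^{t-1}\sqrt{\dfrac{t+1}{r+1}}\leq (\tau-1)\sqrt{\tau}\leq \tau^{3/2},
\end{align*}
which is \eqref{Eq:Ineq_Lef_tau_12}.

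This argument involves no real difficulty; the only points requiring care are the index bookkeeping --- verifying that the sum contains $\tau-1$ rather than $\tau$ terms and that the worst case is the smallest index $r$ --- together with the elementary inequality $(\tau-1)/s\leq \tau-1$ for $s\geq 1$. A sharper estimate (for instance via comparison with $\int (r+1)^{-1/2}\,\D r$) is possible but unnecessary, since the bound $\tau^{3/2}$ already suffices for the consensus estimate in Lemma \ref{Lemma:Consensus}.
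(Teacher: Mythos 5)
Your proof is correct, but it takes a genuinely different and more elementary route than the paper's. The paper proves \eqref{Eq:Ineq_Lef_tau_12} by first splitting off the factor $\sqrt{t+1}$, bounding the remaining sum $\sum_{r=t-\tau+1}^{t-1}(r+1)^{-1/2}$ by comparison with the integral $\int_{0}^{\tau-2}\bigl(x+(t-\tau+2)\bigr)^{-1/2}\,\D x$, evaluating that integral, and then using the inequality $(1+x)^{1/2}\leq 1+x/2$ to reduce the result to $\tau/\sqrt{t-\tau+2}$; multiplying back by $\sqrt{t+1}$ and maximizing $\sqrt{(t+1)/(t-\tau+2)}$ over $t\geq \tau-1$ gives $\tau^{3/2}$. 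You instead bound each of the $\tau-1$ summands by the largest one, $\sqrt{(t+1)/(t-\tau+2)}\leq\sqrt{\tau}$, obtaining $(\tau-1)\sqrt{\tau}\leq\tau^{3/2}$ directly. Your argument is shorter, avoids the integral comparison and the binomial estimate entirely, and even yields the marginally sharper constant $(\tau-1)\sqrt{\tau}$; the index bookkeeping ($\tau-1$ terms, worst case at the left endpoint, $t-\tau+2\geq 1$) is handled correctly, including the degenerate case $\tau=1$. The integral comparison in the paper could in principle exploit the decay of the summand across the range of $r$, but since the paper's final step maximizes over $t$ at $t=\tau-1$ anyway, nothing is gained there, and the two approaches land on essentially the same bound. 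Either proof suffices for the consensus estimate in Lemma \ref{Lemma:Consensus}.
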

\begin{proof}
See Appendix \ref{App:Proof_of_Sum_tau}.
\end{proof}

We use the inequality \eqref{Eq:Ineq_Lef_tau_12} to upper bound \eqref{Eq:Last_Term_Of_Ineq} as below
\begin{align}
\nonumber
\sum_{r=0}^{t-1}\dfrac{\alpha(r)}{\alpha(t)}&\|[\Phi(t-1,r)]_{i}-[\Phi(t-1,r)]_{j}\|_{2}\\ \nonumber &\leq \dfrac{t-\tau+1}{T}+2\tau^{3/2}\\ \nonumber
&\stackrel{{\rm{(h)}}}{\leq}  1+2\left(\dfrac{\log(T\sqrt{nT})}{\log(\sigma_{2}(W))^{-1}}\right)^{3\over 2}\\ \nonumber
&\stackrel{{\rm{(i)}}}{\leq}   1+2\left(\dfrac{\log(T\sqrt{nT})}{1-\sigma_{2}(W)}\right)^{3\over 2}\\ \label{Eq:ReplaceIn}
&\stackrel{{\rm{(j)}}}{\leq}   5\left(\dfrac{\log(T\sqrt{nT})}{1-\sigma_{2}(W)}\right)^{3\over 2},
\end{align}\normalsize
where in ${\rm{(h)}}$ we used the fact that $t\in \{0,1,\cdots,T-1\}$ and thus $(t-\tau+1)/T\leq 1$, and subsuited the value of $\tau$ from eq. \eqref{Eq:definitionOfThreshold}. In addition, in ${\rm{(i)}}$ we used the fact that $\log(x)^{-1}\geq 1-x$, and ${\rm{(j)}}$ follows by the fact that $3\left({{\log(T\sqrt{nT})\over 1-\sigma_{2}(W)}}\right)^{3\over 2}\geq 1$ for all $T\geq 2$ and for all $n\in \mathbb{N}$.

We substitute eq. \eqref{Eq:ReplaceIn} in eq. \eqref{Eq:earlier_inequaltiy_build} which gives us
\small \begin{align}
\label{Eq:earlier_inequaltiy_build1}
\|x_{i}(t)-x_{j}(t)\|
\leq 5 L \left(1+\dfrac{nm^{3/2}LR}{\eta}\right)\left(\dfrac{\log(T\sqrt{nT})}{1-\sigma_{2}(W)}\right)^{3\over 2} \alpha(t).
\end{align}\normalsize
Lastly, we use the bound in eq. \eqref{Eq:earlier_inequaltiy_build1} to bound the consensus term 
\small \begin{align}
&\sum_{t=0}^{T-1}\alpha(t)\|x_{i}(t)-x_{j}(t)\|\\ \nonumber
&\leq  5 L \left(1+\dfrac{nm^{3/2}LR}{\eta}\right)\left(\dfrac{\log(T\sqrt{nT})}{1-\sigma_{2}(W)}\right)^{3\over 2} \sum_{t=0}^{T-1}\alpha^{2}(t).
\end{align}\normalsize

\subsection{Proof of Theorem \ref{Thm:3}}
\label{Proof of Theorem 3}

We start from our earlier result in eq. \eqref{Eq:after_convex} in Appendix \ref{Proof of Proposition 1}, which we repeat here,
\begin{align}
\nonumber
&\dfrac{1}{n} \sum_{t=0}^{T-1}\sum_{i=1}^{n}2\alpha(t)\Big(L_{i}(x_{i}(t),\lambda_{i}(t))-L_{i}(x_{\ast},\lambda_{i}(t))\Big)\\
\label{Eq:after_convex_earlier-B1}
&\leq \|x_{\ast}\|^{2}+\dfrac{1}{n}\sum_{t=0}^{T-1}\sum_{i=1}^{n}\alpha^{2}(t)\|\nabla_{x}L_{i}(x_{i}(t),\lambda_{i}(t))\|^{2}.
 \end{align}\normalsize
Moreover, from eq. \eqref{Eq:dual_ref_point_rewrite} in Appendix \ref{App:aux_Proof_of_Lemma}, we have the following inequality
\small \begin{align}
\nonumber
&\dfrac{1}{n}\sum_{t=0}^{T-1}\sum_{i=1}^{n}2\alpha(t)\left(L_{i}(x_{i}(t),\lambda)-L_{i}(x_{i}(t),\lambda_{i}(t))\right)\\ \label{Eq:dual_ref_point_rewrite_B2}  &\leq  \|\lambda\|^{2}+\dfrac{1}{n}\sum_{t=0}^{T-1}\sum_{i=1}^{n}\alpha^{2}(t)\|\nabla_{\lambda}L_{i}(x_{i}(t),\lambda_{i}(t))\|^{2},
 \end{align}\normalsize
for all $\lambda\in \real^{m}_{+}$.  Combining the inequalities in eqs. \eqref{Eq:after_convex_earlier-B1} and \eqref{Eq:dual_ref_point_rewrite_B2} yields
\small \begin{align}
\nonumber
&\dfrac{1}{n}\sum_{t=0}^{T-1}\sum_{i=1}^{n}2\alpha(t)\Big(L_{i}(x_{i}(t),\lambda)-L_{i}(x_{\ast},\lambda_{i}(t))\Big)\leq \|x_{\ast}\|^{2}+\|\lambda\|^{2}\\ \nonumber
&+\dfrac{1}{n}\sum_{t=0}^{T-1}\sum_{i=1}^{n}\alpha^{2}(t)\|\nabla_{x}L_{i}(x_{i}(t),\lambda_{i}(t))\|^{2}\\ \label{Eq:ConstraintViolationBound} &+ \dfrac{1}{n}\sum_{t=0}^{T-1}\sum_{i=1}^{n}\alpha^{2}(t)\|\nabla_{\lambda}L_{i}(x_{i}(t),\lambda_{i}(t))\|^{2}.
 \end{align}\normalsize
Recall the definition of the Lagrangian function from \eqref{Eq:Lagrangian_function},
\begin{align*}
L_{i}(x,\lambda)=f_{i}(x)+\langle \lambda,g(x) \rangle-\dfrac{\eta}{2}\|\lambda \|^{2}.
\end{align*}
We expand the Lagrangian functions on the left hand side of eq. \eqref{Eq:ConstraintViolationBound},
\small \begin{align}
\nonumber
&\dfrac{1}{n}\sum_{t=0}^{T-1}\sum_{i=1}^{n}2\alpha(t)\left(f_{i}(x_{i}(t))-f_{i}(x_{\ast})\right)\\ \nonumber &+\dfrac{1}{n}\sum_{t=0}^{T-1}\sum_{i=1}^{n}2\alpha(t)\left(\langle \lambda,g(x_{i}(t))\rangle-\langle \lambda_{i}(t),g(x_{\ast})\rangle\right)\\ \label{Eq:63} &+\dfrac{1}{n}\sum_{t=0}^{T-1}\sum_{i=1}^{n}\eta \alpha(t)(\|\lambda_{i}(t)\|^{2}-\|\lambda\|^{2})  \leq \text{r.h.s. of \eqref{Eq:ConstraintViolationBound}}. 
 \end{align}\normalsize
Now, we notice that $-\langle  \lambda_{i}(t),g(x_{\ast})\rangle\geq 0$ since $\lambda_{i}(t)\succeq 0$ and $g(x_{\ast})\preceq 0$ for an optimal point $x_{\ast}\in \mathcal{X}$ of the problem \eqref{Eq:emperical_1}-\eqref{Eq:emperical_2}. Furthermore, $\|\lambda_{i}(t)\|^{2}\geq 0$ is non-negative. By eliminating these two non-negative terms from the left hand side of \eqref{Eq:63}, we obtain that
\small \begin{align}
\nonumber
&\dfrac{1}{n}\sum_{t=0}^{T-1}\sum_{i=1}^{n}2\alpha(t)\left(f_{i}(x_{i}(t))-f_{i}(x_{\ast})\right)\\ \nonumber
&+\dfrac{1}{n}\sum_{t=0}^{T-1}\sum_{i=1}^{n}2\alpha(t)\langle \lambda,g(x_{i}(t))\rangle-\eta \|\lambda\|^{2}\sum_{t=0}^{T-1} \alpha(t)\\   &\leq \|x_{\ast}\|^{2}+\|\lambda\|^{2}+\dfrac{1}{n}\sum_{t=0}^{T-1}\sum_{i=1}^{n}\alpha^{2}(t)\|\nabla_{x}L_{i}(x_{i}(t),\lambda_{i}(t))\|^{2}\\ \nonumber
&\hspace{23mm} +\dfrac{1}{n}\sum_{t=0}^{T-1}\sum_{i=1}^{n}\alpha^{2}(t)\|\nabla_{\lambda}L_{i}(x_{i}(t),\lambda_{i}(t))\|^{2}.
 \end{align}\normalsize
We now move the quadratic term $\|\lambda\|^{2}$ from the right hand side to the left hand side of the inequality,
\small \begin{align}
\nonumber
&\dfrac{1}{n}\sum_{t=0}^{T-1}\sum_{i=1}^{n}2\alpha(t)\left(f_{i}(x_{i}(t))-f_{i}(x_{\ast})]\right)\\ \nonumber &+\left[\dfrac{1}{n}\sum_{t=0}^{T-1}\sum_{i=1}^{n}2\alpha(t)\langle \lambda,g(x_{i}(t))\rangle-\|\lambda\|^{2}\Big(1+\eta\sum_{t=0}^{T-1} \alpha(t)\Big)\right]\\ \nonumber &  \leq \|x_{\ast}\|^{2}
+\dfrac{1}{n}\sum_{t=0}^{T-1}\sum_{i=1}^{n}\alpha^{2}(t)\|\nabla_{x}L_{i}(x_{i}(t),\lambda_{i}(t))\|^{2}\\  \label{Eq:overbook}
&\hspace{12mm}+\dfrac{1}{n}\sum_{t=0}^{T-1}\sum_{i=1}^{n}\alpha^{2}(t)\|\nabla_{\lambda}L_{i}(x_{i}(t),\lambda_{i}(t))\|.
 \end{align}\normalsize
We now divide both sides of the preceding inequality by ${1\over 2\sum_{t=0}^{T-1}\alpha(t)}$. Due to the convexity condition (Assumption \ref{Assumption:Lipschitz Functions}) of $f_{i}(\cdot),\forall i\in [n]$ and $g_{k}(\cdot),\forall k\in [m]$ and the definition $\widehat{x}_{i}(T)$ in eq. \eqref{Eq:Construction_of_x_hat}, we compute
\small \begin{align}
\nonumber
&\dfrac{1}{n}\sum_{i=1}^{n}\left(f_{i}(\widehat{x}_{i}(T))-f_{i}(x_{\ast})\right)\\ \label{Eq:Inequalit} &+\left[\dfrac{1}{n}\sum_{i=1}^{n}\langle \lambda,g(\widehat{x}_{i}(T))\rangle-\|\lambda\|^{2}\Big(\dfrac{1}{2\sum_{t=0}^{T-1}\alpha(t)}+\dfrac{\eta}{2}\Big)\right]\\ \nonumber &  \leq \dfrac{1}{\sum_{t=0}^{T-1}\alpha(t)}\Big(\|x_{\ast}\|^{2}
+\dfrac{1}{n}\sum_{t=0}^{T-1}\sum_{i=1}^{n}\alpha^{2}(t)\|\nabla_{x}L_{i}(x_{i}(t),\lambda_{i}(t))\|^{2}\\ \nonumber
&\hspace{31mm} +\dfrac{1}{n}\sum_{t=0}^{T-1}\sum_{i=1}^{n}\alpha^{2}(t)\|\nabla_{\lambda}L_{i}(x_{i}(t),\lambda_{i}(t))\|^{2}\Big).
 \end{align}\normalsize
Since the vector $\lambda=(\lambda_{1},\cdots,\lambda_{m})\in \real^{m}_{+}$ is arbitrary, we can maximize the terms inside the bracket in the l.h.s. with respect to each element $\lambda_{k},k\in [m]$,
\small \begin{align}
\label{Eq:Tired}
&\max_{\lambda\in \real_{+}^{m}}\left[\dfrac{1}{n}\sum_{i=1}^{n}\langle \lambda,g(\widehat{x}_{i}(T))\rangle-\|\lambda\|^{2}\Big(\dfrac{1}{2\sum_{t=0}^{T-1}\alpha(t)}+\dfrac{\eta}{2}\Big)\right]\\ \nonumber &=\dfrac{1}{\left({2\eta}+{2\over \sum_{t=0}^{T-1}\alpha(t)}\right)} \sum_{k=1}^{m}\left[\dfrac{1}{n}\sum_{i=1}^{n}g_{k}(\widehat{x}_{i}(T)) \right]_{+}^{2}.
\end{align}\normalsize
Substituting eq. \eqref{Eq:Tired} into eq. \eqref{Eq:Inequalit} gives us
\small \begin{align}
\label{Eq:Inequality}
&\dfrac{1}{n}\sum_{i=1}^{n}\left(f_{i}(\widehat{x}_{i}(T))-f_{i}(x_{\ast})\right)\\ \nonumber &+\dfrac{1}{\left({2\eta}+{2\over \sum_{t=0}^{T-1}\alpha(t)}\right)} \sum_{k=1}^{m}\left[\dfrac{1}{n}\sum_{i=1}^{n}g_{k}(\widehat{x}_{i}(T)) \right]_{+}^{2} \leq \text{r.h.s. of \eqref{Eq:Inequalit}}. 
\end{align}\normalsize
We now use the upper bounds in eqs. \eqref{Eq:Wealsonotethat},\eqref{Eq:Wealsonotethat_0} to upper bound the sub-gradients on the right hand side, and use the fact that $\|x_{\ast}\|^{2}\leq R^{2}$ since $x_{\ast}\in \ball_{d}(R)$. After some calculations, we derive that
\small \begin{align}
\nonumber
&\dfrac{1}{n}\sum_{i=1}^{n}\left(f_{i}(\widehat{x}_{i}(T))-f_{i}(x_{\ast})\right)\\ \nonumber &+\dfrac{1}{\left({2\eta}+{2\over \sum_{t=0}^{T-1}\alpha(t)}\right)} \sum_{k=1}^{m}\left[\dfrac{1}{n}\sum_{i=1}^{n}g_{k}(\widehat{x}_{i}(T))\right]_{+}^{2}\\ \label{Eq:SUB0} &\leq  \dfrac{1}{\sum_{t=0}^{T-1}\alpha(t)} \left(R^{2}
+A\sum_{t=0}^{T-1}\alpha^{2}(t)\right),
 \end{align}\normalsize
where we recall the definition of the constant $A$ from eq. \eqref{Eq:CONSTANT_A}.

From the description of Algorithm \ref{CHalgorithm}, we note that $\widehat{x}_{i}(T)\in \ball_{d}(R)$. Since $\ball_{d}(R)$ contains the feasible set, \textit{i.e.}, $\mathcal{X}\subseteq \ball_{d}(R)$ two scenarios may occur:

\begin{itemize}
\item[(\textit{i})] ${1\over n}\sum_{i=1}^{n}\left(f_{i}(\widehat{x}_{i}(T))-f_{i}(x_{\ast})\right)\geq 0$: In this case, we simply eliminate this term from the left hand side of eq. \eqref{Eq:SUB0}.

\item[(\textit{ii})] ${1\over n}\sum_{i=1}^{n}\left(f_{i}(\widehat{x}_{i}(T))-f_{i}(x_{\ast})\right)\leq 0$: This case can occur since the output $\widehat{x}_{i}(T)\in \ball_{d}(R)$ of Algorithm \ref{CHalgorithm} belongs to a larger set compared to an optimal solution $x_{\ast}\in \mathcal{X}\subseteq \ball_{d}(R)$. Therefore, the value of the function at $\widehat{x}_{i}(T)$ can be smaller than that of an optimal solution.
\end{itemize}

To take both cases into account, we define the function $\mathcal{F}(T)$ as below
\begin{align}
\label{Eq:SUB1}
\mathcal{F}(T)\coloneqq -\dfrac{1}{n}\sum_{i=1}^{n}\left(f_{i}(\widehat{x}_{i}(T))-f_{i}(x_{\ast})\right).
\end{align}  
Clearly, the absolute value of the function in \eqref{Eq:SUB1} is finite, \textit{i.e.}, $|\mathcal{F}(T)|<\infty$ for all $T=0,1,2,\cdots$ since each local function $f_{j}(\cdot)$ is defined on a compact set $\ball_{d}(R)$ and it is Lipschitz continuous. Let $\mathcal{F}_{+}(T)\coloneqq \max \{0,\mathcal{F}(T)\}$. In Case (\textit{i}), we have $\mathcal{F}(T)\leq 0$ and thus $\mathcal{F}_{+}(T)$ vanishes. Further, in Case (\textit{ii}), $\mathcal{F}_{+}(T)=\mathcal{F}(T)$. Using the definition of $\mathcal{F}_{+}(T)$ in conjunction with eq. \eqref{Eq:SUB0} gives us
\small \begin{align}
\nonumber
&\dfrac{1}{\left({2\eta}+{2\over \sum_{t=0}^{T-1}\alpha(t)}\right)} \sum_{k=1}^{m}\left[\dfrac{1}{n}\sum_{i=1}^{n}g_{k}(\widehat{x}_{i}(T))\right]_{+}^{2}\\ &\leq  \mathcal{F}_{+}(T)+\dfrac{1}{\sum_{t=0}^{T-1}\alpha(t)} \left(R^{2}
+A\sum_{t=0}^{T-1}\alpha^{2}(t)\right),
\end{align}\normalsize
whence we derive
\small \begin{align}
\nonumber
&\left\|\left[\dfrac{1}{n}\sum_{i=1}^{n}g(\widehat{x}_{i}(T))\right]_{+}\right\|^{2}\leq {2\left(\eta+\dfrac{1}{\sum_{t=0}^{T-1}\alpha(t)}\right)} \mathcal{F}_{+}(T) \\  \label{Eq:sss} &+\dfrac{2\left(\eta\sum_{t=0}^{T-1}\alpha(t)+1\right)}{(\sum_{t=0}^{T-1}\alpha(t))^{2}} \left(2R^{2}
+A\sum_{t=0}^{T-1}\alpha^{2}(t)\right),
\end{align}\normalsize
where $g\coloneqq (g_{1},\cdots,g_{m})$.

To compute an asymptotic bound, note that $\sum_{t=0}^{T-1}\alpha(t)=\Omega(\sqrt{T})$ and $\sum_{t=0}^{T-1}\alpha^{2}(t)=\mathcal{O}(\log(T))$ for the stepsize $\alpha(t)=\alpha/\sqrt{t+1}$. Moreover, $\mathcal{F}_{+}(T)=\mathcal{O}(1)$ as $\mathcal{F}_{+}(T)<\infty$. Therefore, we obtain from \eqref{Eq:sss} that
\small \begin{align}
\left\|\left[\dfrac{1}{n}\sum_{i=1}^{n}g(\widehat{x}_{i}(T))\right]_{+}\right\|^{2}_{2}=\mathcal{O}(\eta).
\end{align}\normalsize

We now prove the second part of Theorem \ref{Thm:3}. To prove \eqref{Eq:SecondConstraintViolationBound}, in the sequel we show that when the inequality constraints are satisfied strictly, we indeed have $\mathcal{F}_{+}(T)=\mathcal{O}(\log(T)/\sqrt{T})$. To prove this result, we recall the following inequality from eq. \eqref{Eq:App+sub},
\small \begin{align}
\hspace{-4mm} f_{i}(x_{j}(t))\rangle \leq f_{i}(x_{i}(t))+\langle \nabla f_{i}(x_{j}(t)),x_{j}(t)-x_{i}(t)\rangle.
 \end{align}\normalsize
Using the Cauchy-Schwarz inequality as well as the Lipschitz bound  $\|\nabla f_{i}(x_{j}(t))\|\leq L$ in Assumption \ref{Assumption:Lipschitz Functions} gives us
 \begin{align*}
\hspace{-4mm} f_{i}(x_{j}(t))\rangle &\leq f_{i}(x_{i}(t))+\langle \nabla f_{i}(x_{j}(t)),x_{j}(t)-x_{i}(t)\rangle \\
&\leq f_{i}(x_{i}(t))+\|\nabla f_{i}(x_{j}(t))\|\|x_{j}(t)-x_{i}(t)\|\\
&\leq f_{i}(x_{i}(t))+L\|x_{j}(t)-x_{i}(t)\|.
 \end{align*}
We multiply both sides of the inequality by $\alpha(t)$ and subsequently take the sum over $t=0,1,\cdots,T-1$,
\begin{align*}
\sum_{t=0}^{T-1}\alpha(t)f_{i}(x_{j}(t))\leq \sum_{t=0}^{T-1}\alpha(t)&f_{i}(x_{i}(t))\\ &+L\sum_{t=0}^{T-1}\alpha(t)\|x_{j}(t)-x_{i}(t)\|.
\end{align*}
Divide both sides of the preceding inequality by ${1\over \sum_{t=0}^{T-1}\alpha(t)}$ and use the definition of $\widehat{x}_{i}(T)$ in eq. \eqref{Eq:Construction_of_x_hat} in conjunction with the inequality \eqref{Eq:divide_and_use_2} to derive
\begin{align*}
f_{i}(\widehat{x}_{j}(T))\leq f_{i}(\widehat{x}_{i}(T))+\dfrac{L}{\sum_{t=0}^{T-1}\alpha(t)} \sum_{t=0}^{T-1}\alpha(t)\|x_{j}(t)-x_{i}(t)\|.
\end{align*}
Equivalently, by subtracting $f_{i}(x_{\ast})$ from both sides of the inequality and then reversing the sign, we obtain
\begin{align}
\label{Eq:Now_Got_it}
-(f_{i}(\widehat{x}_{i}(T))-f_{i}(x_{\ast}&)) \leq -(f_{i}(\widehat{x}_{j}(T)-f_{i}(x_{\ast}))\\ \nonumber &+\dfrac{L}{\sum_{t=0}^{T-1}\alpha(t)} \sum_{t=0}^{T-1}\alpha(t)\|x_{j}(t)-x_{i}(t)\|.
\end{align}
From the inequality \eqref{Eq:consensus_bound} of Lemma \ref{Lemma:Consensus} with the stepsize $\alpha(t)={\alpha\over \sqrt{t+1}}$, the consensus term is bounded by
\begin{align}
\dfrac{L}{\sum_{t=0}^{T-1}\alpha(t)} \sum_{t=0}^{T-1}\alpha(t)\|x_{j}(t)-x_{i}(t)\|=\mathcal{O}\left({\log(T)\over \sqrt{T}}\right).
\end{align}
Therefore, the inequality \eqref{Eq:Now_Got_it} becomes
\small \begin{align}
\label{Eq:Now_Got_it_1}
-(f_{i}(\widehat{x}_{i}(T))-f_{i}(\widehat{x}_{\ast}&)) \leq -(f_{i}(\widehat{x}_{j}(T))-f_{i}(\widehat{x}_{\ast}))+\mathcal{O}\left({\log(T)\over \sqrt{T}}\right).
\end{align}\normalsize
We use \eqref{Eq:Now_Got_it_1} to upper bound $\mathcal{F}(T)$ in eq. \eqref{Eq:SUB1}. In particular
\small \begin{align}
 \label{Eq:Remove}
\mathcal{F}(T)&\leq -\dfrac{1}{n}\sum_{i=1}^{n}(f_{i}(\widehat{x}_{j}(T))-f_{i}(x_{\ast}))+\mathcal{O}\left({\log(T)\over \sqrt{T}}\right). 
\end{align}\normalsize
We now argue that the first term of the upper bound \eqref{Eq:Remove} is non-positive and hence can be dropped. To do so, we prove a lemma first:
\begin{lemma}
\label{Lemma:03}
Recall the definition of the cumulative cost function $f(x)\coloneqq {1\over n}\sum_{i=1}^{n} f_{i}(x)$ and suppose the optimal solution of the minimization problem in \eqref{Eq:empirical_risk_form}-\eqref{Eq:empirical_risk_form_1} satisfies the inequality constraints strictly. That is, $g(x_{\ast})\prec 0$. Then,
\begin{align}
\label{Eq:Main_Inequality_here}
f(x)-f(x_{\ast})\geq 0, 
\end{align}
for all $x\in \ball_{d}(R)$.
\end{lemma}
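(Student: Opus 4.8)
The plan is to use the hypothesis $g(x_\ast)\prec 0$ to show that $x_\ast$ lies in the topological interior of the feasible set $\mathcal{X}$, and then to upgrade the fact that $x_\ast$ minimizes $f$ over $\mathcal{X}$ into the statement that it minimizes $f$ over all of $\ball_{d}(R)$, using convexity of $f$. No Lagrangian machinery or quantitative estimate is needed; the whole point is that a strictly feasible optimal solution is an unconstrained (ball-constrained) minimizer.

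First I would record the interiority claim. Each $g_{k}$ is convex and finite on $\ball_{d}(R)$ (Assumption \ref{Assumption:Lipschitz Functions}), hence continuous there; since $g_{k}(x_\ast)<0$ for every $k\in[m]$ by assumption, there exists $\varepsilon>0$ such that $g_{k}(x)<0$ for all $k\in[m]$ and all $x$ with $\|x-x_\ast\|\leq\varepsilon$. In other words the Euclidean ball $B(x_\ast,\varepsilon)\coloneqq\{x:\|x-x_\ast\|\leq\varepsilon\}$ is contained in $\mathcal{X}$, and (recalling $\mathcal{X}\subseteq\ball_{d}(R)$) also in $\ball_{d}(R)$. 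Since $x_\ast$ is by definition the optimal solution of $\min_{x\in\mathcal{X}}f(x)$, it satisfies $f(x_\ast)\leq f(x)$ for all $x\in B(x_\ast,\varepsilon)$.

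Next I would conclude by a one-line convexity argument. The function $f=\tfrac1n\sum_{i=1}^{n}f_{i}$ is convex on $\ball_{d}(R)$ because each $f_{i}$ is (Assumption \ref{Assumption:Lipschitz Functions}). Fix an arbitrary $x\in\ball_{d}(R)$ and pick $\theta\in(0,1]$ small enough that $\|\theta(x-x_\ast)\|\leq\varepsilon$; then the point $x_\theta\coloneqq(1-\theta)x_\ast+\theta x$ lies in $\ball_{d}(R)$ (convexity of the ball) and in $B(x_\ast,\varepsilon)\subseteq\mathcal{X}$, so by optimality of $x_\ast$ and convexity of $f$, $f(x_\ast)\leq f(x_\theta)\leq(1-\theta)f(x_\ast)+\theta f(x)$. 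Cancelling $(1-\theta)f(x_\ast)$ from both sides and dividing by $\theta>0$ yields $f(x_\ast)\leq f(x)$, i.e. $f(x)-f(x_\ast)\geq 0$, which is exactly \eqref{Eq:Main_Inequality_here}. Equivalently, one may observe that $x_\ast$ is a local minimizer of the convex function $f$ at an interior point of its domain, so $0\in\partial f(x_\ast)$, and the subgradient inequality $f(x)\geq f(x_\ast)+\langle 0,x-x_\ast\rangle$ gives the same conclusion.

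There is essentially no obstacle of substance here; the only point requiring (mild) care is the passage from $g(x_\ast)\prec 0$ to the existence of a genuine Euclidean ball around $x_\ast$ inside $\mathcal{X}$, which rests solely on continuity of the $g_{k}$ on $\ball_{d}(R)$, together with the inclusion $\mathcal{X}\subseteq\ball_{d}(R)$ so that such a neighborhood of $x_\ast$ also sits inside $\ball_{d}(R)$ where $f$ is convex.
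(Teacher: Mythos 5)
Your proof is correct, but it takes a genuinely different route from the paper's. The paper argues through the Karush--Kuhn--Tucker conditions at a primal--dual optimal pair $(x_{\ast},\lambda_{\ast})$: complementary slackness $\lambda_{\ast,k}\,g_{k}(x_{\ast})=0$ combined with the hypothesis $g_{k}(x_{\ast})<0$ forces $\lambda_{\ast,k}=0$ for every $k\in[m]$, so the stationarity condition collapses to $\langle \xi,x-x_{\ast}\rangle\geq 0$ for all $\xi\in\partial f(x_{\ast})$ and all $x\in\ball_{d}(R)$, and the subgradient inequality for the convex function $f$ then yields $f(x)-f(x_{\ast})\geq 0$. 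You never introduce dual variables: continuity of the $g_{k}$ (finite convex functions on $\real^{d}$ are continuous, and on $\ball_{d}(R)$ they are in fact $L$-Lipschitz by Assumption \ref{Assumption:Lipschitz Functions}) converts $g(x_{\ast})\prec 0$ into a full Euclidean ball $B(x_{\ast},\varepsilon)\subseteq\mathcal{X}$, and the standard segment argument showing that a local minimizer of a convex function is a global one finishes the job. The two arguments encode the same geometric fact --- at a strictly feasible optimum no constraint is active, so $0\in\partial f(x_{\ast})$ --- but yours is self-contained and elementary, avoiding any appeal to the existence of KKT multipliers (which in the paper implicitly rests on the Slater condition of Assumption 2 as a constraint qualification and is quoted as a black box from \cite{boyd2004convex}), at the cost of a slightly longer topological preamble. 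Both are complete and valid proofs of \eqref{Eq:Main_Inequality_here}.
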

\begin{proof}
See Appendix \ref{App:Proof_of_Lemma_03}.
\end{proof}
We use the inequality \eqref{Eq:Main_Inequality_here} in Lemma \ref{Lemma:03} with $x=\widehat{x}_{j}(T)$ to derive
\begin{align*}
\dfrac{1}{n}\sum_{i=1}^{n}(f_{i}(\widehat{x}_{j}(T))-f_{i}(x_{\ast}))=f(\widehat{x}_{j}(T))-f(x_{\ast}))\geq 0.
\end{align*}
Consequently, the sum in eq. \eqref{Eq:Remove} is non-positive. We thus remove the sum from  eq. \eqref{Eq:Remove},
\begin{align*}
\mathcal{F}(T)=\mathcal{O}\left({\log(T)\over \sqrt{T}}\right),
\end{align*}
and hence $\mathcal{F}_{+}(T)=\max\{0,\mathcal{F}(T)\}=\mathcal{O}\left({\log(T)/\sqrt{T}}\right)$.

Recalling that $\sum_{t=0}^{T-1}\alpha(t)=\Omega(\sqrt{T})$ and $\sum_{t=0}^{T-1}\alpha^{2}(t)=\mathcal{O}(\log(T))$, we conclude from eq. \eqref{Eq:sss} that
\small \begin{align*}
\left\|\left[\dfrac{1}{n}\sum_{i=1}^{n}g(\widehat{x}_{i}(T))\right]_{+}\right\|_{2}^{2}=\mathcal{O}\left({\eta\log(T)\over \sqrt{T}}\right).
\end{align*}\normalsize

\subsection{Proof of Theorem \ref{Thm:High_Probability_Bound}}
\label{Proof_of_High_Probability}

Analogous to the derivation in eq. \eqref{Eq:Comb01} for the deterministic algorithm, for any realization of random variables $\{x_{i}(t),\lambda_{i}(t),K_{i}(t)\}_{t=1}^{T}$ it can be shown that for an optimal solution $x_{\ast}\in \mathcal{X}$ the following inequality holds,
\small \begin{align}
\label{Eq:Comb011}
&\sum_{t=0}^{T-1}\sum_{i=1}^{n}2\alpha(t)\langle\nabla_{x}\widehat{L}_{i}(x_{i}(t),\lambda_{i}(t);K_{i}(t)),x_{i}(t)-x_{\ast}\rangle\\ \nonumber &\leq  n\|x_{\ast}\|^{2}+\sum_{t=0}^{T-1}\sum_{i=1}^{n}\alpha^{2}(t)\|\nabla_{x}\widehat{L}_{i}(x_{i}(t),\lambda_{i}(t);K_{i}(t))\|^{2}.
\end{align}\normalsize
We define the estimation error $e_{i}(t)$ of the stochastic sub-gradient as follows
\begin{align}
\label{Eq:PuttingTogether}
e_{i}(t)\coloneqq \nabla_{x} L_{i}(x_{i}(t),\lambda_{i}(t))-\nabla_{x}\widehat{L}_{i}(x_{i}(t),\lambda_{i}(t);K_{i}(t)).
\end{align}
By putting together the inequality \eqref{Eq:Comb011} and the definition of the estimation error in eq. \eqref{Eq:PuttingTogether}, we obtain
\begin{align}
\nonumber
&\sum_{t=0}^{T-1}\sum_{i=1}^{n}2\alpha(t)\langle\nabla_{x}L_{i}(x_{i}(t),\lambda_{i}(t)), x_{i}(t)-x_{\ast}\rangle  \\ \label{Eq:137-1} &\leq n\|x_{\ast}\|^{2}+\sum_{t=1}^{T}\sum_{i=1}^{n} \|\nabla_{x}\widehat{L}_{i}(x_{i}(t),\lambda_{i}(t);K_{i}(t))\|^{2}\\ \nonumber &+\sum_{t=1}^{T}\sum_{i=1}^{n}2\alpha(t)\langle e_{i}(t), x_{i}(t)-x_{\ast}\rangle.
\end{align}
Further, using the same approach resulting in eq. \eqref{Eq:dual_ref_point}, we derive
\begin{align}
\label{Eq:138-1}
&\sum_{t=0}^{T-1}\sum_{i=1}^{n}2\alpha(t)\langle\nabla_{\lambda}\widehat{L}_{i}(x_{i}(t),\lambda_{i}(t)),\lambda-\lambda_{i}(t)\rangle \\ \nonumber &\leq  \sum_{i=1}^{n}\|\lambda\|^{2}+\sum_{t=0}^{T-1}\sum_{i=1}^{n}\alpha^{2}(t)\|\nabla_{\lambda}\widehat{L}_{i}(x_{i}(t),\lambda_{i}(t))\|^{2}.
\end{align}
Since the deterministic and stochastic sub-gradient with respect to the dual variables coincide $\nabla_{\lambda}L_{i}(x_{i}(t),\lambda_{i}(t))=\nabla_{\lambda}\widehat{L}_{i}(x_{i}(t),\lambda_{i}(t))$ (cf. \eqref{Eq:Unbiased_Estimator01}), we have
\begin{align}
\label{Eq:138-1}
&\sum_{t=0}^{T-1}\sum_{i=1}^{n}2\alpha(t)\langle\nabla_{\lambda}L_{i}(x_{i}(t),\lambda_{i}(t)),\lambda-\lambda_{i}(t)\rangle \\ \nonumber &\leq  \sum_{i=1}^{n}\|\lambda\|^{2}+\sum_{t=0}^{T-1}\sum_{i=1}^{n}\alpha^{2}(t)\|\nabla_{\lambda}L_{i}(x_{i}(t),\lambda_{i}(t))\|^{2}.
\end{align}

Thus, by following the steps \eqref{Eq:011}-\eqref{Eq:Replacing_the_bound_tt} of the proof of Lemma \ref{Lem:1} in Appendix \ref{Proof of Proposition 1} for the stochastic primal-dual algorithm algorithm, the following inequality can be shown,
\footnotesize \begin{align}
\nonumber
&f(\widehat{x}_{k}(T))-f(x_{\ast})\leq \dfrac{1}{\sum_{t=0}^{T-1}\alpha(t)}\Bigg[\dfrac{1}{2} \|x_{\ast}\|^{2}-\dfrac{\eta}{2n}\sum_{t=0}^{T-1}\sum_{i=1}^{n}\alpha(t)\|\lambda_{i}(t)\|^{2}\\ \nonumber
&+\dfrac{L}{n} \sum_{t=0}^{T-1}\sum_{i=1}^{n} \alpha(t) \|x_{k}(t)-x_{i}(t)\|+\dfrac{1}{n}\sum_{t=0}^{T-1}\sum_{i=1}^{n}\alpha(t)\langle e_{i}(t),x_{i}(t)-x_{\ast} \rangle \\ \nonumber
&+\dfrac{1}{2n}\sum_{t=0}^{T-1}\sum_{i=1}^{n}\alpha^{2}(t)\|\nabla_{x}\widehat{L}_{i}(x_{i}(t),\lambda_{i}(t);K_{i}(t))\|^{2}\\ \label{Eq:Long_Equation_Stochastic_Setting}
&+\dfrac{1}{2n}\sum_{t=0}^{T-1}\sum_{i=1}^{n}\alpha^{2}(t)\|\nabla_{\lambda}L_{i}(x_{i}(t),\lambda_{i}(t))\|^{2}\Bigg].
\end{align}\normalsize
The upper bound on the consensus term is similar to the deterministic primal-dual method, see eq. \eqref{Eq:consensus_bound} in Lemma \ref{Lemma:Consensus}. Moreover, similar to eq. \eqref{Eq:Bound1} in Lemma \ref{Lemma:GradientsBounds}, we have
\begin{align}
\|\nabla_{\lambda}L_{i}(x_{i}(t),\lambda_{i}(t))\|^{2}&\leq 2mL^{2}R^{2}+2\eta^{2}\|\lambda_{i}(t)\|^{2}.
\end{align}
We compute an upper bound for the sub-gradient with respect to the primal variable as follows
\small \begin{align}
\nonumber
\|\nabla_{x}\widehat{L}_{i}(x_{i}(t),&\lambda_{i}(t);K_{i}(t))\|\\  \nonumber
&=\left\|\nabla f_{i}(x_{i}(t))+\|\lambda_{i}(t)\|_{1}\cdot \nabla g_{K_{i}(t)}(x_{i}(t))\right\|\\ \nonumber
&\stackrel{\rm{(a)}}{\leq}  \left\|\nabla f_{i}(x_{i}(t))\right\|+\|\lambda_{i}(t)\|_{1} \| \nabla g_{K_{i}(t)}(x_{i}(t))\|\\ \nonumber
&\stackrel{\rm(b)} \leq L(1+\|\lambda_{i}(t)\|_{1})\\ \nonumber
&\stackrel{\rm(c)} \leq L(1+\sqrt{m}\|\lambda_{i}(t)\|),\\
&\stackrel{\rm(d)} \leq L\left(1+\dfrac{nm^{3/2}LR}{\eta}\right),
\end{align}\normalsize
where ${\rm (a)}$ follows by the triangle inequality, ${\rm (b)}$ follows from upper bound on the sub-gradients $\left\|\nabla f_{i}(x_{i}(t))\right\|$ and $\| \nabla g_{K_{i}(t)}(x_{i}(t))\|$ in Assumption \ref{Assumption:Lipschitz Functions}, ${\rm (c)}$ holds due to the inequality , and ${\rm(d)}$ follows from the upper bound \eqref{eq:upper_bound} on $\|\lambda_{i}(t)\|$ which also holds in the stochastic settings. Therefore,
\begin{align*}
\|\nabla_{x}\widehat{L}_{i}(x_{i}(t),\lambda_{i}(t);K_{i}(t))\|^{2}\leq L^{2}\left(1+\dfrac{nm^{3/2}LR}{\eta}\right)^{2}.
\end{align*}
We now obtain a high probability bound for the estimation error term $\sum_{t=0}^{T-1}\sum_{i=1}^{n}\langle e_{i}(t),x_{i}(t)-x_{\ast} \rangle$ in eq. \eqref{Eq:Long_Equation_Stochastic_Setting}. First, recall from eq. \eqref{Eq:Unbiased_Estimator} that the stochastic sub-gradient is an unbiased estimator of the deterministic sub-gradient, \textit{i.e.},
\small \begin{align}
\label{Eq:Rewrite}
\nabla_{x}L_{i}(x_{i}(t),\lambda_{i}(t))&\coloneqq \expect [\nabla_{x} \widehat{L}_{i}(x_{i}(t),\lambda_{i}(t);K_{i}(t))| \mathfrak{F}_{t}].
\end{align} \normalsize
Based on the definition of the estimation error in eq. \eqref{Eq:PuttingTogether}, we rewrite \eqref{Eq:Rewrite} as below
\begin{align}
\label{Eq:where_the_last}
\expect [e_{i}(t)|\mathfrak{F}_{t}]=0.
\end{align}
Consequently, since $x_{i}(t)$ is $\mathfrak{F}_{t}$-measurable and by the iterative law of the expectation, we can write
\small \begin{align}
\nonumber
\expect\big[\langle e_{i}(t), x_{i}(t)-x_{\ast}\rangle\big]&=\expect\big[\langle e_{i}(t), x_{i}(t)-x_{\ast} \rangle\big]\\  \nonumber
&=\expect\big[\expect\big[\langle e_{i}(t), x_{i}(t)-x_{\ast} \rangle\big] \big| \mathfrak{F}_{t}\big]\\ \nonumber
&=\expect\big[\langle \expect\big[ e_{i}(t) \big| \mathfrak{F}_{t}\big], x_{i}(t)-x_{\ast} \rangle \big]\\  \label{Eq:BoundedDifference}
&=0,
\end{align}\normalsize
where the last equality follows from \eqref{Eq:where_the_last}.

Moreover, using the Cauchy-Schwarz inequality, we can obtain that
\begin{align}
\label{Eq:BoundedDifference1}
\langle e_{i}(t),& x_{i}(t)-x_{\ast} \rangle \\ \nonumber &\leq \|e_{i}(t) \|\cdot \| x_{i}(t)-x_{\ast}\|\leq 2R\|e_{i}(t)\|,
\end{align}
where we used the fact that $\|x_{i}(t)-x_{\ast}\|\leq 2R$ as $x_{i}(t),x_{\ast}\in \ball_{d}(R)$. Moreover, by expanding the norm of the estimation error $\|e_{i}(t)\|$, we derive
\begin{align}
\nonumber
&\|e_{i}(t)\|\coloneqq \|\nabla_{x} L_{i}(x_{i}(t),\lambda_{i}(t))-\nabla_{x}\widehat{L}_{i}(x_{i}(t),\lambda_{i}(t);K_{i}(t))\|\\ \nonumber
&=\Big\|\sum_{k=1}^{m}\lambda_{i,k}(t)\nabla g_{k}(x_{i}(t))-\|\lambda_{i}(t)\|_{1}\nabla g_{K_{i}(t)}(x_{i}(t))\Big\|\\ \nonumber
&\leq \sum_{k=1}^{m}\lambda_{i,k}(t)\big\|\nabla g_{k}(x_{i}(t))\big\|+\|\lambda_{i}(t)\|_{1}\big\|\nabla g_{K_{i}(t)}(x_{i}(t))\big\|\\ \nonumber
&\leq 2L\|\lambda_{i}(t)\|_{1} \\ \label{Eq:From_error}
&\leq 2\sqrt{m}L\|\lambda_{i}(t)\|,
\end{align}
where the last inequality follows from \eqref{Eq:inequality_between_l1_and_l2}. Now, it is easy to see that the upper bound \eqref{eq:upper_bound} on $\|\lambda_{i}(t)\|$ which also holds in the stochastic settings. Therefore, from eq. \eqref{Eq:From_error}, we obtain 
\begin{align}
\label{Eq:Comb_02}
\|e_{i}(t)\|\leq \dfrac{2nm^{3/2}L^{2}R}{\eta}.
\end{align}
Combining eqs. \eqref{Eq:BoundedDifference1} and \eqref{Eq:Comb_02} gives us
\begin{align}
\langle e_{i}(t),x_{i}(t)-x_{\ast} \rangle\leq \dfrac{4nm^{3/2}L^{2}R^{2}}{\eta}.
\end{align}
From Jensen's inequality and the preceding inequality, we obtain that
\begin{align}
\nonumber
&\Big(\dfrac{1}{n}\sum_{i=1}^{n}\alpha(t) \langle e_{i}(t), x_{i}(t)-x_{\ast} \rangle\Big)^{2}\\ &\hspace{20mm} \leq \dfrac{1}{n}\sum_{i=1}^{n} \Big(\alpha(t) \langle e_{i}(t), x_{i}(t)-x_{\ast}\rangle\Big)^{2} \\
&\hspace{20mm} \leq \dfrac{16n^{2}m^{3}L^{4}R^{4}}{\eta^{2}}\alpha^{2}(t).
\end{align}
Applying the Azuma-Hoeffding inequality \cite{chung2006concentration} yields the tail bound,
\begin{align*}
&\Prob\left[\dfrac{1}{n}\sum_{t=0}^{T-1}\sum_{i=1}^{n}\alpha(t) \langle e_{i}(t),x_{i}(t)-x_{\ast}\rangle\geq \delta \right]\\
&\leq \exp\left(-\dfrac{\delta^{2}\eta^{2}}{32n^{2}m^{3}L^{4}R^{4}\sum_{t=0}^{T-1}\alpha^{2}(t)}\right). 
\end{align*}
Hence, with the probability of at least $1-\varepsilon$, the following inequality holds
\small \begin{align}
\nonumber
\dfrac{1}{n}\sum_{t=0}^{T-1}\sum_{i=1}^{n}&\alpha(t)\langle e_{i}(t),x_{i}(t)-x_{\ast}\rangle \\ \label{Eq:COMB011-L} &\leq  \dfrac{4nm^{3/2}L^{2}R^{2}}{\eta} \sqrt{2{\log{1\over \varepsilon} \sum_{t=0}^{T-1}\alpha^{2}(t) }}.
\end{align}\normalsize
Using the stepsize $\alpha(t)={R\over \sqrt{t+1}}$ in conjunction with the inequality \eqref{Eq:Stepsize_2_upper_bound}, in turn gives us the following upper bound
\begin{align}
\nonumber
&\dfrac{1}{n}\sum_{t=0}^{T-1}\sum_{i=1}^{n}\alpha(t)\langle e_{i}(t),x_{i}(t)-x_{\ast}\rangle \\ \label{Eq:High_Probability_Bound_1} &\hspace{20mm} \leq  \dfrac{4nm^{3/2}L^{2}R^{3}}{\eta} \sqrt{10{\log{1\over \varepsilon}}\cdot \log(T) }.
\end{align}
We substitute the preceding inequality in eq. \eqref{Eq:Long_Equation_Stochastic_Setting} to obtain the high probability bound, using the steps of the proof in Appendix \ref{App:TheRestofTheProof}. Choosing $\varepsilon=1/T$ in eq. \eqref{Eq:High_Probability_Bound_1} yields the high probability bound in Theorem \ref{Thm:High_Probability_Bound}. 

To prove the second part of Theorem \ref{Thm:High_Probability_Bound}, we take the expectation of both sides of inequality \eqref{Eq:Long_Equation_Stochastic_Setting} and use the fact that the expectation of the estimation error term is zero due to eq. \eqref{Eq:BoundedDifference}.

\section{Auxiliary Results}
\subsection{Proof of Inequality \eqref{Eq:an_inequality_2}}
\label{App:aux_ineq}
\begin{lemma}
Suppose $\alpha(t)\eta\leq 1$ for all $t\in [T]$. Then,
\begin{align*}
\sum_{\ell=0}^{t}\alpha(\ell)\eta\prod_{k=\ell+1}^{t}(1-\alpha(k)\eta)\leq 1.
\end{align*}
\end{lemma}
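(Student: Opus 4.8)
Set $a_{k}\coloneqq \alpha(k)\eta$, so that by hypothesis $a_{k}\in[0,1]$ for all $k\in[T]$. The plan is to prove the stronger exact identity
\begin{align}
\label{Eq:telescope_identity}
\sum_{\ell=0}^{t}a_{\ell}\prod_{k=\ell+1}^{t}(1-a_{k})=1-\prod_{k=0}^{t}(1-a_{k}),
\end{align}
(with the usual convention that an empty product equals $1$), from which the claimed bound follows immediately: since each factor $1-a_{k}$ is non-negative, the right-hand side of \eqref{Eq:telescope_identity} is at most $1$.

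The identity \eqref{Eq:telescope_identity} is established by induction on $t$. For the base case $t=0$, the left-hand side is $a_{0}$ and the right-hand side is $1-(1-a_{0})=a_{0}$. For the inductive step, assume \eqref{Eq:telescope_identity} holds with $t-1$ in place of $t$. Splitting off the $\ell=t$ term from the sum and factoring $(1-a_{t})$ out of the remaining terms gives
\begin{align*}
\sum_{\ell=0}^{t}a_{\ell}\prod_{k=\ell+1}^{t}(1-a_{k})
&=a_{t}+(1-a_{t})\sum_{\ell=0}^{t-1}a_{\ell}\prod_{k=\ell+1}^{t-1}(1-a_{k})\\
&=a_{t}+(1-a_{t})\left(1-\prod_{k=0}^{t-1}(1-a_{k})\right)\\
&=1-\prod_{k=0}^{t}(1-a_{k}),
\end{align*}
where the second line uses the inductive hypothesis. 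This completes the induction and hence the proof.

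There is essentially no serious obstacle here; the only non-routine step is recognizing that the alternating-weighted sum telescopes into $1-\prod_{k=0}^{t}(1-a_{k})$, after which non-negativity of the product closes the argument. One caveat worth double-checking in the write-up is the indexing convention for the empty product when $\ell=t$ (it should equal $1$), which is exactly what makes the base case and the split-off term consistent.
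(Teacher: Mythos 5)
Your proof is correct. You prove the exact identity $\sum_{\ell=0}^{t}a_{\ell}\prod_{k=\ell+1}^{t}(1-a_{k})=1-\prod_{k=0}^{t}(1-a_{k})$ by induction and then drop the non-negative product, whereas the paper argues directly on the expanded sum: it first uses $\theta_{0}\leq 1$ to replace the leading factor $\theta_{0}$ by $1$, then repeatedly merges adjacent terms via $(1-\theta_{k})+\theta_{k}=1$ until everything collapses to $1$, and it handles the case $\theta_{k}=1$ for some $k$ in a separate branch. The two arguments rest on the same telescoping mechanism, but your version has two advantages: the exact identity makes the slack in the bound explicit (the sum equals $1$ minus the surviving product $\prod_{k=0}^{t}(1-a_{k})$, so the inequality is an equality precisely when some $a_{k}=1$), and it removes the paper's case split entirely, since the identity holds verbatim whether or not any $a_{k}$ equals $1$. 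The paper's route, by contrast, gives the bound in one pass without setting up an induction, at the cost of the (arguably unnecessary) case analysis. Your handling of the empty product convention at $\ell=t$ is consistent and is exactly what makes the base case and the split-off term line up.
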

\begin{proof}
For ease of notation, let $\theta_{t}\coloneqq \alpha(t)\eta\in (0,1]$. First, suppose $\theta_{k}\not= 1$ for all $k=1,2,\cdots,t$. In this case, the sum of products is monotone increasing in $\theta_{0}$ and by expanding the sum, we derive
\small \begin{align}
\nonumber
\sum_{\ell=0}^{t}\theta_{\ell}\prod_{k=\ell+1}^{t}(1-\theta_{k})
&= \Big(\theta_{0}(1-\theta_{1})(1-\theta_{2})\cdots(1-\theta_{t})\Big)\\ \nonumber &\hspace{4mm} +\Big(\theta_{1}(1-\theta_{2})\cdots(1-\theta_{t})\Big)+\cdots+\theta_{t}\\ \nonumber
\nonumber
&\stackrel{(\rm{e})}{\leq} \Big((1-\theta_{1})(1-\theta_{2})\cdots(1-\theta_{t})\Big)\\ \nonumber &\hspace{4mm} +\Big(\theta_{1}(1-\theta_{2})\cdots(1-\theta_{t})\Big)+\cdots+\theta_{t}\\ 
\nonumber
&\stackrel{(\rm{f})}{=}\Big((1-\theta_{1}+\theta_{1})(1-\theta_{2})\cdots (1-\theta_{t})\Big)\\ \nonumber &+\Big(\theta_{2}(1-\theta_{3})\cdots(1-\theta_{t})\Big)+\cdots+\theta_{t}\\
\nonumber
&\stackrel{(\rm{g})}{=}\big((1-\theta_{2}+\theta_{2})\cdots (1-\theta_{t})\big)+\cdots+\theta_{t}\\ \nonumber
\hspace{5mm}\vdots \\ \label{Eq:expansion_reduction}
&=(1-\theta_{t})+\theta_{t}=1,
 \end{align}\normalsize
where $(\rm{f})$ follows by combining the first and second parentheses in $(\rm{e})$, $(\rm{g})$ follows by combining the first and second parentheses in $(\rm{f})$, and so on.

Now, consider the case where $\theta_{k}=1$ for some indices $k\in \{1,2,\cdots,t\}$, and let $j$ be the largest index among those indices, \textit{i.e.}, $\theta_{j}=1$ and $\theta_{k}<1$ for all $k\in \{j+1,\cdots,t\}$. In this case, 
\begin{align*}
\sum_{\ell=0}^{t}\theta_{\ell}\prod_{k=\ell+1}^{t}(1-\theta_{k})=\sum_{\ell=j}^{t}\theta_{\ell}\prod_{k=\ell+1}^{t}(1-\theta_{k})= 1,
\end{align*}
where the last equality can be proved using the same approach we used to derive \eqref{Eq:expansion_reduction}. 
\end{proof}

\subsection{Proof of Lemma \eqref{Lemma:aux_post_1}}
\label{App:aux_Proof_of_Lemma}

Similar to the proof of Lemma \ref{Lem:1} in Appendix \ref{Proof of Proposition 1}, we derive a recursive formula for the Lagrangian multipliers. For any $\lambda\in \real_{+}$, we have the following recursion for the Lagrangian multipliers in Algorithm \ref{CHalgorithm},
\small \begin{align}
\label{Eq:BeginFrom}
\|\lambda_{i}(t+1)-\lambda\|^{2}&\leq \left\|\Pi_{\real_{+}}\left(\sum_{j=1}^{n}[W]_{ij}\gamma_{j}(t)\right) -\lambda\right\|^{2}\\ \label{Eq:Continue}
&=\left\|\Pi_{\real_{+}}\left(\sum_{j=1}^{n}[W]_{ij}\gamma_{j}(t)\right) -\Pi_{\real_{+}}(\lambda)\right\|^{2},
\end{align}\normalsize
where the last equality is true since $\Pi_{\real_{+}}(\lambda)=\lambda$ for a vector $\lambda\in \real_{+}$. We continue from eq. \eqref{Eq:Continue} as follows
\begin{align}
\nonumber
&\|\lambda_{i}(t+1)-\lambda\|^{2}\\ \nonumber
&\stackrel{(\rm{a})}{\leq} \left\|\sum_{j=1}^{n}[W]_{ij}\gamma_{j}(t)-\lambda\right\|^{2}\\ \nonumber
&\stackrel{(\rm{b})}{=}\left\|\sum_{j=1}^{n}[W]_{ij}(\lambda_{j}(t)-\lambda+\alpha(t)\nabla_{\lambda}L_{j}(x_{j}(t),\lambda_{j}(t)))\right\|^{2}\\ \nonumber
&\stackrel{(\rm{c})}{\leq}\sum_{j=1}^{n}[W]_{ij}\|\lambda_{j}(t)-\lambda+\alpha(t)\nabla_{\lambda}L_{j}(x_{j}(t),\lambda_{j}(t))\|^{2}\\ \nonumber
&=\sum_{j=1}^{n}[W]_{ij}\Big(\|\lambda_{j}(t)-\lambda\|^{2}\\ \nonumber &\hspace{20mm} +2\alpha(t)\langle\nabla_{\lambda}L_{j}(x_{j}(t),\lambda_{j}(t)),\lambda_{j}(t)-\lambda\rangle\\ \label{Eq:dual} &\hspace{20mm}+\alpha^{2}(t)\|\nabla_{\lambda}L_{j}(x_{j}(t),\lambda_{j}(t))\|^{2}\Big),
 \end{align}\normalsize
where $\rm{(a)}$ follows by the non-expansive property of the projection (cf. \cite[Chapter III.3]{hiriart1996convex}), $\rm{(b)}$ follows by replacing $\gamma_{j}(t)=\lambda_{j}(t)+\alpha(t)\nabla_{\lambda}L_{j}(x_{j}(t),\lambda_{j}(t)))$ from eq. \eqref{Eq:aux_2} in Step 2 of Algorithm \ref{CHalgorithm}, $\rm{(c)}$ follows from the convexity of the squared norm. Now, taking the sum over $i=1,2,\cdots,n$ and using an analysis similar to the proof of Lemma \ref{Lem:1}, from eq. \eqref{Eq:dual} we compute
\small \begin{align}
\label{Eq:dual_ref_point}
&\sum_{t=0}^{T-1}\sum_{i=1}^{n}2\alpha(t)\langle\nabla_{\lambda}L_{i}(x_{i}(t),\lambda_{i}(t)),\lambda-\lambda_{i}(t)\rangle \\ \nonumber &\leq  \sum_{i=1}^{n}\|\lambda_{i}(0)-\lambda\|^{2}+\sum_{t=0}^{T-1}\sum_{i=1}^{n}\alpha^{2}(t)\|\nabla_{\lambda}L_{i}(x_{i}(t),\lambda_{i}(t))\|^{2}.
 \end{align}\normalsize
Since the Lagrangian function $L_{i}(x_{i}(t),\cdot)$ is concave, we can write the following inequality
\begin{align*}
L_{i}(x_{i}(t),\lambda)&-L_{i}(x_{i}(t),\lambda_{i}(t))\\ &\leq \langle\nabla_{\lambda}L_{i}(x_{i}(t),\lambda_{i}(t)),\lambda-\lambda_{i}(t)\rangle.
\end{align*}
Therefore, we rewrite eq. \eqref{Eq:dual_ref_point} using the preceding inequality 
\small \begin{align}
\nonumber
&\sum_{t=0}^{T-1}\sum_{i=1}^{n}2\alpha(t)(L_{i}(x_{i}(t),\lambda)-L_{i}(x_{i}(t),\lambda_{i}(t))) \\ \label{Eq:dual_ref_point_rewrite} &\leq  \sum_{i=1}^{n}\|\lambda\|^{2}+\sum_{t=0}^{T-1}\sum_{i=1}^{n}\alpha^{2}(t)\|\nabla_{\lambda}L_{i}(x_{i}(t),\lambda_{i}(t))\|^{2},
 \end{align}\normalsize
where we also use the fact that $\lambda_{i}(0)=0$ for all $i\in [n]$ (see Algorithm \ref{CHalgorithm}). Since $\lambda \in \real_{+}^{m}$ is an arbitrary vector, we set $\lambda=0$ in eq. \eqref{Eq:dual_ref_point_rewrite},
\small \begin{align}
\nonumber
\sum_{t=0}^{T-1}\sum_{i=1}^{n}&\alpha^{2}(t)\|\nabla_{\lambda}L_{i}(x_{i}(t),\lambda_{i}(t))\|^{2}\\ \nonumber &\geq 
\sum_{t=0}^{T-1}\sum_{i=1}^{n}2\alpha(t)L_{i}(x_{i}(t),0)-L_{i}(x_{i}(t),\lambda_{i}(t))\\  \nonumber
&=-\sum_{t=0}^{T-1}\sum_{i=1}^{n}2\alpha(t)\langle g(x_{i}(t))+\sum_{t=0}^{T-1}\sum_{i=1}^{n}\alpha(t)\eta\|\lambda_{i}(t)\|^{2}, 
 \end{align}\normalsize
where the last inequality follows by expanding the Lagrangian functions. 

\subsection{Proof of Lemma \ref{Lemma:Inequality}}
\label{App:Proof_of_Sum_tau}

\begin{lemma}
\label{Lemma:Inequality}
For a given $\tau\in \mathbb{N}$, and for all $t\geq \tau-1$, the following inequality holds
\begin{align}
\label{Eq:Ineq_Lef_tau}
\sum_{r=t-\tau+1}^{t-1} \sqrt{{t+1\over r+1}}\leq \tau^{3/2}.
\end{align}
\end{lemma}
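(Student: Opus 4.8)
The plan is to bound the sum termwise. For fixed $t$ the summand $\sqrt{(t+1)/(r+1)}$ is decreasing in $r$, so over the index range $r\in\{t-\tau+1,\dots,t-1\}$ it is largest at the left endpoint $r=t-\tau+1$. Hence every term is at most $\sqrt{(t+1)/(t-\tau+2)}$, and the hypothesis $t\geq \tau-1$ ensures $t-\tau+2\geq 1>0$, so this expression is well defined (no division by a nonpositive quantity).

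The key elementary step I would isolate next is the inequality
\[
\dfrac{t+1}{t-\tau+2}\leq \tau,\qquad \text{for all integers } t\geq \tau-1,\ \tau\geq 1.
\]
Clearing the (positive) denominator, this is equivalent to $t+1\leq \tau(t-\tau+2)$, i.e.\ to $0\leq (\tau-1)(t-\tau+1)$, which holds because both factors are nonnegative under the stated hypotheses. Combined with the termwise bound and the fact that the sum has exactly $\tau-1$ terms, this gives
\[
\sum_{r=t-\tau+1}^{t-1}\sqrt{\dfrac{t+1}{r+1}}\ \leq\ (\tau-1)\sqrt{\dfrac{t+1}{t-\tau+2}}\ \leq\ (\tau-1)\sqrt{\tau}\ \leq\ \tau\sqrt{\tau}\ =\ \tau^{3/2},
\]
which is the claim; the degenerate case $\tau=1$ is immediate since the sum is then empty.

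There is no real obstacle here: the only points that need care are the direction of monotonicity of the summand in $r$ and the sharper-than-trivial observation that the worst-case ratio $\tfrac{t+1}{t-\tau+2}$ is itself bounded by $\tau$. Bounding each term crudely by $\sqrt{t+1}$ instead would yield $(\tau-1)\sqrt{t+1}$, which is useless once $t\gg\tau$, so capturing the $r$-dependence at the endpoint is exactly what makes the estimate work.
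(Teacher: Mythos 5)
Your proof is correct, and it takes a genuinely more elementary route than the paper's. The paper reindexes the sum as $\sum_{m=0}^{\tau-2}(m+t-\tau+2)^{-1/2}$, compares it to the integral $\int_0^{\tau-2}(x+t-\tau+2)^{-1/2}\,\D x = 2(\sqrt{t}-\sqrt{t-\tau+2})$, and then invokes the Bernoulli-type inequality $(1+x)^{r}\leq 1+rx$ to reduce that to $(\tau-2)/\sqrt{t-\tau+2}$, arriving at the estimate $(\tau-1)/\sqrt{t-\tau+2}$ for the sum before multiplying through by $\sqrt{t+1}$; the final step $\sqrt{(t+1)/(t-\tau+2)}\leq\sqrt{\tau}$ is justified there only by an appeal to monotonicity in $t$. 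You bypass the integral comparison entirely: since the summand is decreasing in $r$, each of the $\tau-1$ terms is at most the first one, which gives the same intermediate bound $(\tau-1)\sqrt{(t+1)/(t-\tau+2)}$ in one line — so the paper's integral machinery buys nothing quantitatively here. Your closing step is also slightly more careful than the paper's: rather than asserting monotonicity, you verify $\frac{t+1}{t-\tau+2}\leq\tau$ by the factorization $\tau(t-\tau+2)-(t+1)=(\tau-1)(t-\tau+1)\geq 0$, which is exactly the hypothesis $t\geq\tau-1$ together with $\tau\geq 1$. Your handling of the empty sum when $\tau=1$ is a nice touch the paper omits. In short: same conclusion, same constants (yours is marginally sharper, $(\tau-1)\sqrt{\tau}$ versus $\tau\sqrt{\tau}$), but a shorter and more self-contained argument.
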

\begin{proof}
Consider the change of variable $r=m+(t-\tau+1)$ for the following sum
\begin{align}
\nonumber
&\sum_{r=t-\tau+1}^{t-1} \sqrt{{1\over r+1}}=\sum_{m=0}^{\tau-2} {1\over \sqrt{m+(t-\tau+2)}}
\\ \nonumber &\stackrel{\rm{(a)}}{\leq} \dfrac{1}{\sqrt{t-\tau+2}}+\int_{0}^{\tau-2}\dfrac{\D x}{\sqrt{x+(t-\tau+2)}} \\ \
\label{Eq:Riemann_solution}
&=\dfrac{1}{\sqrt{t-\tau+2}}+2(\sqrt{t}-\sqrt{t-\tau+2}).
\end{align}
where ${\rm{(a)}}$ follows by the fact that the Riemann sum can be bounded from above by the integral. The second term in the preceding equality can be bounded as follows
\begin{align}
\nonumber
&2(\sqrt{t}-\sqrt{t-\tau+2})\\ \nonumber &= 2\Big(\sqrt{t-\tau+2}\cdot\sqrt{1+{\tau-2\over t-\tau+2} } -\sqrt{t-\tau+2}\Big)\\ \nonumber
&\stackrel{\rm{(b)}}{\leq} 
2\Big(\sqrt{t-\tau+2}\cdot\left(1+{1\over 2}{\tau-2\over t-\tau+2}\right) -\sqrt{t-\tau+2}\Big)\\ \label{Eq:PrecedingInequaltiy01}
&\leq \dfrac{\tau-2}{\sqrt{t-\tau+2}},
\end{align}
where $\rm{(b)}$ follows from the inequality $(1+x)^{r}\leq 1+rx$ for all $r\in [0,1]$ and $x\geq -1$. Using the upper bound \eqref{Eq:PrecedingInequaltiy01} in eq. \eqref{Eq:Riemann_solution} gives us
\begin{align*}
\sum_{r=t-\tau+1}^{t-1} \sqrt{{1\over r+1}}&\leq \dfrac{\tau-1}{\sqrt{t-\tau+2}}\\
&\leq \dfrac{\tau}{\sqrt{t-\tau+2}}.
\end{align*}
We multiply the preceding inequality by $\sqrt{t+1}$, 
\begin{align*}
\sum_{r=t-\tau+1}^{t-1} \sqrt{{t+1\over r+1}}&\leq \tau \sqrt{\dfrac{t+1}{t-\tau+2}}\\
&\stackrel{\rm{(c)}}{\leq} \tau \sqrt{\tau}.
\end{align*}
where ${\rm(c)}$ follows by the fact that $\sqrt{{t+1\over t-\tau+2}}$ is monotone decreasing in $t$, and thus it is maximized by $t=\tau-1$.
\end{proof}

\subsection{Proof of Lemma \ref{Lemma:03}}
\label{App:Proof_of_Lemma_03}

\begin{lemma}
Recall the definition of the cumulative cost function $f(x)\coloneqq {1\over n}\sum_{i=1}^{n} f_{i}(x)$ and suppose the optimal solution of the minimization problem in \eqref{Eq:empirical_risk_form}-\eqref{Eq:empirical_risk_form_1} satisfies the inequality constraints strictly. That is, $g(x_{\ast})\prec 0$. Then,
\begin{align}
\label{Eq:Main_Inequality}
f(x)-f(x_{\ast})\geq 0, 
\end{align}
for all $x\in \ball_{d}(R)$.
\end{lemma}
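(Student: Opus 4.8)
The plan is to show that strict feasibility promotes $x_*$ from a \emph{constrained} minimizer of $f$ over $\mathcal{X}$ to an \emph{unconstrained} minimizer over $\ball_d(R)$, after which \eqref{Eq:Main_Inequality} is immediate from convexity.

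The first step is to observe that $g(x_*)\prec 0$ forces $x_*$ into the interior of $\mathcal{X}$. By Assumption \ref{Assumption:Lipschitz Functions} each $g_k$ is convex and finite on $\ball_d(R)$, hence continuous there; since $g_k(x_*)<0$ for all $k\in[m]$, there is a radius $\rho>0$ with $g_k(x)<0$ for every $x$ satisfying $\|x-x_*\|\le\rho$, so that $\{x:\|x-x_*\|\le\rho\}\subseteq\mathcal{X}\subseteq\ball_d(R)$. In particular $x_*\in\mathrm{int}(\mathcal{X})$.

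The second step combines this with the optimality of $x_*$. Fix any $x\in\ball_d(R)$, put $d\coloneqq x-x_*$ (the case $d=0$ being trivial), and consider $\phi(t)\coloneqq\big(f(x_*+td)-f(x_*)\big)/t$ for $t\in(0,1]$; this is well defined since $x_*+td\in\ball_d(R)$ along the whole segment. For all sufficiently small $t>0$ the point $x_*+td$ lies in the $\rho$-ball around $x_*$, hence in $\mathcal{X}$, so optimality of $x_*$ over $\mathcal{X}$ gives $f(x_*+td)\ge f(x_*)$, i.e. $\phi(t)\ge 0$ for such $t$. Since $f$ is convex on $\ball_d(R)$, the difference quotient $\phi$ is nondecreasing on $(0,1]$, whence $f(x)-f(x_*)=\phi(1)\ge\phi(t)\ge 0$ for any admissibly small $t$, which is exactly \eqref{Eq:Main_Inequality}. (Equivalently: $x_*$ is a local minimizer of the convex function $f$, so $0\in\partial f(x_*)$, and the subgradient inequality in the definition of $\partial f$ yields $f(x)\ge f(x_*)$ for all $x$ in the domain.)

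The argument is short, and the only place requiring care is the first step: one must use continuity of the constraint functions -- itself a consequence of finiteness plus convexity on $\ball_d(R)$ -- to convert the pointwise strict inequalities $g_k(x_*)<0$ into a genuine open neighborhood of $x_*$ contained in $\mathcal{X}$. Once $x_*$ is known to be interior to $\mathcal{X}$, everything else is a one-line consequence of convexity, so there is no real technical obstacle.
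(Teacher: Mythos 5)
Your proof is correct, but it reaches the conclusion by a genuinely different and more elementary route than the paper. The paper invokes the KKT conditions at $(x_{\ast},\lambda_{\ast})$: complementary slackness together with $g_{k}(x_{\ast})<0$ forces $\lambda_{\ast,k}=0$ for every $k$, so the stationarity condition collapses to $\langle \xi, x-x_{\ast}\rangle\geq 0$ for $\xi\in\partial f(x_{\ast})$, which combined with the subgradient inequality $f(x)-f(x_{\ast})\geq\langle\xi,x-x_{\ast}\rangle$ gives the claim. You instead bypass the dual variables entirely: strict feasibility plus continuity of the $g_{k}$ puts an open neighborhood of $x_{\ast}$ inside $\mathcal{X}$, so $x_{\ast}$ is a local minimizer of the convex function $f$ on $\ball_{d}(R)$ and hence a global one there, via the monotone difference quotient. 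Your argument is self-contained and does not lean on the KKT system being valid in the stated form (the paper's condition [C1] is asserted for \emph{all} elements of $\partial f(x_{\ast})+\sum_{k}\lambda_{\ast,k}\partial g_{k}(x_{\ast})$ and for $x$ ranging over $\ball_{d}(R)$ rather than $\mathcal{X}$, which is stronger than the textbook statement); the paper's argument is shorter once KKT is granted. Two small points of hygiene: continuity of $g_{k}$ on the closed ball does not follow from convexity plus finiteness alone at boundary points, but it does follow either from the $g_{k}$ being convex and real-valued on all of $\real^{d}$ (as in eq. \eqref{Eq:emperical_22}) or from the Lipschitz bound in Assumption \ref{Assumption:Lipschitz Functions}, so you should cite one of those; and you reuse $d$ for the direction $x-x_{\ast}$ while $d$ already denotes the ambient dimension.
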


\begin{proof}
Since $f_{i}(\cdot)$ is convex on the Euclidean ball $\ball_{d}(R)$, so is $f(\cdot)$. Therefore, 
\small \begin{align}
\label{Eq:KKTargument}
f(x)-f(x_{\ast})\geq \langle \xi,x-x_{\ast}\rangle,  
 \end{align}\normalsize
for all $\xi\in \partial f(x_{\ast}),x\in \ball_{d}(R)$. We now write the Karush-Kuhn-Tucker (KKT) conditions \cite{boyd2004convex} for the optimal solution $x_{\ast}$ and the vector of optimal Lagrangian  multipliers $\lambda_{\ast}\coloneqq(\lambda_{\ast,1},\lambda_{\ast,2},\cdots,\lambda_{\ast,m})$,
\begin{itemize}[leftmargin=*]
\item[] [C1] $\langle \xi ,x-x_{\ast}\rangle \geq 0,\quad \forall \xi\in \partial f(x_{\ast})+\sum_{k=1}^{m}\lambda_{*,k}\cdot \partial g_{k}(x_{\ast})$,
\item[] [C2] $\lambda_{\ast,k}\cdot g_{k}(x_{\ast})=0, \quad k=1,2,\cdots,m$, 
\item[] [C3] $g(x_{\ast})\preceq 0$ and $\lambda_{\ast}\succeq 0$.
\end{itemize}
From [C2] we note that $\lambda_{\ast,k}=0$ since $g_{k}(x_{\ast})< 0$ for $k=1,2,\cdots,m$. Consequently, the condition in [C1] turns into
\begin{align*}
\langle \xi ,x-x_{\ast}\rangle \geq 0,\quad \quad \text{for all} \ \xi\in \partial f(x_{\ast}),x\in \ball_{d}(R).
\end{align*} 
\end{proof}
\normalsize

\bibliographystyle{ieeetran}
\bibliography{mybib1}

\begin{filecontents}{mybib1.bib}

@article{mahdavi2012trading,
  title={Trading regret for efficiency: online convex optimization with long term constraints},
  author={Mahdavi, Mehrdad and Jin, Rong and Yang, Tianbao},
  journal={The Journal of Machine Learning Research},
  volume={13},
  number={1},
  pages={2503--2528},
  year={2012},
  publisher={JMLR. org}
}

@inproceedings{duchi2008efficient,
  title={Efficient projections onto the l 1-ball for learning in high dimensions},
  author={Duchi, John and Shalev-Shwartz, Shai and Singer, Yoram and Chandra, Tushar},
  booktitle={Proceedings of the 25th international conference on Machine learning},
  pages={272--279},
  year={2008},
  organization={ACM}
}
@inproceedings{johansson2008sub-gradient,
  title={sub-gradient methods and consensus algorithms for solving convex optimization problems},
  author={Johansson, Bj{\"o}rn and Keviczky, Tam{\'a}s and Johansson, Mikael and Johansson, Karl Henrik},
  booktitle={Decision and Control, 2008. CDC 2008. 47th IEEE Conference on},
  pages={4185--4190},
  year={2008},
  organization={IEEE}
}

@article{yuan2011distributed,
  title={Distributed primal--dual sub-gradient method for multiagent optimization via consensus algorithms},
  author={Yuan, Deming and Xu, Shengyuan and Zhao, Huanyu},
  journal={IEEE Transactions on Systems, Man, and Cybernetics, Part {B}: Cybernetics},
  volume={41},
  number={6},
  pages={1715--1724},
  year={2011},
  publisher={IEEE}
}

@article{duchi2012dual,
  title={Dual averaging for distributed optimization: convergence analysis and network scaling},
  author={Duchi, John C and Agarwal, Alekh and Wainwright, Martin J},
  journal={IEEE Transactions on Automatic control},
  volume={57},
  number={3},
  pages={592--606},
  year={2012},
  publisher={IEEE}
}

@article{bubeck2014theory,
  title={Theory of convex optimization for machine learning},
  author={Bubeck, S{\'e}bastien},
  journal={\textsf{arXiv:1405.4980}},
  year={2014}
}

@book{horn2012matrix,
  title={Matrix analysis},
  author={Horn, Roger A and Johnson, Charles R},
  year={2012},
  publisher={Cambridge university press}
}

@article{yuan2013convergence,
  title={On the convergence of decentralized gradient descent},
  author={Yuan, Kun and Ling, Qing and Yin, Wotao},
  journal={\textsf{arXiv:1310.7063}},
  year={2013}
}

@inproceedings{johansson2008sub-gradient,
  title={sub-gradient methods and consensus algorithms for solving convex optimization problems},
  author={Johansson, Bj{\"o}rn and Keviczky, Tam{\'a}s and Johansson, Mikael and Johansson, Karl Henrik},
  booktitle={Decision and Control, 2008. CDC 2008. 47th IEEE Conference on},
  pages={4185--4190},
  year={2008},
  organization={IEEE}
}
@article{nedic2009sub-gradient,
  title={sub-gradient methods for saddle-point problems},
  author={Nedi{\'c}, Angelia and Ozdaglar, Asuman},
  journal={Journal of optimization theory and applications},
  volume={142},
  number={1},
  pages={205--228},
  year={2009},
  publisher={Springer}
}

@article{williamson2002primal,
  title={The primal-dual method for approximation algorithms},
  author={Williamson, David P},
  journal={Mathematical Programming},
  volume={91},
  number={3},
  pages={447--478},
  year={2002},
  publisher={Springer}
}

@article{cortes1995support,
  title={Support-vector networks},
  author={Cortes, Corinna and Vapnik, Vladimir},
  journal={Machine learning},
  volume={20},
  number={3},
  pages={273--297},
  year={1995},
  publisher={Springer}
}
@article{jadbabaie2003coordination,
  title={Coordination of groups of mobile autonomous agents using nearest neighbor rules},
  author={Jadbabaie, Ali and Lin, Jie and Morse, A Stephen},
  journal={IEEE Transactions on Automatic Control},
  volume={48},
  number={6},
  pages={988--1001},
  year={2003},
  publisher={IEEE}
}

@article{stipanovic2004decentralized,
  title={Decentralized overlapping control of a formation of unmanned aerial vehicles},
  author={Stipanovi{\'c}, Du{\v{s}}An M and Inalhan, G{\"o}Khan and Teo, Rodney and Tomlin, Claire J},
  journal={Automatica},
  volume={40},
  number={8},
  pages={1285--1296},
  year={2004},
  publisher={Elsevier}
}
@article{fax2004information,
  title={Information flow and cooperative control of vehicle formations},
  author={Fax, J Alexander and Murray, Richard M},
  journal={IEEE Transactions on Automatic Control},
  volume={49},
  number={9},
  pages={1465--1476},
  year={2004},
  publisher={IEEE}
}

@article{bakirtzis2003decentralized,
  title={A decentralized solution to the {DC-OPF} of interconnected power systems},
  author={Bakirtzis, Anastasios G and Biskas, Pandelis N},
  journal={IEEE Transactions on Power Systems},
  volume={18},
  number={3},
  pages={1007--1013},
  year={2003},
  publisher={IEEE}
}

@article{stern1977class,
  title={A class of decentralized routing algorithms using relaxation},
  author={Stern, Thomas E},
  journal={IEEE Transactions on Communications},
  volume={25},
  number={10},
  pages={1092--1102},
  year={1977},
  publisher={IEEE}
}

@article{ogren2004cooperative,
  title={Cooperative control of mobile sensor networks: Adaptive gradient climbing in a distributed environment},
  author={{\"O}gren, Petter and Fiorelli, Edward and Leonard, Naomi Ehrich},
  journal={IEEE Transactions on Automatic Control},
  volume={49},
  number={8},
  pages={1292--1302},
  year={2004},
  publisher={IEEE}
}
@article{olfati2007consensus,
  title={Consensus and cooperation in networked multi-agent systems},
  author={Olfati-Saber, Reza and Fax, Alex and Murray, Richard M},
  journal={Proceedings of the IEEE},
  volume={95},
  number={1},
  pages={215--233},
  year={2007},
  publisher={IEEE}
}

@article{xiao2007distributed,
  title={Distributed average consensus with least-mean-square deviation},
  author={Xiao, Lin and Boyd, Stephen and Kim, Seung-Jean},
  journal={Journal of Parallel and Distributed Computing},
  volume={67},
  number={1},
  pages={33--46},
  year={2007},
  publisher={Elsevier}
}

@techreport{tsitsiklis1984problems,
  title={Problems in Decentralized Decision making and Computation.},
  author={Tsitsiklis, John Nikolas},
  year={1984},
  institution={DTIC Document}
}

@book{bertsekas1989parallel,
  title={Parallel and distributed computation: numerical methods},
  author={Bertsekas, Dimitri P and Tsitsiklis, John N},
  volume={23},
  year={1989},
  publisher={Prentice hall Englewood Cliffs, NJ}
}

@article{nedic2009distributed,
  title={Distributed sub-gradient methods for multi-agent optimization},
  author={Nedi{\'c}, Angelia and Ozdaglar, Asuman},
  journal={IEEE Transactions on Automatic Control},
  volume={54},
  number={1},
  pages={48--61},
  year={2009},
  publisher={IEEE}
}
@article{olshevsky2014linear,
  title={Linear time average consensus on fixed graphs and implications for decentralized optimization and multi-agent control},
  author={Olshevsky, Alex},
  journal={\textsf{arXiv:1411.4186}},
  year={2014}
}

@article{jakovetic2014fast,
  title={Fast distributed gradient methods},
  author={Jakovetic, Dusan and Xavier, Joao and Moura, Jose MF},
  journal={IEEE Transactions on Automatic Control},
  volume={59},
  number={5},
  pages={1131--1146},
  year={2014},
  publisher={IEEE}
}

@article{li2014decoupling,
  title={Decoupling coupled constraints through utility design},
  author={Li, Na and Marden, Jason R},
  journal={IEEE Transactions on Automatic Control},
  volume={59},
  number={8},
  pages={2289--2294},
  year={2014},
  publisher={IEEE}
}
@incollection{srivastava2013distributed,
  title={Distributed Bregman-distance algorithms for min-max optimization},
  author={Srivastava, Kunal and Nedi{\'c}, Angelia and Stipanovi{\'c}, Du{\v{s}}an},
  booktitle={Agent-Based Optimization},
  pages={143--174},
  year={2013},
  publisher={Springer}
}

@article{yuan2011distributed,
  title={Distributed primal--dual sub-gradient method for multiagent optimization via consensus algorithms},
  author={Yuan, Deming and Xu, Shengyuan and Zhao, Huanyu},
  journal={IEEE Transactions on Systems, Man, and Cybernetics, Part B: Cybernetics},
  volume={41},
  number={6},
  pages={1715--1724},
  year={2011},
  publisher={IEEE}
}

@article{mahdavi2012trading,
  title={Trading regret for efficiency: online convex optimization with long term constraints},
  author={Mahdavi, Mehrdad and Jin, Rong and Yang, Tianbao},
  journal={The Journal of Machine Learning Research},
  volume={13},
  number={1},
  pages={2503--2528},
  year={2012},
  publisher={JMLR. org}
}
@article{wang2013constraint,
  title={On constraint qualifications: motivation, design and inter-relations},
  author={Wang, Ziteng and Fang, Shu-Cherng and Xing, Wenxun},
  journal={MANAGEMENT},
  volume={9},
  number={4},
  pages={983--1001},
  year={2013}
}
@misc{hiriart1996convex,
  title={Convex analysis and minimization algorithms, Part I: Fundamentals, vol. 305 of Grundlehren der mathematischen Wissenschaften},
  author={Hiriart-Urruty, JB and Lemar{\'e}chal, C},
  year={1996},
  publisher={Springer-Verlag, Berlin, Heidelberg, NY,}
}
@article{chung2006concentration,
  title={Concentration inequalities and martingale inequalities: a survey},
  author={Chung, Fan and Lu, Linyuan},
  journal={Internet Mathematics},
  volume={3},
  number={1},
  pages={79--127},
  year={2006},
  publisher={Taylor \& Francis}
}
@article{nesterov2012efficiency,
  title={Efficiency of coordinate descent methods on huge-scale optimization problems},
  author={Nesterov, Yu},
  journal={SIAM Journal on Optimization},
  volume={22},
  number={2},
  pages={341--362},
  year={2012},
  publisher={SIAM}
}
@book{bollobas1998random,
  title={Random graphs},
  author={Bollob{\'a}s, B{\'e}la},
  year={1998},
  publisher={Springer}
}

@article{chung2006concentration,
  title={Concentration inequalities and martingale inequalities: a survey},
  author={Chung, Fan and Lu, Linyuan},
  journal={Internet Mathematics},
  volume={3},
  number={1},
  pages={79--127},
  year={2006},
  publisher={Taylor \& Francis}
}

@article{nedic2001incremental,
  title={Incremental sub-gradient methods for nondifferentiable optimization},
  author={Nedic, Angelia and Bertsekas, Dimitri P},
  journal={SIAM Journal on Optimization},
  volume={12},
  number={1},
  pages={109--138},
  year={2001},
  publisher={SIAM}
}
@article{bertsekas2011incremental,
  title={Incremental gradient, sub-gradient, and proximal methods for convex optimization: A survey},
  author={Bertsekas, Dimitri P},
  journal={Optimization for Machine Learning},
  volume={2010},
  pages={1--38},
  publisher={MIT Press}
}

@article{watts1998collective,
  title={Collective dynamics of ‘small-world’networks},
  author={Watts, Duncan J and Strogatz, Steven H},
  journal={nature},
  volume={393},
  number={6684},
  pages={440--442},
  year={1998},
  publisher={Nature Publishing Group}
}
@book{boyd2004convex,
  title={Convex optimization},
  author={Boyd, Stephen and Vandenberghe, Lieven},
  year={2004},
  publisher={Cambridge university press}
}

@misc{grant2008cvx,
  title={{CVX}: {MATLAB} software for disciplined convex programming},
  author={Grant, Michael and Boyd, Stephen}
}

@inproceedings{boyd2005gossip,
  title={Gossip algorithms: Design, analysis and applications},
  author={Boyd, Stephen and Ghosh, Arpita and Prabhakar, Balaji and Shah, Devavrat},
  booktitle={INFOCOM 2005. 24th Annual Joint Conference of the IEEE Computer and Communications Societies. Proceedings IEEE},
  volume={3},
  pages={1653--1664},
  year={2005},
  organization={IEEE}
}
@inproceedings{wan2009event,
  title={Event-triggered distributed optimization in sensor networks},
  author={Wan, Pu and Lemmon, Michael D},
  booktitle={Information Processing in Sensor Networks, 2009. IPSN 2009. International Conference on},
  pages={49--60},
  year={2009},
  organization={IEEE}
}

@article{zhu2012distributed,
  title={On distributed convex optimization under inequality and equality constraints},
  author={Zhu, Minghui and Mart{\'\i}nez, Sonia},
  journal={IEEE Transactions on Automatic Control},
  volume={57},
  number={1},
  pages={151--164},
  year={2012},
  publisher={IEEE}
}
@article{lee2013distributed,
  title={Distributed random projection algorithm for convex optimization},
  author={Lee, Soomin and Nedic, Angelia},
  journal={IEEE Journal of Selected Topics in Signal Processing},
  volume={7},
  number={2},
  pages={221--229},
  year={2013},
  publisher={IEEE}
}
@article{ram2010distributed,
  title={Distributed stochastic sub-gradient projection algorithms for convex optimization},
  author={Ram, S Sundhar and Nedi{\'c}, A and Veeravalli, Venugopal V},
  journal={Journal of optimization theory and applications},
  volume={147},
  number={3},
  pages={516--545},
  year={2010},
  publisher={Springer}
}
@article{chang2014distributed,
  title={Distributed constrained optimization by consensus-based primal-dual perturbation method},
  author={Chang, Tsung-Hui and Nedi{\'c}, Angelia and Scaglione, Anna},
  journal={IEEE Transactions on Automatic Control},
  volume={59},
  number={6},
  pages={1524--1538},
  year={2014},
  publisher={IEEE}
}

@incollection{henrion2012projection,
  title={Projection methods in conic optimization},
  author={Henrion, Didier and Malick, J{\'e}r{\^o}me},
  booktitle={Handbook on Semidefinite, Conic and Polynomial Optimization},
  pages={565--600},
  year={2012},
  publisher={Springer}
}

@book{boyd2004convex,
  title={Convex optimization},
  author={Boyd, Stephen and Vandenberghe, Lieven},
  year={2004},
  publisher={Cambridge university press}
}
@article{robbins1951stochastic,
  title={A stochastic approximation method},
  author={Robbins, Herbert and Monro, Sutton},
  journal={The annals of mathematical statistics},
  pages={400--407},
  year={1951},
  publisher={JSTOR}
}

@inproceedings{agarwal2011distributed,
  title={Distributed delayed stochastic optimization},
  author={Agarwal, Alekh and Duchi, John C},
  booktitle={Advances in Neural Information Processing Systems},
  pages={873--881},
  year={2011}
}
@article{duchi2015asynchronous,
  title={Asynchronous stochastic convex optimization},
  author={Duchi, John C and Chaturapruek, Sorathan and R{\'e}, Christopher},
  journal={arXiv preprint arXiv:1508.00882},
  year={2015}
}

@article{duchi2015asynchronous,
  title={Asynchronous stochastic convex optimization},
  author={Duchi, John C and Chaturapruek, Sorathan and R{\'e}, Christopher},
  journal={arXiv preprint arXiv:1508.00882},
  year={2015}
}
@article{hazan2010optimal,
  title={An optimal algorithm for stochastic strongly-convex optimization},
  author={Hazan, Elad and Kale, Satyen},
  journal={arXiv preprint arXiv:1006.2425},
  year={2010}
}
@article{ghadimi2012optimal,
  title={Optimal stochastic approximation algorithms for strongly convex stochastic composite optimization I: A generic algorithmic framework},
  author={Ghadimi, Saeed and Lan, Guanghui},
  journal={SIAM Journal on Optimization},
  volume={22},
  number={4},
  pages={1469--1492},
  year={2012},
  publisher={SIAM}
}

@inproceedings{pollard1990empirical,
  title={Empirical processes: theory and applications},
  author={Pollard, David},
  booktitle={NSF-CBMS regional conference series in probability and statistics},
  pages={i--86},
  year={1990},
  organization={JSTOR}
}
@book{vapnik1998statistical,
  title={Statistical learning theory},
  author={Vapnik, Vladimir Naumovich and Vapnik, Vlamimir},
  volume={1},
  year={1998},
  publisher={Wiley New York}
}

@article{haussler1992decision,
  title={Decision theoretic generalizations of the PAC model for neural net and other learning applications},
  author={Haussler, David},
  journal={Information and computation},
  volume={100},
  number={1},
  pages={78--150},
  year={1992},
  publisher={Elsevier}
}
@inproceedings{agarwal2009information,
  title={Information-theoretic lower bounds on the oracle complexity of convex optimization},
  author={Agarwal, Alekh and Wainwright, Martin J and Bartlett, Peter L and Ravikumar, Pradeep K},
  booktitle={Advances in Neural Information Processing Systems},
  pages={1--9},
  year={2009}
}

@article{yuan2016regularized,
  title={Regularized Primal-Dual sub-gradient Method for Distributed Constrained Optimization},
  author={Yuan, Deming and Ho, Daniel WC and Xu, Shengyuan},
  journal={IEEE transactions on cybernetics},
  volume={46},
  number={9},
  pages={2109--2118},
  year={2016},
  publisher={IEEE}
}

@inproceedings{mahdavi2012stochastic,
  title={Stochastic gradient descent with only one projection},
  author={Mahdavi, Mehrdad and Yang, Tianbao and Jin, Rong and Zhu, Shenghuo and Yi, Jinfeng},
  booktitle={Advances in Neural Information Processing Systems},
  pages={494--502},
  year={2012}
}

@article{koshal2011multiuser,
  title={Multiuser optimization: distributed algorithms and error analysis},
  author={Koshal, Jayash and Nedi{\'c}, Angelia and Shanbhag, Uday V},
  journal={SIAM Journal on Optimization},
  volume={21},
  number={3},
  pages={1046--1081},
  year={2011},
  publisher={SIAM}
}
@book{bertsekas1989parallel,
  title={Parallel and distributed computation: numerical methods},
  author={Bertsekas, Dimitri P and Tsitsiklis, John N},
  volume={23},
  year={1989},
  publisher={Prentice hall Englewood Cliffs, NJ}
}

\end{filecontents}

\end{document}